\newcolumntype{L}{>{\phantom{$-$}}l}       
\newcommand\mcL[1]{\multicolumn{1}{L}{#1}} %
\DeclareFontFamily{U}{matha}{\hyphenchar\font45}
\DeclareFontShape{U}{matha}{m}{n}{
      <5> <6> <7> <8> <9> <10> gen * matha
      <10.95> matha10 <12> <14.4> <17.28> <20.74> <24.88> matha12
      }{}
\DeclareSymbolFont{matha}{U}{matha}{m}{n}
\DeclareMathSymbol{\odiv}         {2}{matha}{"63}
\newcommand{\asto}{\longrightarrow} 
\newcommand{\probto}{\longrightarrow} 
\newcommand{\lawto}{\longrightarrow} 
\newcommand{\prob}{\ensuremath{\mathbb{P}}} %
\newcommand{\expect}{\ensuremath{\mathbb{E}}} %
\newcommand{\graphset}{\mathcal{G}}
\newcommand{\vertexset}{\mathcal{V}}
\newcommand{\edgeset}{\mathcal{E}}
\newcommand{\neighbors}{\mathcal{N}}
\newcommand{\infected}{\mathcal{I}}
\newcommand{\vect}[1]{\bm{#1}}
\newcommand{\diag}{\text{diag}}
\newcommand{\textout}{\text{out}}
\newcommand{\history}{\field}
\newcommand{\hessian}{H}
\newcommand{\rank}{\ensuremath{\operatorname{rank}}}
\newcommand{\cov}{\ensuremath{\operatorname{cov}}}
\newcommand{\field}{\mathcal{F}}
\newcommand{\normal}{\mathscr{N}}
\newcommand{\unordered}{\mathcal{U}}
\newcommand{\ordered}{\mathcal{O}}
\newcommand{\proj}{\text{proj}}
\newcommand{\E}{\expect} 
\newcommand{\real}{\ensuremath{\mathbb{R}}}
\newcommand{\var}{\ensuremath{\operatorname{var}}} 
\newcommand{\defeq}{=}
\newcommand{\trans}{T}
\newcommand{\vbeta}{\vect{\beta}}
\newcommand{\sbeta}{\beta}
\newcommand{\reals}{\mathbb{R}}
\newcommand{\ra}[1]{\renewcommand{\arraystretch}{#1}}
\newcommand{\intext}{\text{ in }}
\newcommand{\citep}[1]{\cite{#1}}
\newcommand{\citet}[1]{\cite{#1}}
\newcommand{\citealp}[1]{\cite{#1}}
\newtheorem{theorem}{Theorem}
\newtheorem{lemma}[theorem]{Lemma}
\newtheorem{proposition}[theorem]{Proposition}
\newtheorem{corollary}[theorem]{Corollary}
\newtheorem{definition}[theorem]{Definition}
\newtheorem{remark}[theorem]{Remark}
\newtheorem{theorem*}[theorem]{Theorem}   
\newtheorem{lemma*}{Lemma} 
\newtheorem{corollary*}{Corollary} 
\newtheorem{remark*}{Remark}
\newtheorem{example*}{Example}
\newtheorem{definition*}{Definition}
\newtheorem{proposition*}{Proposition}
\title{A theory of maximum likelihood for weighted infection graphs}
\author{Justin Khim\thanks{Department of Statistics, University of Pennsylvania, Philadelphia, PA 19104.}
\and 
Po-Ling Loh\thanks{Department of Electrical \& Computer Engineering and Statistics,
       University of Wisconsin,
       Madison, WI 53706.}}
\begin{document}

\maketitle

\begin{abstract}
We study the problem of parameter estimation based on infection data from an epidemic outbreak on a graph. We assume that successive infections occur via contagion; i.e., transmissions can only spread across existing directed edges in the graph. Our stochastic spreading model allows individual nodes to be infected more than once, and the probability of the transmission spreading across a particular edge is proportional to both the cumulative number of times the source nodes has been infected in previous stages of the epidemic and the weight parameter of the edge. We propose a maximum likelihood estimator for inferring the unknown edge weights when full information is available concerning the order and identity of successive edge transmissions. When the weights take a particular form as exponential functions of a linear combination of known edge covariates, we show that maximum likelihood estimation amounts to optimizing a convex function, and produces a solution that is both consistent and asymptotically normal. Our proofs are based on martingale convergence theorems and the theory of weighted P\'{o}lya urns. We also show how our theory may be generalized to settings where the weights are not exponential. Finally, we analyze the case where the available infection data comes in the form of an unordered set of edge transmissions. We propose two algorithms for weight parameter estimation in this setting and derive corresponding theoretical guarantees. Our methods are validated using both synthetic data and real-world data from the Ebola spread in West Africa.
\end{abstract}


\section{Introduction}
\label{secIntro}

Information, behavior, and diseases often spread over the edges of an underlying network~\citep{AndMay92, aral2011, christakis2007, Jac08}. On Twitter or Facebook, users may share information with their friends and followers~\citep{KatEtal11, LesEtal07}; in communities or countries, individuals infected with HIV or Ebola may spread the disease to other people and other regions by direct physical contact~\citep{HelKoh07, Kis14, NeaEtal94, RicEtal15}. As more data become available and researchers are able to trace the connections between individuals and recover information about specific transmission events, several novel statistical questions have emerged concerning estimation of unknown parameters governing the stochastic spread. Some scientific questions of interest include the following: Which individuals are at the greatest risk of being infected by HIV in a relationship network? What treatments are effective at slowing the spread of disease from one region or person to another~\citep{drakopoulos2016, hoffmann2017}? Applying the same probabilistic models to an entirely different setting may allow researchers in online marketing and advertising to devise an optimal advertising budget for product proliferation in online social networks~\citep{DomRic01, kempe2003, Sco15}. Although a sizable amount of literature exists for answering these questions based on a particular network and stochastic spreading model~\citep{CheEtal09, SahEtal15, SomEtal14}, relatively little existing work addresses the problem of estimating a particular stochastic model based on infection transmission data.

In the statistics literature, most research in network science has focused on theory and applications of graphical model estimation~\citep{Lau96, WaiJor08}. However, the equally important goal of performing further estimation or inference procedures based on a known (or estimated) graph structure has been largely unaddressed, with an exception being graph partitioning approaches such as stochastic block modeling~\citep{Abb17, WolOlh13, ZhaZho16}. Another novel aspect of our work is that we do not assume our data are collected in an independent, identically distributed manner, as might be the case if one were observing multiple diseases spread on the same graph. In contrast, our inference procedures are based on information collected about a single epidemic outbreak, which is more realistic in various applications. Indeed, although different diseases may spread over the same network, the spreading behavior of each disease may be quite different, or the disease may exhibit different characteristics during successive spreads; hence, the goal is simply to model the propagation of the disease under consideration. Notably, although epidemiology has traditionally been an active application area in statistics~\citep{Jew03, KypMin18, RotEtal08}, relatively few methods exist in previous literature that take into account network structure within a population of interest.

Outside of statistics, several attempts have been made to estimate stochastic models for the spread of information and disease, but they are far from comprehensive. In one line of work, each vertex may only be infected once, and time information providing the order in which individuals were infected is unavailable. This model, applicable to settings such as HIV, is considered in \cite{milling2015}, as well as in our own previous work~\citep{khim2017}. Another variation relies on observing multiple independent, identically distributed cascades, where a cascade is an infection in which each vertex may be infected only once~\citep{gomez2016, netrapalli2012}. While the notion of cascades was likely inspired by applications in social networks, obtaining accurate data from actual cascades is often much more complicated~\citep{myers2012}. Finally, Bayesian approaches that leverage virus DNA sequences have been used in the epidemiology literature \citep{dudas2017, lemey2014}. However, these techniques require manually segmenting time into epochs. Furthermore, none of the methods described above for parameter estimation come with statistical guarantees concerning inference.

In this paper, we consider a parametrized model of transmission inspired by percolation~\citep{kesten1993} and gravity models. Our goal is to perform statistical inference for the unknown parameter vector that determines the edge weights which govern the spread of the infection. Furthermore, we associate each edge of the graph to a vector of known covariates. This allows us to answer questions such as the following: In the week leading up to a sports match, writers and fans post on Twitter. Are friend-follower relationships important for the order in which posts occur? Do previous interactions predict future interactions? As a second example, suppose we wish to reconstruct the spread of Ebola in West Africa. Does the reconstruction reveal which factors, such as proximity or language, best correlate with the spread of the disease? This paper provides methods to analyze such datasets. Our analysis crucially leverages theoretical properties of P\'{o}lya urn processes~\citep{pemantle2007}: Specifically, the counts and weights of balls of a given color in the urn are separated, and the weights are to be inferred based on observing ball counts. To the best of our knowledge, this decoupling between counts and weights has not been previously studied in urn theory and provides a modeling tool of independent interest. We then allow for two scenarios: one in which we obtain the order of vertex infections, and another in which we observe an unordered set containing infection information.

Our main contributions are to analyze the maximum likelihood estimator and study its asymptotic properties. This allows us to construct confidence intervals and test the importance of covariates. Furthermore, we show that maximum likelihood estimation amounts to optimizing a convex objective function, which is therefore computationally tractable. The case where order information about successive transmissions is unavailable is far more difficult, since the maximum likelihood estimator becomes intractable to compute. In this scenario, we derive two equations that lead to algorithms for computing parameter estimates. Additionally, we provide a class of examples for which the two estimators coincide. Finally, we explore the empirical performance of our algorithms using synthetic spreading data and historical data obtained from the Ebola spread.

The paper is organized as follows: In Section~\ref{secModel}, we define our infection model.
In Section~\ref{secMLE},   we introduce and state the properties of the maximum likelihood estimate. Section~\ref{secExtensions} offers two extensions of the maximum likelihood estimation approach. Section~\ref{secWithoutOrder} contains estimators for the case of unordered infection data. In Sections~\ref{secSims} and \ref{secApplications}, we apply our methods to synthetic and real data, respectively. Finally, we conclude with a discussion of further research directions in Section~\ref{secDiscuss}. Due to space constraints, we provide proofs of all our theoretical results and tables of numerical results in the Appendix.


\section{Background and problem setup}
\label{secModel}

\subsection{Notation}

We begin by defining some basic notation. For a vector $\vect{y} \intext \real^m$, we write $\diag(\vect{y})$ to denote the $m \times m$ diagonal matrix with diagonal entries equal to $\vect{y}$. For two vectors $\vect{x}$ and $\vect{y}$, we use \(\vect{x} \circ \vect{y}\) to denote entrywise multiplication and $\vect{x} \odiv \vect{y}$ to denote entrywise division. 

\subsection{Stochastic spreading model}

Let $\graphset = (\vertexset, \edgeset)$ denote a directed graph, with vertex set \(\vertexset = \{1, \ldots, n\}\) and edge set $\edgeset \subseteq \vertexset \times \vertexset$, where we allow for self-loops. Let $m = |\edgeset|$. We also often identify the edges of $\graphset$ with the enumeration $\{1, \dots, m\}$. Each edge \((u, v) \intext \edgeset\) is assigned a covariate vector 
\(\vect{x}_{u, v} \intext \reals^{d}\) and a nonnegative weight \(w(u, v)\), possibly equal to a function of $\vect{x}_{u,v}$. We require that our graphs be {\em strongly connected}, meaning that between any two vertices, there is a path across edges of nonzero weight in each direction. We collect the covariate vectors as rows in a covariate matrix \(X \intext \real^{m \times d}\). Let $\neighbors_{\textout}(u) = \{v: (u,v) \text{ is in } \edgeset\}$ denote the set of out-neighbors of \(u\).

Next, we define the infection process spreading over the edges of $\graphset$. At time \(t = 1\), the infection originates from a single randomly chosen vertex \(v_{1}\). At each subsequent time \(t\), an infected vertex \(u_{t}\) is chosen among the set of previously infected vertices, and infects another vertex \(v_{t}\) along the edge $e_{t} = (u_t, v_t)$. We denote the \(\sigma\)-field of random infection events up to time \(k\) by \(\field_{k} = \sigma(v_{1}, e_{2}, \ldots, e_{k})\), and we denote the vector of ordered infection data by \(\ordered_{k} = (v_{1}, e_{2}, \ldots, e_{k})\). In the case of unordered infection data, we denote the set of unordered infection spreads by \(\unordered_{k} = \{e_{2}, \ldots, e_{k}\}\). We will also briefly discuss the case where only the ordered set of infected nodes \(\infected_{k} = (v_{1}, \ldots, v_{k})\) is observed, without information about which nodes $(u_2 , \dots, u_k)$ were the sources of transmission.

Importantly, a given node may be infected any number of times. Drawing an analogy to the scenario of epidemic spreading, we may treat each node as a different community, in which the infection count of a certain community may increase over time as more individuals become infected with the same disease. Transmissions between individuals in the same community correspond to self-loops in the edge infection data.

We now define the specific measure for our infection process that will be studied in this paper. Let $b_{t}(u) = |\{s < t: v_s = u\}|$ denote the number of times vertex \(u\) has been infected prior to time \(t\), and set \(b_{t}(e) = b_{t}(u)\) for \(e = (u, v)\). We define the infection probabilities by
\begin{align}
& \begin{aligned}
\prob\left(\ordered_{k}\right)
&=
\frac{1}{n} 
\prod_{t = 2}^{k}
\frac{b_{t}(e_{t}) w(e_{t})}{\sum_{e \in \edgeset} b_{t}(e) w(e)}.
\label{eqnModel}
   \end{aligned}
\end{align}
In other words, the first infected vertex is chosen uniformly at random. Each subsequent vertex \(v_{i}\) is chosen to be infected with probability proportional to the weights entering \(v_{i}\) from all previously infected vertices. This is equivalent to a continuous-time infection process where each new infection instantiates an independent exponential random variable along all outgoing edges with parameter proportional to the corresponding edge weight, corresponding to the waiting time until its neighbor becomes infected.

We will focus on the specific case where the weight function takes the form
\begin{align}
& \begin{aligned}
w_{\vbeta}(i, j) 
&=
\exp\left(\vect{x}_{i,j}^T \vect{\beta}\right),
\label{eqnExpWeight}
   \end{aligned}
\end{align}
where \(\vect{\beta}\) is a parameter vector in \(\reals^{d}\).
This formulation resembles the exponential form of infections given by \cite{dudas2017} and is useful for interpretability of covariates affecting a spreading process. Furthermore, as explained in Remark~\ref{RemGeneralWeights} below, the case of more general weight functions may be recast in terms of the exponential weight formulation~\eqref{eqnExpWeight} when the weights all lie in the interval \([0, 1].\) For more details, see Section~\ref{SecGeneral}.

Finally, we define the notation we will use for conditional and limiting measures.
We write \(\prob_{\vbeta}\) to denote the measure defined by equations~\eqref{eqnModel} and~\eqref{eqnExpWeight}, where \(\vbeta\) is the underlying parameter. We will generally use \(\vbeta_{0}\) to denote the true parameter. We define \(\prob_{\vbeta_{0}, t}((u,v))\) as shorthand for the conditional measure  \(\prob_{\vbeta_{0}}\left(e_t = (u,v) | \history_{t - 1}\right)\), meaning that the identity of the next infected edge \(e_{t}\) is drawn conditional on the infection history \(\history_{t - 1}\). We will often use the vector $Z_t$ to denote the vector of edge covariates corresponding to the edge $e_t$ drawn according to $\prob_{\vbeta_0, t}$. In detail, the probability space of the entire infection process is given by \((\Omega, \mathcal{F}, \prob_{\vbeta_{0}})\), so \(\prob_{\vbeta_{0}, t}\) is a random measure; since random variables are functions of their sample space, \(\prob_{\vbeta_{0}, t}(\cdot)\) is defined on \(\Omega \times \edgeset\).

We also define the limiting measure \(\prob_{\vbeta_{0}, \infty} = \lim_{t \to \infty} \prob_{\vbeta_{0}, t}\). A priori, it is not clear that such a limit must exist, but we will subsequently derive the existence of the limit as a consequence of our P\'{o}lya urn theory. We will also refer to the limiting measure, which turns out to be non-random, as \(\vect{\pi}_{\vbeta_0}\), when considered as a vector.


\subsection{Generalized P\'{o}lya urns}
\label{subsecPolya}

In this section, we briefly review the theory of generalized P\'{o}lya urns, which is an important component of our theoretical analysis~\citep{pemantle2007}. In a generalized P\'{o}lya urn, we have an urn containing balls of \(m\) different colors. In addition, each ball is labeled with a positive real-valued weight. The evolution of the urn is governed by an \(m \times m\) replacement matrix \(W\). At each time \(t\), a ball is drawn from the urn with probability proportional to its weight. If a ball of color \(i\) is drawn, it is returned to the urn along with one ball of color $j$ and weight $W_{ij}$, for each $j$ such that $W_{ij} > 0$.

To relate this urn model to the spreading process described by equation~\eqref{eqnModel}, consider a generalized P\'{o}lya urn defined by the edges of the infection graph. The colors of the balls in the urn correspond to the $m$ edges. We now describe the replacement matrix \(W\): The rows and columns of $W$ are indexed by the edges of the graph. For each pair of edges $(e,f)$, where $e = (u,v)$ and $f = (x,y)$, we define $W_{ef}$ to be nonzero exactly when $v = x$, in which case $W_{ef} = w(e)$. It is not hard to see that successive infections on the graph follow the same probabilistic mechanism as the urn evolution, where \(b_{t}(e)\) corresponds to the number of balls for edge \(e\) present in the urn at time \(t\). Thus, the goal is to infer the weight matrix of the P\'{o}lya urn based on ball counts. Note that if the first infected vertex is \(v_{1}\), we can consider the urn as being initialized with one ball of weight $w(e)$ for each edge \(e = (v_{1}, u)\), where $u$ is in $\neighbors_{\textout}(v_1)$.


\subsection{Relation to gravity models}

Finally, we comment on the relationship between our proposed model and the problem setup adopted in the literature on gravity models.
The term ``gravity model'' is used broadly to describe a model for a quantity that depends proportionally on certain quantities and inversely on certain others. The terminology is inspired by Newton's law of universal gravitation, in which the gravitational force between two point masses is proportional to the product of the two masses and the inverse of the square of the distance between them.

In social science, a seminal work on gravity models observed that the number of graduates of a university depends proportionally on the population of the region and inversely on the distance from the university~\citep{stewart1941}. The book by \cite{sen1995} considers gravity models in great detail. More recent works such as \cite{viboud2006} have developed gravity models for susceptible, infected, recovered (SIR) models of network infection. The underlying random variables are generally modeled as binomial, and the estimation procedure consists of fitting parameters that control the spread of disease from one region to another.

Other recent attempts at disease modeling~\citep{dudas2017, lemey2014} have utilized a continuous-time Markov process governed by an \(n \times n\) infinitesimal rate matrix \(\Lambda\), where
\begin{equation}
\log \Lambda_{(u, v)}
=
\sbeta_{1} x_{(u, v), 1} + \cdots + \sbeta_{d} x_{(u, v), d}.
\label{eqnGravityLambda}
\end{equation}
Similar to our setup, any subsequent infection occurs over the edge \((u, v)\) with probability proportional to some \(\Lambda_{(u, v)} = \exp(\vect{x}_{(u, v)}^{T} \vbeta)\). In addition, the number of infections over \((u, v)\) under equation~\eqref{eqnGravityLambda} over a pre-specified period of time, such as a day, is a Poisson random variable with mean  \(\Lambda_{(u, v)}\), as is the case for the standard gravity model of \cite{sen1995}.

However, a key difference between the model adopted by this approach and the setting of our paper is the reinforcing aspect of our process. In our model, the function \(b_{t}: \vertexset \rightarrow \real\) increases monotonically in $t$, meaning that the amount of infection at each node accumulates over time. In contrast, the gravity model of \cite{sen1995} does not contain this feature, and is better suited for applications such as modeling road traffic between two cities, which might reasonably be constant when adjusted for seasonality. Similarly, the spread of influenza considered in \cite{lemey2014} might fit well into this gravity model framework, partly because influenza is quite ubiquitous and the infection process is somewhat stationary.

However, cases may exist where these assumptions are unreasonable. In particular, the historical spread of Ebola was observed to pass through various phases where, at first, it was not widespread enough to travel over most edges~\citep{dudas2017}. In order to facilitate the methods described above, the Ebola spread was manually split into three phases, and separate parameters were fit for each phase. However, the information on where to partition the infection data was determined by first inspecting the data, leading to unaddressed questions involving complicated dependences and post-selection inference.
The reinforcement aspect built into the P\'{o}lya urn formulation obviates the need to manually segment data. Another important distinction between our work and the literature on gravity models is that we provide rigorous statistical theory for the estimation algorithms we propose. Lastly, note that we consider a discrete-time model of infection spread rather than a continuous-time model, out of convenience; hence, one downside of our model is that we are not able to infer how the rate of spreading might evolve in continuous time.


\section{Maximum likelihood estimation}
\label{secMLE}

\subsection{Log-likelihood expression}

We now investigate the problem of maximum likelihood estimation for the model defined above.
Let \(L(\vbeta; \ordered_{k}) = \prob_{\vbeta}(\ordered_{k})\) denote the likelihood of \(\ordered_{k}\), computed with respect to the parameter vector \(\vbeta\). The log-likelihood for the exponential parametrization is then given by
\begin{align}
& \begin{aligned}
\ell(\vbeta; \ordered_{k}) 
&=
\sum_{t = 2}^{k} 
\left(\vect{x}_{e_{t}}^{T} \vbeta
+ \log b_{t}(e_{t}) \right)
- 
\sum_{t = 2}^{k} 
\log\left[
\sum_{e = 1}^{m}  
b_{t}(e) \exp\left(\vect{x}_{e}^{T} \vbeta \right) \right]
- \log n.
\label{eqnExpLikelihood}
   \end{aligned}
\end{align}
Importantly, note that the objective function~\eqref{eqnExpLikelihood} is concave in $\vbeta$, since it is a difference of linear terms and log-sum-exp functions~\citep{BoyVan04}. Hence, we may compute the maximum likelihood estimator efficiently via convex programming. More details are contained in Section~\ref{secSims}.



In the succeeding sections, we will consider the existence, computability, and statistical properties of the maximum likelihood estimator.
Note that we denote the true parameter by \(\vect{\beta}_{0}\) and the maximum likelihood estimator by \(\hat{\vect{\beta}}_{k}\).

\subsection{Existence}
\label{subsecMLEExistence}

We first establish conditions for the existence of the maximum likelihood estimator. We make the following definition:

\begin{definition}
We say that the data $\ordered_k$ satisfy the {\em suboptimal sampling condition} if for each nonzero \(\vect{v} \in \reals^{d}\),
either
\begin{itemize}
\item[(i)] there exists a time $1 \le t \le k$ and an edge $f \in \edgeset$ such that \(b_{t}(f) > 0\) and 
\(\vect{x}_{e_{t}}^T \vect{v} < \vect{x}_{f}^T \vect{v}\); or
\item[(ii)] for all $1 \le t \le k$ and $f \in \edgeset$ such that \(b_{t}(f) > 0\), we have \(\vect{x}_{f}^T \vect{v} = c\) for some constant \(c\), independent of $t$ and $f$.
\end{itemize}
\end{definition}

Note that the condition $b_t(f) > 0$ simply means the source vertex of $f$ has accumulated at least one infection prior to time $t$. Furthermore, the condition $\vect{x}_{e_t}^T \vect{v} < \vect{x}_f^T \vect{v}$ is equivalent to the condition $w_{\vect{v}}(e_t) < w_{\vect{v}}(f)$; i.e., under parameter vector $\vect{v}$, the edge $f$ has heavier weight than $e_t$.

We then have the following theorem, which states that the suboptimal sampling condition is both necessary and sufficient to guarantee the existence of a maximizer:

\begin{theorem}
The suboptimal sampling condition is satisfied if and only if the likelihood function attains a maximum.
\label{propExistenceConditions}
\end{theorem}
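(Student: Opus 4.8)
The plan is to recognize the log-likelihood~\eqref{eqnExpLikelihood} as a concave function of $\vbeta$ and analyze when it fails to attain a maximum, namely when it is unbounded above along some ray. Since we have already observed concavity, a concave function on $\reals^d$ attains its maximum if and only if it has no direction of recession along which it is nondecreasing and eventually strictly increasing; equivalently, for every nonzero $\vect{v}$, the function $s \mapsto \ell(\vbeta + s\vect{v}; \ordered_k)$ is eventually strictly decreasing, or else bounded (in fact constant in the tail). So I would fix a nonzero direction $\vect{v}$ and study the asymptotics of $\ell(\vbeta + s\vect{v}; \ordered_k)$ as $s \to \infty$. The linear term contributes $s \sum_{t=2}^k \vect{x}_{e_t}^T\vect{v}$, and each log-sum-exp term $\log\left[\sum_{e=1}^m b_t(e)\exp(\vect{x}_e^T\vbeta + s\,\vect{x}_e^T\vect{v})\right]$ is asymptotically $s \cdot \max\{\vect{x}_f^T\vect{v} : b_t(f) > 0\}$ plus a bounded correction (the max is over edges actually present in the urn at time $t$). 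Hence the leading-order coefficient of $s$ in $\ell(\vbeta+s\vect{v};\ordered_k)$ is
\begin{align*}
\sum_{t=2}^k \left( \vect{x}_{e_t}^T\vect{v} - \max_{f : b_t(f)>0} \vect{x}_f^T\vect{v} \right).
\end{align*}

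Each summand is $\le 0$ since $e_t$ is itself an edge with $b_t(e_t) > 0$ (the chosen source $u_t$ was already infected). I would then argue that this coefficient is strictly negative precisely when alternative (i) of the suboptimal sampling condition holds for $\vect{v}$: if there is some time $t$ and edge $f$ with $b_t(f) > 0$ and $\vect{x}_{e_t}^T\vect{v} < \vect{x}_f^T\vect{v}$, then that term is strictly negative and the whole sum is negative, so $\ell \to -\infty$ along $\vect{v}$. Conversely, if (i) fails, then for every $t$ we have $\vect{x}_{e_t}^T\vect{v} = \max_{f : b_t(f)>0}\vect{x}_f^T\vect{v}$, so the linear-order coefficient vanishes; I would then check that in this regime alternative (ii) must in fact hold — the set of edges with positive ball count is nondecreasing in $t$ and, combined with strong connectivity, one can show $\vect{x}_f^T\vect{v}$ takes a common value $c$ on all edges ever present — and that the remaining bounded correction is also eventually constant, so $\ell$ is bounded along $\vect{v}$ and the supremum is not escaped in that direction. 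Ranging over all nonzero $\vect{v}$ and invoking the recession-cone characterization of attainment for concave functions then yields the equivalence.

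The main obstacle I anticipate is the converse direction, specifically showing that when (i) fails for a given $\vect{v}$, alternative (ii) genuinely holds — i.e., that $\vect{x}_f^T\vect{v}$ is constant across \emph{all} edges $f$ with $b_t(f) > 0$ for some $t \le k$, not merely that $e_t$ attains the running maximum at each step. This requires tracking how the support $\{f : b_t(f) > 0\}$ grows: a new edge $f = (x,y)$ enters the support once vertex $x$ is first infected, which happens via some edge $e_t = (\cdot, x)$, and one must propagate the equality $\vect{x}_{e_t}^T\vect{v} = \max \vect{x}^T\vect{v}$ through the infection tree using strong connectivity. A secondary, more routine technical point is to make rigorous the claim that the sub-leading (bounded) part of the log-sum-exp expansion does not itself create unbounded behavior once the leading coefficient is zero; this follows because $\log\left[\sum_{e : b_t(e)>0} b_t(e)\exp(\vect{x}_e^T\vbeta)\right]$ over the max-attaining edges stabilizes once $s$ is large, and standard boundedness of log-sum-exp over a finite set handles the rest. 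I would handle existence of the maximizer, once boundedness along every ray is established, by a routine compactness argument (a bounded concave upper-semicontinuous function on $\reals^d$ with all rays eventually nonincreasing attains its sup on a compact sublevel set).
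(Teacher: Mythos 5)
The central step of your converse --- that failure of alternative (i) for a direction $\vect{v}$ forces alternative (ii) --- is false, and the case you are trying to rule out is exactly the case that carries the content of the theorem. Failure of (i) only says that each observed transmission $e_t$ attains the running maximum $\max\{\vect{x}_f^T\vect{v} : b_t(f) > 0\}$; it does not force all supported edges to share one value of $\vect{x}_f^T\vect{v}$, because supported edges that are never transmitted across may have strictly smaller values. Concretely, with $d=1$, let the initially infected vertex have two outgoing edges with covariates $1$ and $0$, and suppose every observed transmission uses the covariate-$1$ edge; for $\vect{v}=1$ both (i) and (ii) fail, the suboptimal sampling condition fails, and the maximizer does not exist --- this is precisely the situation of Corollary~\ref{corBetaStarNoMLE}. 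The monotone growth of the support $\{f: b_t(f)>0\}$ and strong connectivity of $\graphset$ cannot rescue the claim: nothing in the dynamics forces a lighter available edge to ever be used. If your step were correct, the suboptimal sampling condition would hold for every data set and your argument would ``prove'' that the maximum always exists, contradicting the corollary. What is missing is the entire ``maximum attained $\Rightarrow$ suboptimal sampling'' direction: when both (i) and (ii) fail for some $\vect{v}$, you must show non-attainment. Your own expansion nearly gives it: the leading coefficient is $0$, but since some supported edge drops out of the limiting log-sum-exp, the ray limit strictly exceeds $\ell(\vbeta;\ordered_k)$ for every base point $\vbeta$; equivalently (as the paper argues) $\frac{d}{ds}\,\ell(\vbeta+s\vect{v};\ordered_k)>0$ for all $\vbeta$ and $s$, so any candidate maximizer $\hat{\vbeta}$ is beaten by $\hat{\vbeta}+\vect{v}$, and no maximum is attained.

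A secondary issue: the recession-type ``iff'' you invoke is not a valid statement about general concave functions --- $h(x,y)=\min\{0,\,1-e^{-x}-y^{2}\}$ is concave, attains its maximum on $\reals^{2}$, and yet is strictly increasing along the ray $s\mapsto(-10+s,\,1)$ --- so in the bad (non-suboptimally-sampled) case, non-attainment must come from the specific structure above (strict increase along $\vect{v}$ from \emph{every} base point), not from a generic concave-analysis fact. The half you do need generically --- if along every ray $\ell$ either tends to $-\infty$ or is constant, then the maximum is attained (concavity gives bounded superlevel sets after quotienting out the constancy subspace) --- is fine, and your case-(i)/case-(ii) ray asymptotics for that half are correct and essentially parallel the paper's forward argument via Lemma~\ref{lemmaExistenceRays}.
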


The proof can be found in Appendix~\ref{appExistUnique}. The suboptimal sampling condition is inspired by a relatively simple idea: In order to compare the propensity of one edge to spread infection versus another, we need instances in which infections are possible across both edges. Furthermore, we need instances of both spreads happening, or else our estimator would attempt to assign a probability of \(1\) to spreading across a particular edge.

\begin{remark}
Currently, we do not have a simple method to check whether the suboptimal sampling condition holds by hand, but it is possible to check the condition by determining whether an optimization problem with an arbitrary objective and linear inequality constraints is feasible.
\end{remark}

We now return to the possible non-existence of the maximum likelihood estimator.
To make an instance of non-existence more precise,
we offer an easy corollary to Theorem~\ref{propExistenceConditions}:

\begin{corollary}
Suppose there is a
$\vect{\beta} \intext \real^d$ such that 
\begin{equation}
\label{EqnNoMLE}
\vect{x}_{v_{s}, v}^{T} \vect{\beta} < \vect{x}_{u_{t}, v_{t}}^{T} \vect{\beta},
\end{equation}
for all $s \le t$ and $v \intext \neighbors_{\textout}(v_s)$ such that \((v_{s}, v) \neq (u_t, v_t)\).
Then the maximum likelihood estimator does not exist. 
\label{corBetaStarNoMLE}
\end{corollary}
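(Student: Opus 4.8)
The plan is to obtain the statement as an immediate consequence of Theorem~\ref{propExistenceConditions}: since that result says the likelihood attains a maximum if and only if the suboptimal sampling condition holds, it suffices to show that hypothesis~\eqref{EqnNoMLE} forces the suboptimal sampling condition to fail, with the failure witnessed by the direction $\vect{v} = \vect{\beta}$ itself. Intuitively, \eqref{EqnNoMLE} says that under the parameter $\vect{\beta}$, at every observed step $t$ the realized edge $e_t = (u_t, v_t)$ is the \emph{unique strictly heaviest} edge among all edges $f$ whose source has already been infected (those with $b_t(f) > 0$); so moving $\vbeta$ far in the direction $\vect{\beta}$ drives every conditional infection probability toward $1$ without ever reaching it, and the supremum of the likelihood is the unattainable value $1/n$.

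To make this precise I would verify that neither alternative in the definition of the suboptimal sampling condition holds for $\vect{v} = \vect{\beta}$ (note $\vect{\beta} \ne \vect{0}$ whenever the quantified family in \eqref{EqnNoMLE} is nonempty, since otherwise the inequality would read $0 < 0$). For alternative~(i): fix $1 \le t \le k$ and an edge $f$ with $b_t(f) > 0$. By definition of $b_t$, the source of $f$ is $v_s$ for some $s < t$, so $f = (v_s, v)$ with $v \in \neighbors_{\textout}(v_s)$ and $s \le t$; if $f = e_t$ then trivially $\vect{x}_{e_t}^T\vect{\beta} = \vect{x}_f^T\vect{\beta}$, while if $f \ne e_t$ then $(v_s, v) \ne (u_t, v_t)$ and \eqref{EqnNoMLE} gives $\vect{x}_f^T\vect{\beta} < \vect{x}_{e_t}^T\vect{\beta}$. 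In either case $\vect{x}_{e_t}^T\vect{\beta} \ge \vect{x}_f^T\vect{\beta}$, so no time/edge pair witnesses~(i). For alternative~(ii): at any time $t$ for which some edge $f \ne e_t$ has $b_t(f) > 0$ (the non-degenerate situation, which is exactly when \eqref{EqnNoMLE} is a nonvacuous hypothesis), the same computation produces the two distinct values $\vect{x}_f^T\vect{\beta} < \vect{x}_{e_t}^T\vect{\beta}$, so $\vect{x}_\cdot^T\vect{\beta}$ is not constant over the relevant edges and (ii) fails too. Hence the suboptimal sampling condition fails at $\vect{v} = \vect{\beta}$, and Theorem~\ref{propExistenceConditions} gives that the likelihood attains no maximum.

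Since the argument is essentially a substitution into the definition, the only point requiring care — and the one I would flag explicitly — is the bookkeeping between the quantifier ``$s \le t$'' in \eqref{EqnNoMLE} and the requirement ``$b_t(f) > 0$'' (which in fact forces $s < t$, hence is compatible), together with the degenerate boundary case in which at every step exactly one edge has ever been infected from: there \eqref{EqnNoMLE} is vacuously true, the likelihood is the constant $1/n$, and the conclusion should be read as the absence of a proper maximizer. Everything else is routine.
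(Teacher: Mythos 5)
Your proof is correct and follows essentially the same route as the paper, which presents the corollary as an immediate consequence of Theorem~\ref{propExistenceConditions}: the hypothesis~\eqref{EqnNoMLE} makes $\vect{v}=\vect{\beta}$ a direction violating both clauses of the suboptimal sampling condition, exactly the situation handled in the reverse direction of that theorem's proof. Your careful handling of the $s\le t$ versus $b_t(f)>0$ bookkeeping and of the vacuous single-self-loop case is a reasonable (and harmless) refinement of what the paper leaves implicit.
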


As a basic example in one dimension, if \(\vect{x}_{u_{t}, v_{t}}\) is always the largest of the \(\vect{x}_{i}\)'s with \(b_{t}(i) > 0\); i.e., all spreads happen on the edges with the heaviest weights, the condition~\eqref{EqnNoMLE} is satisfied with $\vect{\beta}$ equal to any vector with strictly positive entries. Corollary~\ref{corBetaStarNoMLE} also illustrates the meaning of the term ``suboptimal sampling condition," since it gives a case where only the heaviest-weight vertices receive a transmission event on any time step.


\subsection{Uniqueness}
\label{subsubsecUniqueness}

Our goal is to show that the log-likelihood is strictly concave.
We start by establishing a helpful representation for the Hessian of the log-likelihood:

\begin{proposition}
Let \(Z_{t}\) be sampled from the \(\vect{x}_{e}\)'s with probability proportional to \(b_{t}(e) \exp(\vect{x}_{e}^{T} \vbeta)\).
Let \(C_{t} \defeq \cov_t(Z_t, Z_t)\) denote the covariance matrix of \(Z_{t}\) under \(\prob\left(\cdot \mid \field_{t - 1}\right)\).
The Hessian of the log-likelihood is given by
\[
\hessian\big(\ell(\vbeta; \ordered_{k})\big)
=
-\sum_{t = 2}^{k} C_{t}.
\]
In particular, if $\rank(C_{t}) = d$ for some $t$, the log-likelihood is strictly concave.
\label{propLogLikelihoodHessian}
\end{proposition}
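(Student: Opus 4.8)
The plan is to differentiate the log-likelihood~\eqref{eqnExpLikelihood} twice in $\vbeta$ and recognize the resulting second-order term as a conditional covariance.

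\textbf{Reduction to the log-sum-exp terms.} In~\eqref{eqnExpLikelihood}, the term $\sum_{t=2}^{k} \vect{x}_{e_t}^{T}\vbeta$ is linear in $\vbeta$, while $\sum_{t=2}^{k}\log b_t(e_t)$ and $\log n$ do not involve $\vbeta$ at all: the counts $b_t(e)$ are measurable functions of the observed data $\ordered_k$ only. Hence all three contribute nothing to the Hessian, and it suffices to compute $\nabla^2 g_t$ for each $t$, where $g_t(\vbeta) \defeq \log\big[\sum_{e=1}^{m} b_t(e)\exp(\vect{x}_e^{T}\vbeta)\big]$.

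\textbf{Differentiating a single log-sum-exp term.} Fix $t$ and set $p_e(\vbeta) \defeq b_t(e)\exp(\vect{x}_e^{T}\vbeta)\big/\sum_{e'=1}^{m} b_t(e')\exp(\vect{x}_{e'}^{T}\vbeta)$, so that $\{p_e(\vbeta)\}_{e=1}^{m}$ is precisely the law of the sampled covariate vector $Z_t$ from the proposition statement, namely the conditional distribution $\prob(e_t = \cdot \mid \field_{t-1})$ under parameter $\vbeta$. A direct computation gives $\nabla g_t(\vbeta) = \sum_e p_e(\vbeta)\vect{x}_e = \E[Z_t \mid \field_{t-1}]$, and differentiating once more, using $\partial p_e / \partial \beta_k = p_e\big(x_{e,k} - \sum_{e'} p_{e'} x_{e',k}\big)$, yields
\[
\nabla^2 g_t(\vbeta) = \sum_e p_e(\vbeta)\,\vect{x}_e\vect{x}_e^{T} - \Big(\sum_e p_e(\vbeta)\vect{x}_e\Big)\Big(\sum_e p_e(\vbeta)\vect{x}_e\Big)^{T} = \E[Z_t Z_t^{T}\mid\field_{t-1}] - \E[Z_t\mid\field_{t-1}]\E[Z_t\mid\field_{t-1}]^{T} = C_t.
\]
Summing over $t = 2, \dots, k$ and carrying the minus sign from~\eqref{eqnExpLikelihood} gives $\hessian\big(\ell(\vbeta;\ordered_k)\big) = -\sum_{t=2}^{k} C_t$.

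\textbf{Strict concavity, and the main obstacle.} Each $C_t$, being a covariance matrix, satisfies $C_t \succeq 0$, so $\hessian(\ell) \preceq 0$ and $\ell$ is concave. If $\rank(C_{t_0}) = d$ for some $t_0$, then $C_{t_0} \succ 0$; since the remaining $C_t$ are positive semidefinite, $\sum_{t=2}^{k} C_t \succeq C_{t_0} \succ 0$, whence $\hessian(\ell) \prec 0$ and $\ell$ is strictly concave. One may further note that, because $\exp(\vect{x}_e^{T}\vbeta) > 0$, the support of $p_e(\vbeta)$ equals $\{e : b_{t_0}(e) > 0\}$ independently of $\vbeta$, so $\rank(C_{t_0})$ --- equal to the affine dimension of $\{\vect{x}_e : b_{t_0}(e) > 0\}$ --- is $\vbeta$-free, and the full-rank condition, when it holds, gives strict concavity on all of $\reals^{d}$. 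There is no genuinely hard step here; the only care needed is (a) confirming that the counts $b_t(e)$ pass through the differentiation as constants, and (b) matching the normalized weights $p_e(\vbeta)$ with the conditional law of $Z_t$ so that the gradient and Hessian can be read off directly as a conditional mean and a conditional covariance.
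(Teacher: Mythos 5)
Your proposal is correct and follows essentially the same route as the paper: the paper computes the Hessian by differentiating the conditional moments (its Lemma on derivatives of moments plus the gradient expression in its Remark), which is exactly your direct log-sum-exp calculation yielding $\nabla^2 g_t = C_t$, and the strict-concavity conclusion is the same positive-semidefiniteness argument. Your added observation that $\rank(C_{t})$ is $\vbeta$-free (since the support $\{e: b_t(e)>0\}$ does not depend on $\vbeta$) is a slightly more careful justification of why full rank at one $t$ gives strict concavity on all of $\reals^d$ than the paper spells out, but it is the same underlying argument.
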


The proof proceeds via a straightfoward calculation and is provided in Appendix~\ref{appCovRanks}. The next proposition provides a sufficient condition for having full-rank covariance matrices.

\begin{proposition}
Suppose \(b_{t}(e) > 0\) for all $e \intext \edgeset$,
and suppose \(\rank(X) = d\).
If there is a vector \(\vect{v}\) satisfying 
\[
X \vect{v} = \vect{1},
\]
then \(\rank(C_{t}) = d - 1\).
Otherwise, we have \(\rank(C_{t}) = d\).
\label{propCovRanks}
\end{proposition}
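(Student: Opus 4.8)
The plan is to identify the null space of $C_{t}$ explicitly and then read off its dimension from the two hypotheses. The starting point is the standard fact that a covariance matrix is positive semidefinite, so for every $\vect{u} \in \reals^{d}$ we have $\vect{u}^{\trans} C_{t} \vect{u} = \var_{t}(\vect{u}^{\trans} Z_{t}) \ge 0$, and consequently $\vect{u} \in \ker(C_{t})$ if and only if $\vect{u}^{\trans} Z_{t}$ has zero conditional variance, i.e.\ $\vect{u}^{\trans} Z_{t}$ is almost surely equal to its conditional mean under $\prob(\cdot \mid \field_{t-1})$.

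Next I would use the hypothesis $b_{t}(e) > 0$ for all $e \in \edgeset$. Under $\prob(\cdot \mid \field_{t-1})$, the vector $Z_{t}$ equals $\vect{x}_{e}$ with probability proportional to $b_{t}(e) \exp(\vect{x}_{e}^{\trans} \vbeta)$, which is strictly positive for every edge $e$; hence the conditional support of $Z_{t}$ is exactly the full set $\{\vect{x}_{e} : e \in \edgeset\}$, i.e.\ all rows of $X$. Therefore $\vect{u}^{\trans} Z_{t}$ is almost surely constant precisely when $\vect{u}^{\trans}\vect{x}_{e}$ is the same scalar for every $e \in \edgeset$, that is, when $X\vect{u} = c\vect{1}$ for some $c \in \reals$. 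This gives the clean description $\ker(C_{t}) = \{\vect{u} \in \reals^{d} : X\vect{u} \in \operatorname{span}(\vect{1})\}$.

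It then remains to compute the dimension of this space in the two cases, using $\rank(X) = d$, which means $X$ has trivial kernel (is injective as a map $\reals^{d} \to \reals^{m}$). If there exists $\vect{v}$ with $X\vect{v} = \vect{1}$, then $c\vect{v} \in \ker(C_{t})$ for every $c$; conversely $X\vect{u} = c\vect{1} = X(c\vect{v})$ forces $X(\vect{u} - c\vect{v}) = \vect{0}$, hence $\vect{u} = c\vect{v}$ by injectivity of $X$. Thus $\ker(C_{t}) = \operatorname{span}(\vect{v})$, which is exactly one-dimensional since $\vect{v} \ne \vect{0}$ (because $X\vect{v} = \vect{1} \ne \vect{0}$), so $\rank(C_{t}) = d - 1$. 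If instead no such $\vect{v}$ exists, i.e.\ $\vect{1} \notin \operatorname{col}(X)$, then $X\vect{u} = c\vect{1}$ forces $c = 0$ (otherwise $\vect{1} = X(\vect{u}/c) \in \operatorname{col}(X)$, a contradiction), and then $X\vect{u} = \vect{0}$ forces $\vect{u} = \vect{0}$, again by injectivity; hence $\ker(C_{t}) = \{\vect{0}\}$ and $\rank(C_{t}) = d$.

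The argument is elementary linear algebra once the null-space characterization is established, so I do not anticipate a serious obstacle. The one point deserving care is the step reducing ``$\vect{u}^{\trans} Z_{t}$ almost surely constant'' to ``$X\vect{u} \in \operatorname{span}(\vect{1})$'': this uses in an essential way that $b_{t}(e) > 0$ for \emph{every} edge, so that the support of $Z_{t}$ is the complete row set of $X$. Without that assumption the support could be a proper subset of $\{\vect{x}_{e}\}_{e \in \edgeset}$, and the conclusion would concern only the active submatrix of $X$ rather than $X$ itself.
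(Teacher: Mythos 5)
Your proof is correct, and it reaches the conclusion by a cleaner route than the paper's. Both arguments pivot on the same key observation---that $\vect{u}^{\trans} C_t \vect{u} = \var_t(\vect{u}^{\trans} Z_t)$ vanishes exactly when the linear functional $\vect{u}$ is constant on the conditional support of $Z_t$, which by the hypothesis $b_t(e)>0$ for every edge is the full row set of $X$---but the executions differ. The paper first proves an auxiliary rank-invariance lemma (rank of the covariance is unchanged under $X \mapsto XM$ for invertible $M$), uses $\rank(X)=d$ to reduce to a normal form $X' = \left[\begin{smallmatrix} I_d \\ H \end{smallmatrix}\right]$, and then argues by contradiction in those coordinates, splitting on whether $H\vect{1} = \vect{1}$. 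You instead characterize the kernel directly: $\ker(C_t) = \{\vect{u} : X\vect{u} \in \operatorname{span}(\vect{1})\}$ (using the standard fact that for a positive semidefinite matrix $\vect{u}^{\trans} C_t \vect{u} = 0$ iff $C_t\vect{u}=\vect{0}$, which you rightly make explicit and the paper leaves implicit), and then compute its dimension from the injectivity of $X$ and rank–nullity. This bypasses the change-of-basis lemma and the case analysis in transformed coordinates entirely, at no loss of generality; the paper's normalization buys a very concrete coordinate picture (the constant is pinned down by the identity block), while your version makes the structural statement---the null space is precisely the preimage of $\operatorname{span}(\vect{1})$ under $X$---transparent. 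You also correctly flag that the positivity of all $b_t(e)$ is what guarantees the full row set is in the support; without it only the active submatrix would be constrained, which matches the role this hypothesis plays in the paper.
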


The proof is somewhat involved computationally and is deferred to Appendix~\ref{appCovRanks}. In general, we expect that \(\rank(C_{t}) = d\), unless \(X\) is degenerate and has a solution to \(X \vect{v} = \vect{1}\).

Note that for large enough $t$, we will have $\min_{1 \le i \le m} b_t(i) > 0$, almost surely. Hence, Propositions~\ref{propLogLikelihoodHessian} and~\ref{propCovRanks} together imply that if $\rank(X) = d$ and $X\vect{v} \neq \vect{1}$ for all $\vect{v}$, the log-likelihood is strictly concave for sufficiently large $t$.



\subsection{Asymptotic theory}
\label{subsecMLEProps}

We now turn to our main statistical result, which is a theorem establishing asymptotic properties of the maximum likelihood estimator. 

\begin{theorem}
Suppose that \(\rank(X) = d\) and \(X\vect{v} \neq \vect{1}\) for any \(\vect{v}\).
If \(\graphset\) is strongly connected and \(\vect{\beta}_{0}\) is \(d\)-dimensional, 
then the maximum likelihood estimator exists for sufficiently large $k$. Furthermore, it is consistent, and we have the convergence in distribution
\[
k^{-1/2}
(\hat{\vect{\beta}}_{k} - \vect{\beta}_{0})
\lawto 
\normal_{d}(0, I^{-1}_{\infty}(\vect{\beta}_{0})),
\]
where the \((a, b)\) coordinate of \(I_{\infty}(\vbeta_{0})\) is 
\[
I_{\infty, ab}(\vbeta_0) 
\defeq
\expect_{\vbeta_{0}}\left[
\frac{\partial}{\partial \sbeta(a)} \log \prob_{\vbeta_{0}, \infty}\{Z_\infty\} 
\frac{\partial}{\partial \sbeta(b)} \log \prob_{\vbeta_{0}, \infty}\{Z_\infty\}
\right].
\]
\label{theoremAsNormal}
\end{theorem}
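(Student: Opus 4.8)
The plan is to treat the score function as a martingale and apply a martingale central limit theorem, using the Pólya urn theory to establish convergence of the conditional covariances. First I would establish the existence claim: by the remark following Proposition~\ref{propCovRanks}, once $\min_i b_k(i) > 0$ (which happens almost surely for large $k$ since $\graphset$ is strongly connected, so every edge eventually accumulates infections) and $\rank(X) = d$ with $X\vect{v} \neq \vect{1}$, the log-likelihood is strictly concave; combined with a suboptimal-sampling-type argument ensuring coercivity in the relevant directions (Theorem~\ref{propExistenceConditions}), the maximizer $\hat{\vbeta}_k$ exists and is unique for $k$ large. Next, write the score $S_k(\vbeta) := \nabla_{\vbeta} \ell(\vbeta; \ordered_k) = \sum_{t=2}^k \left( \vect{x}_{e_t} - \expect[Z_t \mid \field_{t-1}] \right)$, and observe that each summand $D_t := \vect{x}_{e_t} - \expect[Z_t \mid \field_{t-1}]$ is a martingale difference with respect to $\{\field_t\}$ under $\prob_{\vbeta_0}$, with $\expect[D_t D_t^T \mid \field_{t-1}] = C_t$, exactly the Hessian term from Proposition~\ref{propLogLikelihoodHessian}.

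The crux is to show that the limiting infection measure $\vect{\pi}_{\vbeta_0} = \prob_{\vbeta_0,\infty}$ exists and is \emph{non-random}, and that the normalized conditional covariance converges: $k^{-1} \sum_{t=2}^k C_t \to I_\infty(\vbeta_0)$ almost surely, where $I_\infty$ is the (deterministic) covariance of $Z_\infty$ under $\vect{\pi}_{\vbeta_0}$. This is where the weighted Pólya urn results of \citep{pemantle2007} enter: in the urn correspondence of Section~\ref{subsecPolya}, the vector of normalized ball counts $\left( b_t(e) / \sum_f b_t(f) \right)_{e \in \edgeset}$ converges almost surely to a limit, and strong connectivity of $\graphset$ (irreducibility of the replacement structure) forces this limit to be the deterministic left Perron eigenvector-type object rather than a random Dirichlet-like limit. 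Consequently $\prob_{\vbeta_0,t}(\cdot) \to \vect{\pi}_{\vbeta_0}$ a.s., which both justifies the earlier-promised existence of $\prob_{\vbeta_0,\infty}$ and gives $C_t \to I_\infty(\vbeta_0)$ a.s. by continuity; Cesàro averaging then yields the normalized sum limit. The rank hypotheses guarantee $I_\infty(\vbeta_0) \succ 0$, so it is invertible.

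With these pieces, I would invoke a martingale CLT (e.g., the Cramér--Wold device plus a standard martingale difference CLT with the Lindeberg condition verified via the uniform boundedness of $D_t$, since the covariates $\vect{x}_e$ range over a finite set): $k^{-1/2} S_k(\vbeta_0) \lawto \normal_d(0, I_\infty(\vbeta_0))$. Consistency of $\hat{\vbeta}_k$ follows from the strict concavity together with the fact that $k^{-1} \ell(\vbeta; \ordered_k)$ converges to a deterministic limit maximized uniquely at $\vbeta_0$ (again using $\prob_{\vbeta_0,t} \to \vect{\pi}_{\vbeta_0}$ and a uniform law of large numbers over a compact parameter neighborhood). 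Finally, a Taylor expansion of the score around $\vbeta_0$, $0 = S_k(\hat{\vbeta}_k) = S_k(\vbeta_0) + H_k(\tilde{\vbeta}_k)(\hat{\vbeta}_k - \vbeta_0)$ with $\tilde{\vbeta}_k$ on the segment between $\hat{\vbeta}_k$ and $\vbeta_0$, combined with $-k^{-1} H_k(\tilde{\vbeta}_k) \to I_\infty(\vbeta_0)$ (using consistency and the covariance convergence, with care that the convergence is uniform in a neighborhood), gives $k^{1/2}(\hat{\vbeta}_k - \vbeta_0) = \left( -k^{-1} H_k(\tilde{\vbeta}_k) \right)^{-1} k^{-1/2} S_k(\vbeta_0) \lawto \normal_d(0, I_\infty^{-1}(\vbeta_0))$ by Slutsky's theorem.

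The main obstacle I anticipate is establishing that the limiting urn composition is non-random under strong connectivity — generalized Pólya urns can have genuinely random limits (as in the classical Pólya urn), and pinning down that the \emph{weighted, irreducible} case of interest here concentrates requires care with the spectral structure of the replacement matrix $W$ and a suitable stochastic-approximation or martingale argument; everything downstream (existence of $\prob_{\vbeta_0,\infty}$, the identification of $I_\infty$, consistency, and the CLT normalization) hinges on this fact.
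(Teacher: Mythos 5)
Your proposal is correct and follows essentially the same route as the paper: existence and consistency via strict concavity of the log-likelihood, almost-sure convergence of the conditional measures to the deterministic Perron-type limit $\vect{\pi}_{\vbeta_0}$ established through the irreducible weighted P\'{o}lya urn (the paper does this via the Athreya--Karlin branching-process embedding), a martingale CLT for the score $\sum_t (Z_t - \expect_t[Z_t])$, and a Taylor expansion of the score plus Slutsky-type arguments to transfer the limit to $\hat{\vbeta}_k$. The only differences are cosmetic choices of tools (Cram\'{e}r--Wold with a scalar martingale CLT and a ULLN-style consistency argument, versus the paper's multivariate martingale CLT of K\"{u}chler--S{\o}rensen and a Lehmann-style local argument on a small sphere around $\vbeta_0$), which do not change the substance of the proof.
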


The proof of Theorem~\ref{theoremAsNormal} is contained in Appendix~\ref{SecAsymp}. Although the general approach of deriving asymptotic normality resembles the techniques used in the standard theory of $M$-estimation in a multivariate setting, several technical challenges arise due to the fact that our objective function is not simply a sum of independent, identically distributed terms.

\begin{remark}
Recall from Corollary~\ref{corBetaStarNoMLE} that the maximum likelihood estimator may not exist. However,
as \(k\) tends to infinity, Theorem~\ref{theoremAsNormal} guarantees that the maximum likelihood estimator exists, meaning that the suboptimal sampling condition mentioned in Theorem~\ref{propExistenceConditions} must eventually be satisfied for sufficiently large $k$. This agrees with the intuition that as $k$ increases, the probability of spreads happening exclusively on edges of largest weight decays to 0.
\end{remark}


\section{Extensions}
\label{secExtensions}

\subsection{General weight functions}
\label{SecGeneral}

We now briefly discuss extensions of the maximum likelihood theory we have derived for exponential weights to slightly more general frameworks. We begin by considering the maximum likelihood estimator for non-exponentially parametrized weight functions.
We propose a two-step algorithm, described in Algorithm~\ref{algGeneralWeights}, to compute a maximum likelihood estimator:

\begin{algorithm}[!h]
\SetKwInOut{Input}{Input}
\Input{Transmission set \(\ordered_{k}\).}

For each edge \((u, v)\), if $(u, v)$ is not in $\ordered_k$, set \(\hat{w}(u, v) = 0\).

For each remaining edge \((u, v)\), define $\hat{z}(u, v)$ to be the corresponding component of the optimization problem
\begin{align}
\label{eqnGeneralOpt}
\hat{z} \in \arg\max_{z \in \real^{|\ordered_k|}} 
\hspace{10pt} 
& \sum_{t = 2}^{k} z(e_{t}) 
-
\sum_{t = 2}^{k} 
\log\left[
\sum_{e = 1}^{m} b_{t}(e) \exp(z(e)) \right],
\end{align}

and set $\hat{w}(u, v) = \exp\big(\hat{z}(u,v)\big)$.

\caption{General weights maximum likelihood computation}
\label{algGeneralWeights}
\end{algorithm}


A simple argument shows that Algorithm~\ref{algGeneralWeights} computes a maximum likelihood estimator in the case of general weights, if it exists: 
If $(u,v)$ is not in  $\ordered_k$ and $w(u,v) \neq 0$, the value of the likelihood is not decreased by setting $w(u,v) = 0$. Now consider $(u,v)$ in $\ordered_k$. Clearly, the weight \(w(u, v)\) must be positive in order for the likelihood to be nonzero. As a result, we can then define \(z(u, v) = \log w(u, v)\). It is easy to see that maximizing the reparametrized likelihood then maximizes the original likelihood.


\begin{remark}
\label{RemGeneralWeights}
The reasoning above allows us to apply our statistical theory to the case of general weights.
Specifically, letting \(\vect{e}_{i}\) denote the \(i^{\text{th}}\) standard basis vector in \(\reals^{m}\), we set \(\vect{x}_{u, v} = \vect{e}_{i}\) for \(i \cong (u, v)\).
We then have \(z(u, v) = \vect{e}_{i}^{T} \vect{\beta},\)
and substituting this into equation~\eqref{eqnGeneralOpt} yields a likelihood of the same form as equation~\eqref{eqnExpLikelihood}.
\end{remark}

Note that by Remark~\ref{RemGeneralWeights}, we have \(X = I_{m}\) in the case of general weights. Thus, Proposition~\ref{propCovRanks} implies that $\rank(C_t) = d - 1$, so the maximum likelihood estimator is not necessarily unique. This should not be surprising, however, since we already know the weights are only unique up to scaling. To amend this problem, suppose we fix the value of the last edge weight to remove one degree of freedom. Maximizing the likelihood then amounts to solving the maximum likelihood estimation problem with
\[
X
=
\left[\begin{array}{c}
I_{m - 1} \\ \hline 
\vect{0}^{T}
\end{array}
\right] \in \real^{m \times (m-1)}.
\]
By Proposition~\ref{propCovRanks}, we then have $\rank(C_{t}) = m - 1 = \rank(X)$, guaranteeing that the maximum likelihood estimator for the new likelihood function is unique.

We also remark that although the method for general weight estimation is mathematically rigorous and leads to accurate methods for simulating future spread of the disease, it may be more difficult to interpret the meaning of the estimated coefficients. This is unlike the case of the exponential weights parametrization, in which we can interpret the relative sizes of components in the parameter vector as providing the relative importance of various edge covariates.


Finally, we return to the setting of a $d$-dimensional exponential parametrization. Suppose we have obtained weight estimates $\tilde{\vect{w}}$ using Algorithm~\ref{algGeneralWeights}. To obtain an exponential parametrization, we simply perform the following projection, where the logarithm of $\tilde{\vect{w}}$ is taken componentwise:
\begin{equation}
\tilde{\vect{\beta}}
=
\arg\min_{\vect{\beta} \in \reals^{d}} \|X \vect{\beta} - \log \tilde{\vect{w}}\|_2^{2}
=
(X^{T} X)^{-1} X^{T} (\log \tilde{\vect{w}}) 
\defeq 
\proj_{X}(\tilde{\vect{w}}).
\label{eqnProjection}
\end{equation} 

Recall that \(\tilde{\vect{w}}\) is only unique up to scaling, so we may wonder about the consequences of this scaling on the projection. However, the arbitrary scale factor is accounted for by adding an intercept term to $\beta$ when taking the projection.
Indeed, note that scaling \(\tilde{\vect{w}}\) results in adding a constant to the coordinates of \(\log \tilde{\vect{w}}\), which is eliminated by the intercept term.

Naturally, accurate estimates of \(\tilde{\vect{w}}\) translate into accurate estimates of \(\tilde{\vect{\beta}}\). To rigorize this notion, we state the following result, which follows immediately from the continuity and measurability of \(\proj_{X}\):

\begin{proposition}
Let \(\vbeta_{0}\) be the true parameter of dimension \(d\),
and define \(\vect{w}_{0} = \exp(X \vbeta_{0})\).
If \(\tilde{\vect{w}}_{k}\) is consistent for \(\vect{w}_{0}\), 
then \(\tilde{\vbeta}_{k} = \proj_{X}(\tilde{\vect{w}}_{k})\) is consistent for \(\vbeta_{0}\).
\label{propProjection}
\end{proposition}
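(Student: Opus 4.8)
The plan is to invoke the continuous mapping theorem for consistency, after verifying that the projection map $\proj_X$ is continuous on the relevant domain. First I would note that $\proj_X(\vect{w}) = (X^T X)^{-1} X^T (\log \vect{w})$ is well-defined since $\rank(X) = d$ guarantees $X^T X$ is invertible. The logarithm is applied componentwise, so $\proj_X$ is a composition of the componentwise logarithm (continuous on $(0, \infty)^m$) with a fixed linear map; hence $\proj_X$ is continuous on $(0,\infty)^m$. Since $\vect{w}_0 = \exp(X \vbeta_0)$ has strictly positive entries, it lies in the interior of the domain of continuity.

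Next I would apply the continuous mapping theorem. By hypothesis, $\tilde{\vect{w}}_k \to \vect{w}_0$ in probability (consistency). Since $\proj_X$ is continuous at $\vect{w}_0$, it follows that $\proj_X(\tilde{\vect{w}}_k) \to \proj_X(\vect{w}_0)$ in probability. It remains only to identify $\proj_X(\vect{w}_0)$ with $\vbeta_0$: since $\log \vect{w}_0 = X \vbeta_0$ exactly, we have
\[
\proj_X(\vect{w}_0) = (X^T X)^{-1} X^T X \vbeta_0 = \vbeta_0,
\]
using invertibility of $X^T X$ once more. Combining the two displays gives $\tilde{\vbeta}_k \to \vbeta_0$ in probability, which is exactly consistency of $\tilde{\vbeta}_k$ for $\vbeta_0$.

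One technical point worth addressing: the statement notes that $\tilde{\vect{w}}_k$ from Algorithm~\ref{algGeneralWeights} is only unique up to a positive scalar, and one might worry whether "$\tilde{\vect{w}}_k$ consistent for $\vect{w}_0$" is even a coherent hypothesis. This is handled exactly as in the remarks preceding the proposition: scaling $\tilde{\vect{w}}_k$ by a constant $c$ adds $\log c$ to every coordinate of $\log \tilde{\vect{w}}_k$, i.e., adds $(\log c)\vect{1}$; if the intercept is included as a column of $X$ (so that $\vect{1}$ lies in the column space of $X$), then this shift is annihilated by the residual projection and leaves $\proj_X$ unchanged up to a corresponding shift in the intercept coordinate. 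So we may assume a fixed normalization of $\tilde{\vect{w}}_k$ without loss of generality, and the argument above goes through. I do not anticipate a genuine obstacle here — the content is entirely the continuity of $\proj_X$ plus the continuous mapping theorem — which is precisely why the paper states the result "follows immediately." The only place to be careful is making the domain-of-continuity and normalization bookkeeping explicit.
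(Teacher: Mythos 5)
Your proposal is correct and matches the paper's argument: the paper simply asserts the result "follows immediately from the continuity and measurability of $\proj_X$," which is exactly the continuous-mapping-theorem argument you spell out, together with the identification $\proj_X(\vect{w}_0) = (X^TX)^{-1}X^TX\vbeta_0 = \vbeta_0$. Your added bookkeeping about the componentwise logarithm's domain and the scaling/intercept normalization is consistent with the remarks the paper makes just before the proposition.
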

As a result, we have a suitable way of recovering the effects of covariates, provided we have a consistent estimator for general weights.


\subsection{Unknown sources}

We now consider an extension to the case where the set of infecting vertices is unknown. The probability of an infection set $\infected_k$ is then
\begin{align}
& \begin{aligned}
\prob\left(\infected_{k}\right) 
&=
\frac{1}{n} 
\cdot 
\frac{ \sum_{t = 1}^{1} b_2(v_t, v_2) w(v_{t}, v_{2})}{\sum_{e = 1}^{m} b_{2}(e) w(e)}
\cdots
\frac{\sum_{t = 1}^{k - 1} b_k(v_t, v_k) w(v_{t}, v_{k})}{\sum_{e = 1}^{m} b_{k}(e) w(e)}.
\label{eqnModel2}
   \end{aligned}
\end{align}
The process described in equation~\eqref{eqnModel2} also parallels the evolution of a generalized P\'{o}lya urn model, where the balls correspond to vertices rather than edges. Here, the replacement matrix \(W\) is \(n \times n\), and each entry \(W_{u, v}\) is simply equal to the weight function \(w(u, v)\).

However, we encounter one complication concerning the maximum likelihood estimator: the log-likelihood is no longer convex in general.
We mention a special case in which the theory described in the previous sections may be applied directly. Suppose the weight function \(w(u, v)\) is known to be constant in \(u\).
If we consider the exponential parametrization  \(w(u, v) = \vect{x}_{v}^{T} \vect{\beta}\) for some vectors \(\vect{x}_{v}\), we obtain the log-likelihood
\begin{align}
& \begin{aligned}
\ell(\vect{\beta}; \infected) 
&=
\sum_{t = 2}^{k} 
\left(\vect{x}_{v_{t}}^{T} \vect{\beta} 
+
\log \left(\sum_{i=1}^{t-1} b_{t}(v_i, v_t) \right)
\right)
- 
\sum_{t = 2}^{k} 
\log\left[
\sum_{e = 1}^{m} b_{t}(e) \exp(\vect{x}_{e}^{T} \vbeta)
\right]
- \log n.
\label{eqnExpLikelihoodVertices}
   \end{aligned}
\end{align}
The only difference between equations~\eqref{eqnExpLikelihood} and \eqref{eqnExpLikelihoodVertices} is the change of the constant term in the first summand, which does not alter the statistical analysis.


\section{Estimation without order information}
\label{secWithoutOrder}

\subsection{Problem setting}

Finally, we examine the substantially more difficult setting where edge transmissions are unordered: Instead of receiving information \(\ordered_{k}\), we observe  \(\unordered_{k} = \left\{(u_{2}, v_{2}), \ldots, (u_{k}, v_{k})\right\}\),
the unordered set of edges across which infection has spread.

We first note that the infection could have occurred over a variety of different paths, i.e.,
possible orderings of elements of \(\unordered_{k}\) that could produce an infection vector \(\ordered_{k}\). Since the computation of the likelihood involves summing over all possible infection paths, computing the likelihood will often be intractable for large graphs. For instance, for the complete graph, this leads to \((k - 1)!\) summands. In addition, the log-likelihood takes a more complicated form and may not be concave in general; more precisely, the log-likelihood results in the logarithm being applied to an additional sum, leading to a composition of a convex and a concave function. Since this is in general not concave, we do not automatically obtain an efficient method for computing the maximum likelihood estimator even if we ignore the intractability of the sums. Thus, we need an alternative solution. We consider the case of general weights, for which we offer two approaches.


\subsection{Using a limiting distribution}
\label{subsecEmpirical}

The first approach is to derive the unknown weight vector $\vect{w}$ as the limit of a quantity computed using finite samples. Since \(\vect{w}\) may be scaled arbitrarily, we set \(\|\vect{w}\|_{1} = 1\). For an edge \(i = (u, v)\), we write \(c_{t}(i)\) to denote the number of transmissions across edge \(i\) prior to time \(t\), leading to the vector $\vect{c}_t$. Denoting $f(\vect{v}, \vect{b}) \defeq \frac{\vect{v} \odiv \vect{b}}{\|\vect{v} \odiv \vect{b}\|_1}$, we have the following theorem:
\begin{theorem}
\label{thmEmpiricalWeights}
We have the almost-sure convergence
\[
\lim_{k \rightarrow \infty} f\left(\frac{\vect{c}_{k}}{\|\vect{c}_{k}\|_{1}}, \vect{b}_{k}\right) = \vect{w}.
\]
\end{theorem}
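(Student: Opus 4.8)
The plan is to reduce the claim to the behaviour of the limiting per-step transmission distribution $\vect{\pi} \defeq \vect{\pi}_{\vbeta_{0}} = \lim_{t\to\infty}\prob_{\vbeta_{0},t}$, whose existence is guaranteed by the generalized P\'olya urn theory invoked earlier (and whose entries are strictly positive on a strongly connected graph). Write $p_t(i) \defeq \prob_{\vbeta_{0},t}(i) = \frac{b_t(i)\,w(i)}{S_t}$ with $S_t \defeq \sum_{e=1}^m b_t(e)\,w(e)$, so that $p_t \to \vect{\pi}$ almost surely. Since $\vect{v}\mapsto f(\vect{v},\vect{b})$ is invariant under positive rescaling of $\vect{v}$, one has $f\!\left(\vect{c}_k/\|\vect{c}_k\|_1,\vect{b}_k\right) = f(\vect{c}_k,\vect{b}_k)$, so it suffices to show that $\vect{c}_k \odiv \vect{b}_k$ converges almost surely, coordinatewise, to a positive multiple of $\vect{w}$; $\ell_1$-normalization then returns exactly $\vect{w}$ because $\|\vect{w}\|_1 = 1$.

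First I would prove a law of large numbers for the raw counts: for each edge $i$, the process $\sum_{t=2}^{k}\big(\mathbf{1}\{e_t = i\} - p_t(i)\big)$ is an $\{\field_k\}$-martingale with increments bounded by $1$, so by the martingale strong law (apply the martingale convergence theorem to $\sum_t t^{-1}(\mathbf{1}\{e_t=i\}-p_t(i))$, whose conditional variances are summable, followed by Kronecker's lemma) its Ces\`aro average tends to $0$ a.s.; combined with $p_t(i)\to\pi(i)$ and Ces\`aro summation this gives $c_k(i)/k \to \pi(i)$ a.s. Next I would transfer this to the source counts via the exact bookkeeping identity: for $i = (u,v)$ a ball of colour $i$ is added to the urn precisely when $u$ is (re)infected, so $b_k(i) = b_k(u) = \mathbf{1}\{v_1 = u\} + \sum_{f = (\cdot,u)} c_k(f)$, where the sum runs over edges ending at $u$. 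Dividing by $k$ gives $b_k(i)/k \to \mu(u) \defeq \sum_{f=(\cdot,u)}\pi(f)$ a.s. Here I would invoke strong connectivity and positivity of $\vect{\pi}$ to conclude $\mu(u) > 0$ for every $u$ (each vertex has an incoming edge of positive weight), so in particular $b_k(i)\to\infty$ and the ratios $c_k(i)/b_k(i)$ are eventually well defined.

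Finally I would identify the limit explicitly. Dividing numerator and denominator of $p_t(i) = b_t(i)w(i)/S_t$ by $t$ and passing to the limit, using the previous step and $S_t/t \to \bar S \defeq \sum_{j=1}^m \mu(u_j)\,w(j) \in (0,\infty)$, yields $\pi(i) = \mu(u)\,w(i)/\bar S$, i.e.\ $\pi(i)/\mu(u) = w(i)/\bar S$. Hence
\[
(\vect{c}_k \odiv \vect{b}_k)(i) = \frac{c_k(i)/k}{b_k(i)/k} \;\longrightarrow\; \frac{\pi(i)}{\mu(u)} = \frac{w(i)}{\bar S} \qquad\text{a.s.,}
\]
so $\vect{c}_k\odiv\vect{b}_k \to \vect{w}/\bar S$ coordinatewise; applying $\ell_1$-normalization (continuous since the limit is nonzero) gives $f(\vect{c}_k,\vect{b}_k) \to \frac{\vect{w}/\bar S}{\|\vect{w}/\bar S\|_1} = \vect{w}$, as claimed.

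The routine part is Step 1 and the bookkeeping in Step 2; the real substance is imported rather than proved: everything rests on the almost-sure convergence $\prob_{\vbeta_{0},t}\to\vect{\pi}$ to a \emph{strictly positive, non-random} limit — that is, on the generalized P\'olya urn theory together with strong connectivity ruling out any $\mu(u) = 0$, which is exactly what makes the denominators in Step 3 legitimate in the limit and what makes $\vect{c}_k\odiv\vect{b}_k$ stabilize. If one wanted a self-contained proof, the technical heart would be showing directly that $b_t(i)/t$ (equivalently $c_t(i)/t$) converges to a positive limit for every edge $i$.
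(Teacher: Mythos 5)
Your proof is correct, and it takes a somewhat more self-contained route than the paper's. Both arguments rest on the same imported ingredient — the almost-sure convergence of the conditional transmission law $\prob_{\vbeta_0,t}$ to the strictly positive Perron vector $\vect{\pi}$ (Lemma~\ref{lemmaUrnConvergence}) — but they diverge afterwards. The paper cites the Athreya--Karlin urn/branching results again to assert $\vect{c}_k/\|\vect{c}_k\|_1 \to \vect{\pi}$, obtains the existence of $\vect{b}_\infty=\lim \vect{b}_k/\|\vect{b}_k\|_1$ and the relation $\vect{\pi}=(\vect{b}_\infty\circ\vect{w})/(\vect{b}_\infty^{T}\vect{w})$ through Lemma~\ref{propPiWeights} (via the ratios $b_t(i)/b_t(m)$ expressed in terms of $\prob_{\vbeta_0,t}$), and then concludes $\vect{w}_\infty=\vect{w}$ by a uniqueness argument (Lemma~\ref{lemmaUniqueB}, resting on the linear-algebraic Lemma~\ref{lemmaLinearUnique}). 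You instead (i) derive $c_k(i)/k\to\pi(i)$ directly from $p_t\to\vect{\pi}$ by a bounded-martingale strong law plus Ces\`aro averaging — essentially Lemmas~\ref{thmLLNMartingales} and~\ref{lemmaCesaros}; this is arguably a cleaner justification, since the cited Theorem~\ref{thm1Athreya} concerns ball counts rather than draw counts; (ii) get the limit of $\vect{b}_k/k$ from the exact bookkeeping identity $b_k(u)=\mathbf{1}\{v_1=u\}+\sum_{f=(\cdot,u)}c_k(f)$, in place of Lemma~\ref{propPiWeights}; and (iii) identify the limit by solving $\pi(i)=\mu(u)w(i)/\bar S$ directly and renormalizing, which makes the uniqueness lemma unnecessary. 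What each approach buys: the paper's version isolates reusable statements (Lemma~\ref{propPiWeights} and the fixed-point relation~\eqref{eqnPiWeights} are used again in Section~\ref{subsecFixedPointEquation}), while yours is more elementary and makes explicit exactly where strict positivity of $\vect{\pi}$ and strong connectivity enter (positivity of every $\mu(u)$, hence eventual positivity of $\vect{b}_k$ and legitimacy of the entrywise divisions and of passing to the limit inside the $\ell_1$-normalization) — points the paper leaves implicit.
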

The proof of Theorem~\ref{thmEmpiricalWeights} is contained in Appendix~\ref{AppWithoutOrder}. A key fact is that 
$\vect{c}_k / \|\vect{c}_k\|_1 \asto \vect{\pi}_{\vbeta_0}$, almost surely, from P\'{o}lya urn theory.

This motivates the following algorithm. Note that the algorithm uses the vectors $\vect{b}_{k+1}$ and $\vect{c}_{k+1}$, which contain the cumulative infection information over all $k$ time steps.

\begin{algorithm}[!h]
\SetKwInOut{Input}{Input}
\Input{Vectors $\vect{b}_{k+1}$ and \(\vect{c}_{k + 1}\).}
Compute the renormalized vector $\tilde{\vect{c}} = \frac{\vect{c}_{k+1}}{\|\vect{c}_{k+1}\|_1}$.

Compute $\vect{w} = \frac{\tilde{\vect{c}} \odiv \vect{b}_{k+1}}{\|\tilde{\vect{c}} \odiv \vect{b}_{k+1}\|_1}$.

Return $\vect{w}$.
\caption{Weight estimate via limiting distribution}
\label{algLimitingEstimate}
\end{algorithm}

Unfortunately, a method for obtaining confidence intervals or performing other statistical inference procedures based on the estimates obtained by Algorithm~\ref{algLimitingEstimate} appears to be more complicated than in the case of the maximum likelihood estimator.


\subsection{Using a fixed-point equation}
\label{subsecFixedPointEquation}

The second method leverages the observation that the limiting distribution $\vect{\pi}_{\vbeta_0}$ is the leading left eigenvector of the weight matrix \(W(\vbeta_0)\), when normalized to be a probability distribution (cf.\ Lemma~\ref{lemmaUrnConvergence} in Appendix~\ref{appPolyaUrns}). Since the weight matrix \(W\) only depends on \(\vect{w}\) and the topology of \(\graphset\), which we assume is fixed,
we can denote the desired left eigenvector by \(\vect{\pi}_{\vect{w}}\). Furthermore, by Lemma~\ref{propPiWeights} in Appendix~\ref{appPolyaUrns}, the true weights must then satisfy
\begin{equation}
\vect{\pi}_{\vect{w}}
=
\frac{\vect{b}_{\infty} \circ \vect{w}}{\vect{b}_{\infty}^{T} \vect{w}},
\label{eqnPiFP}
\end{equation}
where $\vect{b}_\infty = \lim_{k \rightarrow \infty} \vect{b} / \|\vect{b}_k\|_1$.
One way to view equation~\eqref{eqnPiFP} is as a fixed-point equation, which we may attempt to solve in an iterative manner: Given some \(\vect{w}\), we can compute the weight matrix \(W\) and its eigenvector \(\vect{\pi}_{\vect{w}}\). By using \(\vect{b}_{\infty}\) (or a finite-sample estimate of this quantity), we can compute a weight vector \(\vect{w}'\) from equation~\eqref{eqnPiFP}. An algorithm based on this idea is provided in Algorithm~\ref{algIterativeEstimate}, where we use $\vect{v}_0$ to denote the initial guess for the distribution $\vect{\pi}$. 
Again, the algorithm uses the vector $\vect{b}_{k+1}$, which contains the cumulative infection information over all $k$ time steps.

\begin{algorithm}[!h]
\SetKwInOut{Input}{Input}
\Input{Vectors \(\vect{b}_{k+1}\) and \(\vect{v}_{0}\), number of iterations \(T\).}
\For{$i = 1, \ldots, T$}{
Compute \(\vect{w}_{i}\) from \(\vect{b}_{k+1}\) and \(\vect{v}_{i - 1}\) using Theorem~\ref{thmEmpiricalWeights}: $\vect{w}_i = \frac{\vect{v}_{i-1} \odiv \vect{b}_{k+1}}{\|\vect{v}_{i-1} \odiv \vect{b}_{k+1}\|_1}$.

Compute the matrix $W_i = W(\vect{w}_{i})$.

Compute the normalized leading left eigenvector \(\vect{\pi}_{\vect{w}_{i}}\) of \(W_i\) and define this to be \(\vect{v}_{i}\).}

Return $\frac{\vect{v}_T \odiv \vect{b}_{k+1}}{\|\vect{v}_T \odiv \vect{b}_{k+1}\|_1}$.
\caption{Iterative solution to equation~\eqref{eqnPiFP}}
\label{algIterativeEstimate}
\end{algorithm}

Some natural questions are whether equation~\eqref{eqnPiFP} always has a fixed point with good statistical properties, and whether Algorithm~\ref{algIterativeEstimate} converges to such a fixed point. Unfortunately, such an analysis appears to be beyond the scope of this paper. A simulation study is provided in the experiments of Section~\ref{secSims}.

A natural initialization for Algorithm~\ref{algIterativeEstimate} is the empirical proportion of edge transmission counts \(\vect{v}_0 = \vect{c}_{k+1} / \|\vect{c}_{k+1}\|_{1}\). In fact, we can show that this initialization gives rise to a fixed point in a specific case:


\begin{theorem}
Let \(\graphset\) be the directed cyclic graph on \(n\) vertices,
and consider the associated urn scheme.
The empirical distribution
\[
\vect{\pi}_k
=
\frac{\vect{c}_{k + 1}}{\|\vect{c}_{k + 1}\|_{1}}
\]
leads to a fixed point of the equation
\[
\vect{\pi}_{\vect{w}}
=
\frac{(\vect{b}_{k + 1} - \vect{b}_{2}) \circ \vect{w}}{(\vect{b}_{k + 1} - \vect{b}_{2})^{T} \vect{w}},
\]
where the weights are given by
\begin{align*}
\vect{w} & = f(\vect{\pi}_k, \vect{b}_{k+1} - \vect{b}_2) \\
& =
\left(
\frac{b_{k + 1}(2) - b_{2}(2)}{b_{k + 1}(1) - b_{2}(1)}, \ldots, \frac{b_{k + 1}(n) - b_{2}(n)}{b_{k + 1}(n - 1) - b_{2}(n - 1)}, \, \frac{b_{k + 1}(1) - b_{2}(1)}{b_{k + 1}(n) - b_{2}(n)}\right)^T.
\end{align*}
\label{theoremCyclic}
\end{theorem}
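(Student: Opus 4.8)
The plan is to verify directly that the proposed weight vector $\vect{w} = f(\vect{\pi}_k, \vect{b}_{k+1} - \vect{b}_2)$ satisfies the stated fixed-point equation, using the special structure of the directed cycle $\graphset$. First I would recall that on the directed cycle $C_n$ the edge set is $\{(1,2), (2,3), \ldots, (n-1,n), (n,1)\}$, so each vertex has exactly one out-edge and one in-edge. Consequently, whenever vertex $i$ is infected, the \emph{only} edge across which it can transmit is edge $i$ (identifying edge $i$ with $(i, i+1)$, indices mod $n$). This means the transmission-count vector and the infection-count vector are essentially the same object: $c_{k+1}(i)$ equals $b_{k+1}(i+1) - [\text{initial condition}]$, since every infection of vertex $i+1$ (after the initial seed) was caused by a transmission across edge $i$. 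Making the bookkeeping precise — accounting for the seed vertex via the $\vect{b}_2$ correction term — is the first concrete step; this is what produces the formula $\vect{w} = \bigl(\frac{b_{k+1}(2) - b_2(2)}{b_{k+1}(1) - b_2(1)}, \ldots\bigr)^T$, i.e., $c_{k+1}(i) \propto b_{k+1}(i+1) - b_2(i+1)$ after suitable normalization absorbed into the arbitrary scale of $\vect{w}$.

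Next I would compute the right-hand side of the fixed-point equation with this $\vect{w}$. Writing $\vect{d} \defeq \vect{b}_{k+1} - \vect{b}_2$ for brevity, the claimed weight is $w(i) = d(i+1)/d(i)$ (cyclically), so the $i$-th entry of $\vect{d} \circ \vect{w}$ is $d(i) \cdot d(i+1)/d(i) = d(i+1)$. Hence $\vect{d} \circ \vect{w}$ is just $\vect{d}$ with its coordinates cyclically shifted by one, and $(\vect{d} \circ \vect{w})^T \vect{1} = \sum_i d(i+1) = \sum_i d(i) = \|\vect{d}\|_1$. Therefore
\[
\frac{\vect{d} \circ \vect{w}}{\vect{d}^T \vect{w}}
=
\Bigl(\frac{d(2)}{\|\vect{d}\|_1}, \frac{d(3)}{\|\vect{d}\|_1}, \ldots, \frac{d(1)}{\|\vect{d}\|_1}\Bigr)^T,
\]
which is precisely the cyclic shift of $\vect{d}/\|\vect{d}\|_1$. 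It then remains to check that this equals $\vect{\pi}_k = \vect{c}_{k+1}/\|\vect{c}_{k+1}\|_1$, which follows from the identification $c_{k+1}(i) = d(i+1)$ established in the first step (again up to the normalization, which cancels). So the fixed-point identity reduces to the combinatorial bookkeeping between transmission counts and shifted infection counts.

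The main obstacle I anticipate is not any deep probabilistic fact — the cycle is simple enough that no P\'olya urn limit theorem is needed for the \emph{statement} itself — but rather getting the index arithmetic and the edge-cases around the seed vertex exactly right, so that the $\vect{b}_2$ subtraction is justified on every coordinate. Specifically, the seed $v_1$ contributes to $\vect{b}_2$ but its edge was never "used up" as a transmission in the same way, so one must argue carefully that $c_{k+1}(i) = b_{k+1}(i+1) - b_2(i+1)$ holds uniformly, including when $v_1 = i+1$. I would handle this by a direct induction on $k$: each time step $t$ picks some infected vertex $u_t$, transmits across edge $u_t$, and increments both $c(u_t)$ and $b(v_t) = b(u_t + 1)$ by one, so the invariant $c_t(i) = b_t(i+1) - b_2(i+1)$ is preserved; the base case $t = 2$ holds because before time $2$ no transmissions have occurred and $\vect{c}_2 = \vect{0} = \vect{b}_2 - \vect{b}_2$ shifted. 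Once this invariant is in hand, all the displayed algebra above is routine, and the theorem follows.
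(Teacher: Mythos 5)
Your bookkeeping and algebra are correct as far as they go: on the directed cycle each vertex has a single out-edge, so $c_{k+1}(i) = b_{k+1}(i+1) - b_2(i+1)$, and writing $\vect{d} = \vect{b}_{k+1} - \vect{b}_2$ with $w(i) = d(i+1)/d(i)$ gives $\vect{d} \circ \vect{w}$ equal to the cyclic shift of $\vect{d}$, hence $\frac{\vect{d} \circ \vect{w}}{\vect{d}^{T}\vect{w}} = \vect{c}_{k+1}/\|\vect{c}_{k+1}\|_1 = \vect{\pi}_k$. This reproduces the first, easy half of the paper's argument (the paper dispatches it in one sentence).

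The genuine gap is that you have read $\vect{\pi}_{\vect{w}}$ as if it were merely a label for the right-hand side of the displayed equation, so your verification reduces to inverting the map $f$ — essentially a tautology, since $f(\vect{v},\vect{b})$ is defined precisely so that $\vect{b} \circ f(\vect{v},\vect{b})$ is proportional to $\vect{v}$. In the notation of Section~\ref{subsecFixedPointEquation} and equation~\eqref{eqnPiFP}, $\vect{\pi}_{\vect{w}}$ denotes the \emph{normalized leading left eigenvector of the weight matrix $W(\vect{w})$} built from $\vect{w}$ and the graph topology; the fixed-point claim (the one Algorithm~\ref{algIterativeEstimate} iterates on) is that if you form $\vect{w} = f(\vect{\pi}_k, \vect{b}_{k+1} - \vect{b}_2)$, construct $W(\vect{w})$, and extract its leading left eigenvector, you recover $\vect{\pi}_k$. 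Your proposal never performs this eigenvector computation, and your remark that the main difficulty is only "index arithmetic" confirms the misreading. The paper's proof supplies exactly the missing piece: it writes out the cyclic weight matrix $\tilde{W}$ with nonzero entries $c_{k+1}(i+1)/c_{k+1}(i)$, verifies $\tilde{\vect{\pi}}^{T}\tilde{W} = \tilde{\vect{\pi}}^{T}$ for $\tilde{\vect{\pi}} = \vect{c}_{k+1}$, and then computes the characteristic polynomial $\det(\lambda I - \tilde{W}) = \lambda^{n} - 1$ to conclude that the eigenvalue $1$ is maximal, so that $\vect{\pi}_k$ really is the Perron eigenvector $\vect{\pi}_{\vect{w}}$. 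Without both the eigenvector identity and the maximality check, the theorem is not established; adding those two steps to your write-up would close the gap.
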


The proof of Theorem~\ref{theoremCyclic} is contained in Appendix~\ref{subsecWOOProofs}. Note that as $k \rightarrow \infty$, this means $\vect{\pi}_k$ is an approximate fixed point of equation~\eqref{eqnPiFP}.

\begin{remark}
The key fact underlying the analysis of Theorem~\ref{theoremCyclic} is that \(b_{k + 1}(i + 1) - b_{2}(i + 1)\) counts the number of times that infection has spread across edge \((i, i + 1)\), since subtracting \(b_{2}(i + 1)\) merely corresponds to ignoring the ball (edge) used to initialize the process. Thus, we have \(c_{k + 1}(i) = b_{k + 1}(i + 1) - b_{2}(i + 1)\). In other graphs besides the cyclic graph, such a simple relation may not hold.
\end{remark}

Theorem~\ref{theoremCyclic} provides one example where our separate intuitions for using the empirical distribution of infection spreads and for finding a solution to our fixed point equation nearly coincide.


\section{Simulations}
\label{secSims}

To test our methods, we conducted simulations using a variety of parameter settings.
We considered network topologies corresponding to a directed cycle graph of various sizes, both with and without self-loops on the vertices. We took \(\vbeta_0\) to be equal to the \(d\)-dimensional vector of ones in the case of no self-loops, and the \((d + 1)\)-dimensional vector of ones in the case of self-loops, for different values of $d$. We also varied the infection count $k$. For each set of parameters, we simulated several infections using edge weights drawn independently and identically from a normal distribution, with the exception of self-loop parameters. We then calculated the empirical distribution estimator, fixed point estimator, maximum likelihood estimator, and general weights estimator for each simulated infection.
In many cases, the fixed point estimator could not be calculated because of the nonexistence of the required Perron eigenvector, which is only guaranteed to emerge for sufficiently large $k$; thus, most of the comparisons we report are between the other three estimators.

The results of the simulations, together with full details of the implementations, are reported in Appendix~\ref{appSimulationResults}. Tables~\ref{tableCycle5075RMSE}-\ref{tableCycle5075time} provide results for the cycle graph without self-loops; results for the cycle graph with self-loops are provided in Tables~\ref{tableCycleLoops5075RMSE}-\ref{tableCycleLoops5075time}. Table~\ref{tableCycle5075RMSE} reports the accuracy of our estimates, measured according to the root mean squared error between the estimate \(\hat{\vbeta}\) and the true \(\vbeta_0\). As discussed in Section~\ref{SecGeneral}, for the methods other than maximum likelihood, we compute the error after projecting our estimates of the edge weights back into \(\reals^{d}\). In general, the maximum likelihood estimator performs the best, which is unsurprising.

Next, we compare the performance of the asymptotic confidence intervals given by the maximum likelihood estimator. We approximate the asymptotic information \(I_{\infty}(\vbeta_0)\) by the empirical average
\begin{equation}
\hat{I}_{\infty}(\hat{\vbeta}_{k})
=
\frac{1}{k} \sum_{t = 2}^{k} I_{t}(\hat{\vbeta}_{k}),
\label{eqnInformationEstimate}
\end{equation}
which is known to converge almost surely to \(I_{\infty}(\vbeta_0)\). However, in some cases---particularly those of high \(n\) and \(d\) or low \(k\)---the matrix $\hat{I}_{\infty}(\hat{\vbeta}_k)$ is nearly singular, so inverting the matrix to compute confidence intervals results in numerical errors.
In order to gauge the presence of numerical errors, we compute the proportion of runs in which negative diagonal entries are present in the calculated inverse of $\hat{I}_{\infty}(\hat{\vbeta}_k)$, which is supposed to be positive semidefinite.
We also provide the proportion of confidence intervals that cover the true \(\vbeta_0\). Finally, we compute the average length of confidence intervals and (for comparison) the average length of a centered interval that would be necessary to cover \(\vbeta_0\) in \(95\%\) of the simulations.

As seen in Tables~\ref{tableCycle50CI} and~\ref{tableCycle75CI}, the empirical coverage of our computed confidence intervals is noticeably smaller than the target level. This is likely due to the instability of ball count proportions in a P\'{o}lya urn after relatively few draws. Indeed, the theory for our asymptotic intervals relies on almost sure convergence of the P\'{o}lya urn process to a particular limiting distribution, but for moderate values of $k$, the contents of the urn may be quite far from the asymptotic limit.
More concretely, suppose the limit were only approximately realized for values \(k > K\).
Then the estimator~\eqref{eqnInformationEstimate} might be more accurately computed by taking an average of the latter $k-K$ terms, and the scale factor of  \(k^{1/2}\) in the formula for asymptotic normality in Theorem~\ref{theoremAsNormal} should analogously be replaced by \((k - K)^{1/2}\). Of course, this observation does not contradict our theory, since \(k^{-1/2}(k - K)^{1/2} \) converges to \(1\) as $k$ tends to infinity. In practice, one might want to consider some ``burn-in" \(K\) when computing the confidence intervals. Alternatively, other estimates of the inverse information matrix that avoid the numerical accuracies issues arising from the expression in equation~\eqref{eqnInformationEstimate} might provide more accurate confidence intervals.

The final item of interest is computation time, provided in Table~\ref{tableCycle5075time}. In general, the maximum likelihood estimator takes the longest time to compute; this is particularly noticeable for large values of \(n\), \(d\), and \(k\). In contrast to other methods, the computation time for the empirical distribution estimator and the general weights estimator does not appear to depend too much on the dimension \(d\). All simulations and calculations were conducted in Python; the maximum likelihood estimator and general weights computation used the SCS solver of CVXPY.

Finally, we ran experiments to compare the performance of Algorithms~\ref{algLimitingEstimate} and~\ref{algIterativeEstimate} in the absence of infection order information. The results are provided in Table~\ref{tableCycle2RMSE}. The simulations were conducted on a cycle graph of size \(n = 2\) with \(d = 1\), with different values of $k$. As noted in the simulation results, the fixed point estimator performs similarly to the empirical distribution estimator. The fixed point estimator was calculated with \(5\) iterations, which is likely sufficient for this simple case, since one can easily show that a unique fixed point exists in this case and the iterative algorithm exhibits linear convergence to this fixed point.


\section{Application to Ebola Spread}
\label{secApplications}

In this section, we describe an application of our methods to data collected from the spread of Ebola in Guinea, Sierra Leone, and Liberia from 2013-2015. The official count for the epidemic included 28,616 cases suspected or confirmed, 15,227 cases confirmed, and 11,310 total deaths \citep{CDC2016}. The dataset we analyzed contains genome sequences from 3,219 infected individuals, which allowed researchers to infer transmissions between cities and states in the region. Specifically, epidemiologists applied a Bayesian phylogeographic model to infer relationships between sequences determine parameters related to the spread of infection, concluding that there was strong evidence for the importance of five variables \citep{dudas2017}.

Due to the uncertainty in inferring phylogenetic trees, a number of possible transmission patterns were produced by the model. In our work, we considered one particular realization, with the goal of checking whether the transmission structure provides evidence for the importance of the same predictors according to the model. The transmission pattern is shown in Figure~\ref{figEbolaBW}. It consists of 319 inter-regional transmissions between 46 of the 63 regions. Of the 319 transmissions, 166 occurred between unique source-destination pairs.

\begin{figure}[!h]
\centering 
\captionsetup{width=0.8\linewidth}
\includegraphics[width=4.5in]{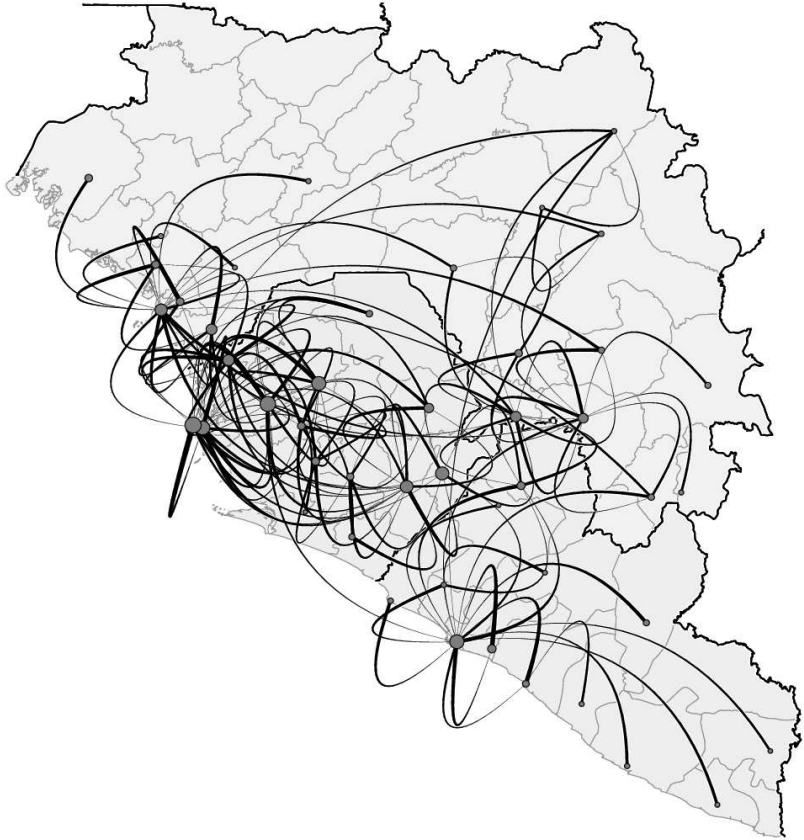}
\caption{\citep{dudas2017} The spread of Ebola in West Africa from December 2013 to November 2015. Bold lines are international borders; gray lines are regional borders. Circles are positioned at population centroids of regions affected with Ebola, and larger circles indicate more Ebola cases in the region. Arcs between circles represent inter-regional transmissions. Larger arcs represent more transmissions; the source region is the narrower end of the arc.}
\label{figEbolaBW}
\end{figure}

Following \cite{dudas2017}, we used 27 explanatory variables for the edges.
A description of the covariates is supplied in Tables~\ref{tableEbolaCovariates1} and~\ref{tableEbolaCovariates2}. We applied our model to the inter-regional transmission data, using a complete graph topology for connections between regions. Computing the maximum likelihood estimator took 5,715 minutes. The estimator, standard errors, and \(t\)-statistics are provided in Table~\ref{tableEbolaResults}. One difficulty in interpreting the results is due to the small estimated standard error. This is consistent with our simulations in Section~\ref{secSims}, where often the proper coverage is not attained. As a result, we speculate that the standard errors should all be increased by a large multiplicative factor to improve accuracy of inference.

The largest absolute \(t\)-statistics correspond to the great circle distance between two regions; the destination population; and the source population. In addition, the presence of an international border and the various international boundaries had large absolute \(t\)-statistics. This is consistent with the analysis of \cite{dudas2017}. However, there are a few notable differences: First, our model found the ``source t.t.\ 100k'' variable and ``source prec.'' variables to be relatively important. Second, one of the least significant variables for our model is the source temperature seasonality, which was regarded as relatively important for \cite{dudas2017}. Our hypothesis is that none of these three variables are particularly significant.


\section{Discussion}
\label{secDiscuss}

This work leaves a number of directions for future inquiry. The most important practical question to address is to improve the finite-sample accuracy of our confidence intervals. As demonstrated in experiments, the standard errors should be larger to ensure better coverage. As mentioned earlier, one could consider some burn-in \(K\) when computing confidence intervals;
however, this may only be feasible if the number of infection events $k$ is reasonably large. A better solution would be to bound the nonasymptotic bias in the urn process more explicitly.

In the case when the identities of infecting vertices are unknown, we have only studied the situation where the weight functions \(w(u, v)\) are constant in \(u\), which corresponds to the scenario where all neighbors similarly affect the odds of subsequent infections at \(u\).
This is a restrictive assumption, since it precludes the estimation of parameters based on observable edge covariates. Another setting of interest involves incomplete source information: In online social networks, it is sometimes the case that certain transmission edges are known explicitly through direct interactions, while others are not. Hence, a model and analysis that can accommodate a mix of infecting-infected pairs \((u_{t}, v_{t})\) pairs and standalone \(v_{t}\) could be useful.

Finally, linking to preexisting literature, introducing the P\'{o}lya urn model or a continuous-time variant in a Bayesian setting might be of interest. The goal would be to integrate the model into phylogenetic models as in \cite{dudas2017} or \cite{lemey2014}, rather than performing a two-step process as employed in our experiments, where the first step consists of segmenting the infection process. This would allow us to conduct valid statistical inference without first conditioning on the correctness of the pre-estimated network.



\section*{Acknowledgement}
The authors thank Varun Jog for helpful discussions while preparing this paper.

\appendix

\section{Proof sketches}
\label{secProofs}

\subsection{Lemmas on P\'{o}lya urns}
\label{appPolyaUrns}

In Section~\ref{subsecPolya}, we discussed the relationship between our contagion model and P\'{o}lya urns. Here, we state various key lemmas which are used throughout our analysis. Proofs are contained in Appendix~\ref{AppPolyaProofs}.

The analysis of this section applies to {\em irreducible} P\'{o}lya urns, meaning that the matrix \(W\) is irreducible. Equivalently, the matrix \(\exp(t W)\) has strictly positive entries for $t > 0$.
Our first task is to prove that the urns arising in our problem setting are indeed irreducible.

\begin{lemma}
If \(\graphset\) is strongly connected, the generalized P\'{o}lya urn defined by the replacement matrix \(W\) is irreducible.
\label{propNonTriangular}
\end{lemma}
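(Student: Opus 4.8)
The plan is to recast irreducibility of $W$ as strong connectivity of an associated digraph and then transfer strong connectivity from $\graphset$ to that digraph. Recall that a nonnegative matrix $W$ is irreducible precisely when the digraph $D(W)$ on the index set $\edgeset$ --- with an arc $e \to f$ whenever $W_{ef} > 0$ --- is strongly connected; the equivalent characterization in terms of $\exp(tW)$ having strictly positive entries for $t > 0$ then follows by expanding $\exp(tW) = \sum_{k \ge 0} t^{k} W^{k} / k!$ and observing that its $(e,f)$-entry is positive iff $(W^{k})_{ef} > 0$ for some $k \ge 0$ (with $k = 0$ handling the diagonal). By the construction of $W$, the arc $e \to f$ lies in $D(W)$ exactly when $e = (u,v)$ and $f = (v,y)$ for some $u, y$ --- i.e.\ the head of $e$ is the tail of $f$ --- and $w(e) > 0$. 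At the outset I would note that any edge of weight zero may be discarded, since it corresponds to a ball that is never drawn and to an identically-zero row of $W$ (and in any case, under the exponential parametrization~\eqref{eqnExpWeight} every weight is strictly positive); thus $D(W)$ is the line digraph of the positive-weight subgraph of $\graphset$, and it suffices to prove that this digraph is strongly connected.

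First I would fix two positive-weight edges $e = (u,v)$ and $f = (x,y)$ and produce a directed walk from $e$ to $f$ in $D(W)$. Since $\graphset$ is strongly connected, there is a directed path $v = v_0 \to v_1 \to \cdots \to v_\ell = x$ in $\graphset$ whose edges $g_i = (v_{i-1}, v_i)$ all have positive weight, where we allow $\ell = 0$ when $v = x$. Then I would concatenate: $e \to g_1$ is an arc of $D(W)$ because the head of $e$ equals $v$, the tail of $g_1$, and $w(e) > 0$; likewise $g_i \to g_{i+1}$ for $1 \le i < \ell$; and $g_\ell \to f$ because the head of $g_\ell$ is $x$, the tail of $f$, and $w(g_\ell) > 0$. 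When $\ell = 0$ this collapses to the single arc $e \to f$, which is present since the head of $e$ is $v = x$. Hence any two edges are joined by a directed walk, so $D(W)$ is strongly connected and $W$ is irreducible.

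I do not anticipate a genuine obstacle here; the argument is essentially bookkeeping. The only points requiring care are to respect the orientation of the arcs induced by $W$ (the nonzero pattern is $W_{ef} = w(e)$ when $\mathrm{head}(e) = \mathrm{tail}(f)$, rather than the reverse), to invoke strong connectivity of $\graphset$ only through paths of positive-weight edges so that the corresponding entries of $W$ are genuinely nonzero, and to dispose of the degenerate cases --- the empty connecting path, a self-loop $u = v$, or a graph consisting of a single edge --- which are all immediate from the construction above.
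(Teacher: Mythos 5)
Your proposal is correct and follows essentially the same route as the paper's proof: strong connectivity of $\graphset$ yields a chain of consecutive positive-weight edges from $e$ to $f$, hence a positive entry of $W^{r}$ and thus of $\exp(tW)$, i.e.\ irreducibility. You simply spell out the line-digraph bookkeeping (and the disposal of zero-weight edges) more explicitly than the paper does.
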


Next, we have the following lemma:

\begin{lemma}
Suppose the urn \(W(\vbeta_0)\) is irreducible.
The distribution \(\prob_{\vbeta_{0}, t}\) converges almost surely to the unique strictly positive distribution \(\vect{\pi}_{\vbeta_0}\) given by the leading left eigenvector of \(W(\vbeta_{0})\).
\label{lemmaUrnConvergence}
\end{lemma}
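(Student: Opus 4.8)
\textbf{Proof plan for Lemma~\ref{lemmaUrnConvergence}.} The plan is to recognize the random conditional law $\prob_{\vbeta_0, t}$ as the $\ell_1$-normalized composition vector of an irreducible generalized P\'olya urn, and then invoke the classical almost-sure limit theorem for such urns. First I would pass from the ball-count vector $\vect{b}_t$ to the total-weight vector $\vect{y}_t$ with $y_t(e) = b_t(e)\exp(\vect{x}_e^T\vbeta_0) = b_t(e)\,w(e)$, so that by definition $\prob_{\vbeta_0, t}(e) = y_t(e)/\sum_{f}y_t(f)$; that is, $\prob_{\vbeta_0, t}$ is \emph{exactly} $\vect{y}_t/\|\vect{y}_t\|_1$. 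The process $(\vect{y}_t)$ is itself a generalized P\'olya urn: at step $t$ a coordinate $e = (u,v)$ is selected with probability proportional to $y_t(e)$, after which $w(f)$ is added to coordinate $f$ for every out-edge $f$ of $v$. The (deterministic, color-dependent) replacement rule here is related to $W(\vbeta_0)$ by conjugation with $\diag(\vect{w})$, the same bookkeeping that links the count urn, the weight urn, and $W$; in particular they share the Perron root $\lambda_1$, and strong connectivity of $\graphset$ together with Lemma~\ref{propNonTriangular} makes the urn irreducible, so by Perron--Frobenius $W(\vbeta_0)$ has a simple leading eigenvalue and a \emph{unique} strictly positive left eigenvector, which after $\ell_1$-normalization I take as $\vect{\pi}_{\vbeta_0}$.

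Next I would apply the classical limit theorem for irreducible (positively regular) generalized P\'olya urns --- the continuous-time multitype Markov branching embedding of Athreya--Karlin, equivalently Janson's functional limit theorems, or the treatment in the survey \citep{pemantle2007} --- which gives that the normalized composition vector converges almost surely to the normalized leading left eigenvector of the urn's replacement matrix. The hypotheses are trivially met: the replacement vector added at each step is nonrandom given the drawn color, hence bounded and with all moments (the only mildly nonstandard feature, that coordinates increase by the non-integer amounts $w(f)$, is permitted in this generality), and irreducibility is exactly Lemma~\ref{propNonTriangular}; moreover every vertex of a strongly connected graph has an out-edge, so $\|\vect{y}_t\|_1 \to \infty$ and the normalization is well behaved. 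Tracking the limiting vector back through the $\diag(\vect{w})$ conjugation of the first step identifies it with $\vect{\pi}_{\vbeta_0}$, the leading left eigenvector of $W(\vbeta_0)$, which is deterministic and strictly positive; since $\prob_{\vbeta_0,t} = \vect{y}_t/\|\vect{y}_t\|_1$, this is the asserted convergence. (As a by-product this also furnishes the limiting measure $\prob_{\vbeta_0,\infty}$ referenced earlier.)

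If one prefers an argument self-contained within the paper's martingale toolkit, I would instead write the stochastic-approximation recursion for $\vect{p}_t \defeq \prob_{\vbeta_0,t} = \vect{y}_t/\|\vect{y}_t\|_1$: a short computation yields $\vect{p}_{t+1} = \vect{p}_t + \gamma_t\big(h(\vect{p}_t) + \vect{\xi}_{t+1}\big)$ with step sizes $\gamma_t$ of order $1/t$, with $(\vect{\xi}_{t+1})$ a bounded martingale-difference sequence relative to $(\field_t)$, and with $h(\vect{p})$ the projection onto the probability simplex of the linear field determined by the replacement matrix (so that its zeros in the relative interior are its leading left eigenvectors). One then checks that the ODE $\dot{\vect{\phi}} = h(\vect{\phi})$ on the simplex has $\vect{\pi}_{\vbeta_0}$ as its unique relative-interior equilibrium and that this equilibrium is globally attracting there --- via a strict Lyapunov function such as the relative entropy $D(\vect{\pi}_{\vbeta_0}\,\|\,\vect{\phi})$ or a Collatz--Wielandt-type functional --- whence standard stochastic-approximation / martingale-convergence results give $\vect{p}_t \to \vect{\pi}_{\vbeta_0}$ almost surely.

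The main obstacle is not analytic but bookkeeping: pinning down \emph{which} eigenvector (left versus right; of $W$ versus of the associated count- or weight-replacement matrices) appears in the limit, so that the statement ``leading left eigenvector of $W(\vbeta_0)$'' is both correct and consistent with the fixed-point identity~\eqref{eqnPiFP} via the relation $\vect{b}_\infty \propto \diag(\vect{w})^{-1}\vect{\pi}_{\vbeta_0}$. In the self-contained route, the comparably delicate point is establishing \emph{global} attractivity of the simplex equilibrium (the Lyapunov-function step) rather than mere local stability; everything else is routine.
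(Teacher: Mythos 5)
Your main route is essentially the paper's own proof: the paper also identifies $\prob_{\vbeta_0,t}$ with the normalized weighted composition of an irreducible generalized P\'olya urn and proves convergence via the Athreya--Karlin continuous-time branching embedding (citing Theorem~1 of \citet{athreya1968} and the split-time equivalence), the only difference being that the paper carries out the embedding and the transition-probability matching explicitly where you invoke the classical theorem as a black box. Your stochastic-approximation alternative is a genuinely different (and not fully executed) backup, but it is not needed; the primary plan matches the paper.
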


Note that when we say that the distribution converges to a deterministic vector, we identify the vector as the non-random measure over the $m$ colors, where the \(i^\text{th}\) index of the vector is the probability that the next ball drawn is of color \(i\).

We also have the following result:
\begin{lemma}
There exists a deterministic vector $\vect{b}_\infty$ with components  
\(\vect{b}_{\infty}(i) 
\defeq 
\lim_{t \rightarrow \infty} \vect{b}_{t}(i) / \|\vect{b}_{t}\|_{1}$ 
such that the distribution \(\vect{\pi}_{\vbeta_0}\) satisfies the equation
\begin{equation}
\vect{\pi}_{\vbeta_0}
=
\frac{\vect{b}_{\infty} \circ \vect{w}}{\vect{b}_{\infty}^{T} \vect{w}}.
\label{eqnPiWeights}
\end{equation}
\label{propPiWeights}
\end{lemma}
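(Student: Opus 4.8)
The plan is to combine the two previous lemmas (Lemma~\ref{lemmaUrnConvergence}, which identifies the limiting draw distribution with the normalized leading left eigenvector of $W(\vbeta_0)$, and the structure of the replacement matrix described in Section~\ref{subsecPolya}) with a direct bookkeeping argument relating ball counts $\vect{b}_t$ to the draw probabilities. First I would establish the existence of $\vect{b}_\infty$. Since the infection process is a generalized P\'{o}lya urn driven by the irreducible matrix $W(\vbeta_0)$, classical results (e.g.\ Athreya--Karlin type theorems, or the embedding into a multitype branching process as in \citep{pemantle2007}) give that the vector of ball counts, suitably normalized, converges almost surely. Concretely, if $N_i(t)$ is the number of balls of color $i$ at time $t$, then $N_i(t)/\|N(t)\|_1$ converges a.s.\ to a deterministic limit proportional to the leading left eigenvector of the relevant mean matrix; in our decoupled urn the quantity $\vect{b}_t(i)$ is exactly this ball count, so $\vect{b}_t/\|\vect{b}_t\|_1 \to \vect{b}_\infty$ a.s.\ for some deterministic probability vector $\vect{b}_\infty$.

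The second and main step is the identity~\eqref{eqnPiWeights}. The draw probability at time $t$ is, by definition of the urn dynamics and equation~\eqref{eqnModel},
\[
\prob_{\vbeta_0,t}(e) = \frac{b_t(e)\, w(e)}{\sum_{f\in\edgeset} b_t(f)\, w(f)} = \frac{\bigl(\vect{b}_t\circ\vect{w}\bigr)(e)}{\vect{b}_t^T\vect{w}},
\]
and dividing numerator and denominator by $\|\vect{b}_t\|_1$ gives
\[
\prob_{\vbeta_0,t}(e) = \frac{\bigl((\vect{b}_t/\|\vect{b}_t\|_1)\circ\vect{w}\bigr)(e)}{(\vect{b}_t/\|\vect{b}_t\|_1)^T\vect{w}}.
\]
Now take $t\to\infty$: the left-hand side converges a.s.\ to $\vect{\pi}_{\vbeta_0}$ by Lemma~\ref{lemmaUrnConvergence}, while on the right-hand side $\vect{b}_t/\|\vect{b}_t\|_1 \to \vect{b}_\infty$ by the first step, and the map $\vect{v}\mapsto (\vect{v}\circ\vect{w})/(\vect{v}^T\vect{w})$ is continuous on the probability simplex provided the denominator $\vect{b}_\infty^T\vect{w}$ is nonzero. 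Since $\vect{w}$ is entrywise positive (all edge weights are strictly positive, as $\graphset$ is strongly connected) and $\vect{b}_\infty$ is a nonzero probability vector, we have $\vect{b}_\infty^T\vect{w} > 0$, so the limit passes through and yields exactly~\eqref{eqnPiWeights}.

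The main obstacle is the first step: rigorously justifying the almost-sure convergence $\vect{b}_t/\|\vect{b}_t\|_1 \to \vect{b}_\infty$ to a \emph{deterministic} limit. This requires invoking the correct convergence theorem for generalized P\'{o}lya urns with non-uniform ball weights — the decoupling between counts and weights means the standard statements in \citep{pemantle2007} do not apply verbatim, and one must either reduce to a standard urn via a time-change / embedding argument or verify the hypotheses (irreducibility, which we have from Lemma~\ref{propNonTriangular}, plus a spectral condition ensuring the limit is deterministic rather than a genuine random variable). I expect this to parallel the argument already used to prove Lemma~\ref{lemmaUrnConvergence}, so the bulk of the work is citing and adapting that machinery; once $\vect{b}_\infty$ exists and is deterministic, the remainder is the short continuity argument above.
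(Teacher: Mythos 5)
Your second step (passing to the limit in $\prob_{\vbeta_0,t} = (\vect{b}_t\circ\vect{w})/(\vect{b}_t^T\vect{w})$ by continuity of $\vect{v}\mapsto(\vect{v}\circ\vect{w})/(\vect{v}^T\vect{w})$) is fine, but the part you flag as ``the main obstacle'' --- the almost-sure convergence of $\vect{b}_t/\|\vect{b}_t\|_1$ to a \emph{deterministic} $\vect{b}_\infty$ --- is precisely half of what the lemma asserts, and your proposal leaves it unproven: you defer it to Athreya--Karlin-type results or a re-run of the branching-process embedding, while conceding that the standard statements do not apply verbatim to the decoupled count/weight urn. As written this is a genuine gap. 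It is also slightly off target in its description: the limit of the normalized \emph{counts} is not the leading left eigenvector of $W(\vbeta_0)$ (that eigenvector is $\vect{\pi}_{\vbeta_0}$, the limit of the normalized weighted counts $b_t(i)w_{\vbeta_0}(i)$); the count limit is $\vect{\pi}_{\vbeta_0}\odiv\vect{w}$ after renormalization.

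The gap closes with no new urn theory: simply invert the identity you already wrote. Since $w_{\vbeta_0}(e)=\exp(\vect{x}_e^T\vbeta_0)>0$ for every edge, the relation $\prob_{\vbeta_0,t}\propto\vect{b}_t\circ\vect{w}$ gives
\[
\frac{b_t(i)}{b_t(m)}
=
\frac{w_{\vbeta_0}(m)}{w_{\vbeta_0}(i)}\cdot\frac{\prob_{\vbeta_0,t}(i)}{\prob_{\vbeta_0,t}(m)},
\]
so $\vect{b}_t/\|\vect{b}_t\|_1$ is a fixed continuous function of $\prob_{\vbeta_0,t}$ (divide entrywise by $\vect{w}$ and renormalize). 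Lemma~\ref{lemmaUrnConvergence}, together with the strict positivity of $\vect{\pi}_{\vbeta_0}$ guaranteeing the ratios are eventually well defined, then yields the almost-sure convergence of $\vect{b}_t/\|\vect{b}_t\|_1$ to the deterministic vector $\vect{b}_\infty\propto\vect{\pi}_{\vbeta_0}\odiv\vect{w}$, and equation~\eqref{eqnPiWeights} follows by rearranging (equivalently, by your continuity step). This is exactly the paper's argument: the existence of $\vect{b}_\infty$ is a two-line corollary of Lemma~\ref{lemmaUrnConvergence}, not a separate application of branching-process machinery.
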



\subsection{Asymptotic theory}
\label{SecAsymp}

We borrow the general proof strategy from \cite[Chapter~6, Theorem~5.1]{lehmann2006} and adapt it to our setting.

To prove consistency, we wish to show that there is a sequence of maximizers of the log-likelihood that converge to the true \(\vbeta\).
We want to show that \(\ell(\vbeta) < \ell(\vbeta_{0})\) for all \(\vbeta\) in the spherical shell of radius \(r\) around \(\vbeta_{0}\).
Then, by the continuity of the log-likelihood,
the maximizer \(\hat{\vbeta}_{k}\) must be within the interior of the ball of radius \(r\) centered at \(\vbeta_{0}\). This is summarized in the following lemma, proved in Appendix~\ref{AppLemBall}:

\begin{lemma}
\label{LemBall}
Let
\begin{equation*}
S_{k}(r) 
\defeq
\Big\{\ordered_{k}: \ell(\vbeta_{0} + \vect{y}; \ordered_{k}) < \ell(\vbeta_{0}; \ordered_{k}) \text{ where }
\|\vect{y}\|_2 = r \Big\}.
\end{equation*}
There exists $r_0 > 0$ such that for all $r \le r_0$, we have
\begin{equation*}
\lim_{k \rightarrow \infty} \prob_{\vbeta_0}\big(S_k(r)\big) = 1.
\end{equation*}
\end{lemma}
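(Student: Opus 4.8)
\textbf{Proof proposal for Lemma~\ref{LemBall}.}

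The plan is to expand the log-likelihood difference $\ell(\vbeta_0 + \vect{y}; \ordered_k) - \ell(\vbeta_0; \ordered_k)$ in a second-order Taylor expansion around $\vbeta_0$ and show that, uniformly over the sphere $\|\vect{y}\|_2 = r$ for $r$ small, the quadratic term dominates the linear term with probability tending to $1$. Write $g(\vect{y}) = \ell(\vbeta_0 + \vect{y}; \ordered_k) - \ell(\vbeta_0; \ordered_k)$. Then
\[
g(\vect{y}) = \vect{y}^T \nabla \ell(\vbeta_0) + \tfrac{1}{2} \vect{y}^T H\big(\ell(\tilde{\vbeta})\big) \vect{y},
\]
where $\tilde{\vbeta}$ lies on the segment between $\vbeta_0$ and $\vbeta_0 + \vect{y}$. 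By Proposition~\ref{propLogLikelihoodHessian}, $H(\ell(\vbeta)) = -\sum_{t=2}^k C_t(\vbeta)$, so the Hessian term is negative (semi)definite; the main work is to show three things: (a) $\frac{1}{k}\nabla \ell(\vbeta_0) \to 0$ almost surely, i.e., the (per-step) score has vanishing average; (b) $\frac{1}{k}\sum_{t=2}^k C_t(\tilde{\vbeta})$ is bounded below by a positive-definite matrix, uniformly for $\tilde\vbeta$ in a small ball, for $k$ large; and (c) the quadratic term's size $\Theta(k r^2)$ beats the linear term's size $o(k) \cdot r$ once $r$ is fixed and $k \to \infty$.

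For step (a), I would note that the score is a martingale: $\nabla \ell(\vbeta_0) = \sum_{t=2}^k (Z_t - \expect_{\vbeta_0}[Z_t \mid \field_{t-1}])$, where $Z_t$ is the covariate vector of the drawn edge $e_t$ and $\expect_{\vbeta_0}[Z_t \mid \field_{t-1}]$ is the $\prob_{\vbeta_0,t}$-mean. This is a sum of bounded martingale differences (bounded because the covariates $\vect{x}_e$ range over a finite set), so by the martingale strong law of large numbers $\frac{1}{k}\nabla \ell(\vbeta_0) \to 0$ almost surely. For step (b), I would use Lemma~\ref{lemmaUrnConvergence} and Lemma~\ref{propPiWeights}: since $\graphset$ is strongly connected, the urn is irreducible (Lemma~\ref{propNonTriangular}), the ball counts satisfy $\vect{b}_t / \|\vect{b}_t\|_1 \to \vect{b}_\infty$ with $\vect{b}_\infty$ strictly positive, and the sampling distribution $\prob_{\vbeta_0,t} \to \vect{\pi}_{\vbeta_0}$; hence $C_t(\vbeta_0) \to C_\infty(\vbeta_0) := \cov(Z_\infty, Z_\infty)$, the covariance under the limiting distribution $\vect{\pi}_{\vbeta_0}$. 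Under the hypotheses $\rank(X) = d$ and $X\vect{v} \neq \vect{1}$, the reasoning behind Propositions~\ref{propLogLikelihoodHessian} and~\ref{propCovRanks} gives $\rank(C_\infty(\vbeta_0)) = d$, so $C_\infty(\vbeta_0) \succeq \lambda_0 I$ for some $\lambda_0 > 0$; a continuity/compactness argument in $\vbeta$ then extends this lower bound (with $\lambda_0/2$, say) to a neighborhood of $\vbeta_0$, and Cesàro convergence gives $\frac{1}{k}\sum_{t=2}^k C_t(\tilde\vbeta) \succeq \frac{\lambda_0}{4} I$ for $k$ large, uniformly over that neighborhood (here I use that the per-step covariances $C_t$ depend continuously on both $\vbeta$ and the normalized ball-count vector, and that the latter converges).

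Combining, on the event where the above convergences have ``kicked in'' (which has probability $\to 1$), we get, uniformly over $\|\vect{y}\|_2 = r$,
\[
g(\vect{y}) \le \|\vect{y}\|_2 \cdot \|\nabla\ell(\vbeta_0)\|_2 - \tfrac{\lambda_0}{8} k \|\vect{y}\|_2^2 = r\, o(k) - \tfrac{\lambda_0}{8} k r^2,
\]
provided $r \le r_0$ is small enough that $\vbeta_0 + \vect{y}$ stays in the neighborhood from step (b); this is negative for $k$ sufficiently large, which is exactly the event $S_k(r)$. Thus $\prob_{\vbeta_0}(S_k(r)) \to 1$. The main obstacle I anticipate is step (b): making the uniform positive-definite lower bound on the averaged Hessian rigorous requires care, because $C_t$ depends both on the parameter $\tilde\vbeta$ (which itself varies with $\vect{y}$) and on the random, slowly converging ball-count proportions, so one needs a genuinely uniform (in $\vbeta$) statement about the limiting covariance together with the almost-sure convergence of the urn proportions, rather than just a pointwise-in-$\vbeta_0$ conclusion. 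A secondary technical point is ensuring the exceptional null sets from the several almost-sure convergences can be combined into a single null set, so that "with probability tending to $1$" is legitimate.
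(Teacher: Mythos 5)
Your argument is correct, but it takes a genuinely different route from the paper at the key technical step. The paper expands $\tfrac{1}{k}\ell(\vbeta)-\tfrac{1}{k}\ell(\vbeta_0)$ to \emph{third} order: the score term is killed by the martingale law of large numbers (Lemma~\ref{thmLLNMartingales}), exactly as in your step (a); the Hessian is evaluated \emph{at $\vbeta_0$ itself}, where the pointwise almost-sure convergence $\tfrac{1}{k}\sum_t C_t(\vbeta_0)\to I_\infty(\vbeta_0)$ (Proposition~\ref{lemmaLimitExpt} plus Lemma~\ref{lemmaCesaros}) and positive definiteness of $I_\infty(\vbeta_0)$ give a $-\alpha r^2$ contribution; and the dependence on the intermediate point is pushed entirely into the third-order remainder, which is controlled by the deterministic uniform bound $6kB^3$ on third derivatives from Remark~\ref{remarkDerivative}, yielding an $O(r^3)$ term that is dominated for $r\le r_0$ small. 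You instead stop at second order with the Hessian at an intermediate point $\tilde\vbeta$, which forces you to prove a \emph{uniform-in-$\vbeta$} positive-definite lower bound on $\tfrac{1}{k}\sum_t C_t(\tilde\vbeta)$ over a small ball; your sketch of that step is sound (the per-step covariance is a fixed jointly continuous function of the normalized ball-count vector and $\vbeta$, the ball proportions converge almost surely to a strictly positive limit by Lemma~\ref{propPiWeights}, $C_\infty(\vbeta_0)$ has rank $d$ under the standing assumptions via Proposition~\ref{propCovRanks}, and compactness plus Ces\`aro averaging upgrade this to the uniform bound), but it is precisely the extra work the paper avoids by going one derivative higher. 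The trade-off: your version needs no third-derivative bounds and makes the role of $r_0$ purely about the neighborhood on which the limiting covariance stays positive definite (the linear term is beaten by letting $k\to\infty$ at fixed $r$), whereas the paper's version gets uniformity over the sphere essentially for free from a deterministic remainder bound at the cost of cubic bookkeeping and of needing $r$ small to dominate the $r^3$ terms. Your closing caveats (making the uniform bound rigorous and intersecting the finitely many almost-sure events) are the right ones, and neither is an obstruction.
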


If \(\ordered_{k}\) is in \(S_{k}(r)\),
then by the continuity of the log-likelihood, there is a \(\hat{\vbeta}_{k}(r)\) in the ball of radius \(r\) centered at \(\vbeta_{0}\) that is a local maximum.
By the concavity of the log-likelihood, this is also the unique global maximum $\hat{\vbeta}_k^*$. Hence, 
\begin{equation*}
\prob_{\vbeta_0} \left(\|\hat{\vbeta}_k^* - \vbeta_0\|_2 \ge r\right) \le 1 - \prob_{\vbeta_0} \big(S_k(r)\big).
\end{equation*}
By Lemma~\ref{LemBall}, the limit of the right-hand side as $k \rightarrow \infty$ is equal to 0. Thus, we clearly have
\begin{equation*}
\lim_{k \rightarrow \infty} \prob_{\vbeta_0} \left(\|\hat{\vbeta}_k^* - \vbeta_0\|_2 \ge r\right) = 0,
\end{equation*}
as well. Hence, we have the convergence $\hat{\vbeta}_k^* \probto \vbeta_0$ in probability, implying consistency.


We now turn to the proof of asymptotic normality. We first expand the coordinatewise derivatives of the log-likelihood about \(\vbeta_{0}\) to obtain
\begin{align*}
\ell'_{a}(\vbeta) & = \ell'_{a}(\vbeta_{0}) + \sum_{b = 1}^{d} (\sbeta(b) - \sbeta_{0}(b)) \ell''_{ab}(\vbeta_{0}) \\
& \qquad \qquad + \frac{1}{2} \sum_{b = 1}^{d} \sum_{c = 1}^{d} (\sbeta(b) - \sbeta_{0}(b))(\sbeta(c) - \sbeta_{0}(c)) \ell'''_{abc}(\vbeta^{*}),
\end{align*}
for some \(\vbeta^{*}\) on the line segment between \(\vbeta\) and \(\vbeta_{0}\), where we have used the shorthand notation $\ell'_a(\vbeta)$ to denote the $a^{\text{th}}$ coordinate of the gradient of $\ell$, and similarly for the higher-order derivatives.

Substituting \(\vbeta = \hat{\vbeta}_{k}\) sets the left hand side to \(0\).
Rearranging, we then obtain
\begin{align*}
& \begin{aligned}
\sqrt{k} \sum_{b = 1}^{d} (\hat{\sbeta}_{k}(b) - \sbeta_{0}(b)) 
\left(\frac{1}{k} \ell''_{ab} (\vbeta_{0}) + \frac{1}{2k} \sum_{c = 1}^{d} (\hat{\sbeta}_{k}(c) - \sbeta_{0}(c)) \ell'''_{abc}(\vbeta^{*})\right)
&= 
-\frac{1}{\sqrt{k}} \ell'_{a}(\vbeta_{0}).
\end{aligned}
\end{align*}
We will apply Lemma~\ref{lemmaSolveLinear} with
\begin{align*}
& \begin{aligned}
Y_{b, k} 
&=
\sqrt{k} \left(\hat{\sbeta}_{k}(b) - \sbeta_{0}(b) \right), \\ 
A_{ab, k}
&=
\frac{1}{k} \ell''_{ab}(\vbeta_{0}) 
+ \frac{1}{2k} \sum_{c = 1}^{d} \left(\hat{\sbeta}_{k}(c) - \sbeta_{0}(c)\right) \ell'''_{abc}(\vbeta^{*}), \\ 
T_{a, k} 
&= 
- \frac{1}{\sqrt{k}} \ell'_{a}(\vbeta_{0}).
\end{aligned}
\end{align*}

We have the following lemma, proved in Appendix~\ref{AppLemConverge}:
\begin{lemma}
\label{LemConverge}
We have the convergences
\begin{align*}
T_k & \lawto \normal(\vect{0}, I_{\infty}(\vbeta_{0})), \\
A_{k} & \probto -I_{\infty}(\vbeta_{0})
\end{align*}
in distribution and in probability, respectively.
\end{lemma}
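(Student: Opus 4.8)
The key structural fact is that, evaluated at the true parameter, the gradient of the log-likelihood~\eqref{eqnExpLikelihood} is a martingale. The term $-\log n$ and the log-count terms $\log b_t(e_t)$ do not depend on $\vbeta$, so the $a^{\text{th}}$ coordinate of the score is $\ell'_a(\vbeta_0) = \sum_{t=2}^k \bigl( x_{e_t}(a) - \expect_{\vbeta_0}[\,x_{e_t}(a) \mid \field_{t-1}] \bigr)$. Setting $D_t \defeq \vect{x}_{e_t} - \expect_{\vbeta_0}[\vect{x}_{e_t} \mid \field_{t-1}]$, the sequence $(D_t)_{t\ge 2}$ is a martingale difference sequence with respect to $(\field_t)$: under $\prob_{\vbeta_0}$ the edge $e_t$ is drawn with probability proportional to $b_t(e)\exp(\vect{x}_e^T\vbeta_0)$, so $\expect_{\vbeta_0}[D_t \mid \field_{t-1}] = \vect{0}$. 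Because $e_t$ always takes values in the fixed finite set $\edgeset$, the increments are uniformly bounded: $\|D_t\|_2 \le 2\max_{e\in\edgeset}\|\vect{x}_e\|_2 \defeq M$. Thus $T_k = -k^{-1/2}\sum_{t=2}^k D_t$, and the first assertion amounts to a martingale central limit theorem for this normalized sum.

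The conditional second moment of $D_t$ is precisely the matrix $C_t$ from Proposition~\ref{propLogLikelihoodHessian} evaluated at $\vbeta_0$: $\expect_{\vbeta_0}[D_t D_t^T \mid \field_{t-1}] = \cov_t(Z_t, Z_t) = C_t$. The heart of the proof is to establish $\frac{1}{k}\sum_{t=2}^k C_t \asto I_\infty(\vbeta_0)$ almost surely. Since $\graphset$ is strongly connected, Lemma~\ref{propNonTriangular} guarantees that the urn $W(\vbeta_0)$ is irreducible, so Lemma~\ref{lemmaUrnConvergence} gives $\prob_{\vbeta_0,t} \asto \vect{\pi}_{\vbeta_0}$ almost surely. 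Because $C_t$ is a fixed continuous (indeed polynomial) function of the probability vector $\prob_{\vbeta_0,t}$ on the finite set $\edgeset$, it follows that $C_t \asto C_\infty \defeq \cov_{\vbeta_0,\infty}(Z_\infty, Z_\infty)$, and a Ces\`aro argument then yields $\frac{1}{k}\sum_{t=2}^k C_t \asto C_\infty$. One checks $C_\infty = I_\infty(\vbeta_0)$: up to a $\vbeta$-independent additive constant, $\log\prob_{\vbeta,\infty}\{e\} = \vect{x}_e^T\vbeta - \log\bigl(\sum_f b_\infty(f)\exp(\vect{x}_f^T\vbeta)\bigr)$ (cf.\ Lemma~\ref{propPiWeights}), so $\frac{\partial}{\partial\sbeta(a)}\log\prob_{\vbeta_0,\infty}\{e\} = x_e(a) - \expect_{\vbeta_0,\infty}[Z_\infty(a)]$ has mean zero under $\vect{\pi}_{\vbeta_0}$; hence the expectation of the outer product of the two coordinate scores is exactly $\cov_{\vbeta_0,\infty}(Z_\infty, Z_\infty) = C_\infty$, matching the definition of $I_\infty(\vbeta_0)$ in Theorem~\ref{theoremAsNormal}. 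Equipped with this, the martingale CLT for $T_k$ is routine: the conditional Lindeberg condition is immediate from $\|D_t\|_2 \le M$, since for any $\epsilon>0$ and all $k > M^2/\epsilon^2$ the array $\frac{1}{k}\sum_{t=2}^k \expect_{\vbeta_0}\bigl[\|D_t\|_2^2 \,\mathbf{1}\{\|D_t\|_2 > \epsilon\sqrt{k}\} \mid \field_{t-1}\bigr]$ vanishes identically, and combining this with $\frac{1}{k}\sum_{t=2}^k C_t \asto I_\infty(\vbeta_0)$ and invoking a multivariate martingale central limit theorem (via the Cram\'er--Wold device together with the scalar version) gives $k^{-1/2}\sum_{t=2}^k D_t \lawto \normal(\vect{0}, I_\infty(\vbeta_0))$; as the limit is symmetric about the origin, this is $T_k \lawto \normal(\vect{0}, I_\infty(\vbeta_0))$.

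For the second convergence, recall that $A_{ab,k} = \frac{1}{k}\ell''_{ab}(\vbeta_0) + \frac{1}{2k}\sum_{c=1}^d (\hat{\sbeta}_k(c) - \sbeta_0(c))\,\ell'''_{abc}(\vbeta^*)$. By Proposition~\ref{propLogLikelihoodHessian}, $\frac{1}{k}\ell''_{ab}(\vbeta_0) = -\frac{1}{k}\sum_{t=2}^k C_{t,ab} \asto -I_{\infty,ab}(\vbeta_0)$ by the convergence just established. The remainder is a product of $\hat{\sbeta}_k(c) - \sbeta_0(c) \probto 0$, which holds by the consistency established earlier in this appendix, and $\frac{1}{2k}\ell'''_{abc}(\vbeta^*)$, which is bounded uniformly in $\vbeta^*$: the third-order partials of the log-sum-exp terms in~\eqref{eqnExpLikelihood} are sums of $k-1$ third central moments of distributions supported on the fixed finite set $\{\vect{x}_e\}_{e\in\edgeset}$, so $|\ell'''_{abc}(\vbeta)| \le c_0\, k$ for a constant $c_0 = c_0(X)$ uniform over $\vbeta$ (in particular over the random $\vbeta^*$). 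Hence the remainder is $o_P(1)$, and $A_k \probto -I_\infty(\vbeta_0)$.

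I expect the main obstacle to be the identity $\frac{1}{k}\sum_{t=2}^k C_t \asto I_\infty(\vbeta_0)$: both the almost-sure convergence of the conditional covariances $C_t$, which rests on the P\'olya urn limit in Lemma~\ref{lemmaUrnConvergence}, and---more delicately---the verification that the limiting matrix $C_\infty$ coincides with the Fisher information defined through derivatives of the limiting measure $\prob_{\vbeta_0,\infty}$, which requires being precise about the parametric family $\vbeta \mapsto \prob_{\vbeta,\infty}$ and its differentiation at $\vbeta_0$. By contrast, the hypotheses of the martingale CLT and the control of the third-derivative remainder reduce to elementary boundedness estimates, all afforded by the covariates $\vect{x}_e$ ranging over a fixed finite set.
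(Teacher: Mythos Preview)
Your proposal is correct and follows essentially the same strategy as the paper: identify the score as a bounded martingale difference sum, establish $\frac{1}{k}\sum_{t=2}^k C_t \to I_\infty(\vbeta_0)$ via the P\'olya urn limit (Lemma~\ref{lemmaUrnConvergence}) plus Ces\`aro averaging, and handle the $A_k$ remainder via consistency and the uniform $O(k)$ bound on third partials. The only substantive difference is the martingale CLT invoked: the paper applies a continuous-time multivariate martingale CLT (Lemma~\ref{thmMultiMartingaleCLT}, from K\"uchler--S{\o}rensen) after embedding the discrete martingale as a c\`adl\`ag process and verifying its four conditions, whereas you use the more elementary route of the discrete-time Lindeberg martingale CLT combined with Cram\'er--Wold. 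Your version is arguably cleaner here, since boundedness of the increments makes Lindeberg trivial and no continuous-time embedding is needed; the paper's choice buys nothing extra in this setting but packages the multivariate statement directly. Your discussion of the identification $C_\infty = I_\infty(\vbeta_0)$ is also more explicit than the paper's, which relegates it to the display~\eqref{eqnInformation} in the proof of Lemma~\ref{LemBall}.
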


Putting this all together means that 
\[
Y
\sim 
\normal_{d}\left(\vect{0}, I_{\infty}^{-1}(\vbeta_{0})\right).
\]
This completes the proof of asymptotic normality.


\section{Proofs for Section~\ref{appPolyaUrns}}
\label{AppPolyaProofs}

\subsection{Proof of Lemma~\ref{propNonTriangular}}

Consider two edges $e = (u,v)$ and $f = (x,y)$. Since $\graphset$ is strongly connected, there exists a path of some length $r$ from $e$ to $f$. Hence, the term $t^r W^r / (r!)$ in the expansion of $\exp(tW)$ has entry $(e,f)$ strictly positive. Since all entries of $W$ are nonnegative, it is clear that the $(e,f)$ entry of $\exp(tW)$ must be strictly positive, as well.

\subsection{Proof of Lemma~\ref{lemmaUrnConvergence}}

Let \(X_{t}\) be a continuous-time branching process, where in the time interval \((t, t + dt]\), each particle of type \(i\) branches independently with probability \(w_{\vbeta_{0}}(i) dt\).
When a particle of type \(i\) branches, it is replaced by one particle of type \(i\) and one particle of each type \(j\) for which \(W(\vbeta_{0})_{ij} > 0\).
Let \(\vect{\pi}_{\vbeta_0}\) be the leading left eigenvector of \(W(\vbeta_{0})\), normalized to be a probability distribution, 
and let \(\lambda_{1}\) be the associated eigenvalue. 
Note that $\vect{\pi}_{\vbeta_0}$ is unique by the Perron-Frobenius theorem.
Define \(X'_{t} = w_{\vbeta_{0}} \circ X_{t}\).
By Theorem 1 of \cite{athreya1968}, we have the almost sure convergence of 
\(X'_{t}\exp(-t \lambda_{1})\) to  
\(X_{\infty} \vect{\pi}_{\vbeta_0}\), where \(X_{\infty}\) is a nonnegative random variable.
For completeness, we provide a statement of this theorem in Appendix~\ref{appOtherTheorems}.

By normalizing, we obtain the limiting distribution 
\[
\lim_{t \to \infty}
\frac{X'_{t} \exp(-t \lambda_{1})}{\|X'_{t} \exp(-t \lambda_{1})\|_{1}}
=
\vect{\pi}_{\vbeta_0},
\]
so it suffices to show that we can recover our urn process from the branching process via stopping times.
For this, we follow Theorem 1 of \cite{athreya1968embedding} and the ensuing discussion.

Let \(\tau_{K}\) be the \(K^\text{th}\) split time of the process \(X_{t}\).
Then,  \(\tau_{1}, \ldots, \tau_{K}\) forms an increasing sequence of stopping times.
At \(\tau_{K}\), there are \(X_{\tau_{K}}(i)\) particles of type \(i\).
Thus, the probability that the next split is of a particle of type \(i\) is
\[
\frac{w_{\vbeta_{0}}(i) X_{\tau_{K}}(i)}{\sum_{j = 1}^{m} w_{\vbeta_{0}}(j) X_{\tau_{K}}(j)}
=
\frac{X'_{\tau_{K}}(i)}{\sum_{j = 1}^{m} X'_{\tau_{K}}(j)},
\]
since all particles of a type \(j\) have independently-distributed exponential lifetimes of mean \(1 / w_{\vbeta_{0}}(j)\).
The resulting change in \(X_{t}\) is that one particle of type \(j\) is added for each \(j\) such that \(W(\vbeta_{0})_{ij} > 0\).

We now consider the analogous urn process \(Y_{K}\), which has replacement weight matrix \(W(\vbeta_{0})\).
Unlike in standard urn theory, we distinguish between the number of balls and their weights;
when we draw a ball of color \(i\), we replace it with a ball of weight \(W(\vbeta_{0})_{ij} = w_{\vbeta_{0}}(j)\), for each \(j\) for which \(w_{\vbeta_{0}} > 0\).
Again, at the \(K^\text{th}\) time, we have \(Y_{K}(i)\) balls of type \(i\) in the urn, 
and on the next draw, the probability we draw a ball of type \(i\) is 
\[
\frac{w_{\vbeta_{0}}(i) Y_{K}(i)}{\sum_{j = 1}^{m} w_{\vbeta_{0}}(j) Y_{K}(j)}.
\]
Note that the ensuing replacement is then the same as for \(X_{t}\).

Thus, the transition probabilities of \(X_{t}\) and \(Y_{t}\) are the same.
Additionally, since both processes are Markov, they are equivalent.
Since \(\tau_{K} \to \infty\) as \(K \to \infty\),
we have
\[
\prob _{\vbeta_{0}, K}(i)
=
\frac{w_{\vbeta_{0}}(i) Y_{K}(i)}{\sum_{j = 1}^{m} w_{\vbeta_{0}}(j) Y_{K}(j)}
=
\frac{w_{\vbeta_{0}}(i) X_{\tau_{K}}(i)}{\sum_{j = 1}^{m} w_{\vbeta_{0}}(j) X_{\tau_{K}}(j)}
=
\frac{X'_{\tau_{K}}(i)}{\sum_{j = 1}^{m} X'_{\tau_{K}}(j)}.
\] 
By taking limits, we see that \(\prob_{\vbeta_{0}, \infty} = \vect{\pi}_{\vbeta_0}\), as required.


\subsection{Proof of Lemma~\ref{propPiWeights}}

We first show that the limiting vector $\vect{b}_\infty$ exists. We define
\[
b'_{t}(i)
\defeq
\frac{b_{t}(i)}{b_{t}(m)} \\ 
=
\frac{w_{\vbeta_{0}}(m)}{w_{\vbeta_{0}}(i)} \cdot \frac{\prob_{\vbeta_{0}, t}(i)}{\prob_{\vbeta_{0}, t}(m)}.
\]
By Lemma~\ref{lemmaUrnConvergence}, the distribution \(\prob_{\vbeta_{0}, \infty}\) is strictly positive, almost surely. Hence, for \(t\) sufficiently large, \(\prob_{\vbeta_{0}, t}\) is also strictly positive. Taking limits, we therefore obtain
\begin{align}
\label{EqnChoco}
\lim_{t \to \infty} b'_{t}(i)
&=
\frac{w_{\vbeta_{0}}(m)}{w_{\vbeta_{0}}(i)} \cdot \frac{\prob_{\vbeta_{0}, \infty}(i)}{\prob_{\vbeta_{0}, \infty}(m)}  
\defeq b_\infty'(i),
\end{align}
where $\vect{b}_\infty'$ is deterministic. In particular, we have
\begin{equation*}
\lim_{t \rightarrow \infty} \frac{b_t(i)}{\|\vect{b}_t\|_1} 
= \lim_{t \rightarrow \infty} \frac{b'_t(i)}{\|\vect{b}'_t\|_1} 
= \frac{\lim_{t \rightarrow \infty} b'_t(i)}{\lim_{t \rightarrow \infty} \|\vect{b}'_t\|_1} 
= \frac{b'_\infty(i)}{\|\vect{b}'_\infty\|_1}.
\end{equation*}
Accordingly, we define the vector $\vect{b}_\infty = \vect{b}'_\infty / \|\vect{b}'_\infty\|_1$ to obtain the desired result.

Now note that by equation~\eqref{EqnChoco}, we have
\[
\vect{b}'_{\infty} \; \propto \; \left(\prob_{\vbeta_{0}, \infty} \odiv \, \vect{w}\right).
\]
As a result, we have
\[
\frac{\vect{b}_{\infty} \circ \vect{w}}{\vect{b}_{\infty}^{T} \vect{w}} = \prob_{\vbeta_{0}, \infty},
\]
and by Lemma~\ref{lemmaUrnConvergence}, this is equivalent to \(\vect{\pi}_{\vbeta_0}\), almost surely.


\section{Proofs of existence and uniqueness lemmas}
\label{appExistUnique}

\subsection{Proof of Theorem~\ref{propExistenceConditions}}
\label{subsecExistenceComputation}

In this section, we consider basic proofs of existence and computation for the maximum likelihood estimator.

We will use both the likelihood \(L\) and the log-likelihood \(\ell\) as is convenient.
We begin with a useful lemma:
\begin{lemma}
Given \(X\), the log-likelihood $\ell(\vect{\beta}; \ordered_k)$ has a maximum
if and only if
for all \(\vbeta\),
there is no \(\vect{v}\) such that 
\(f_{\vbeta, \vect{v}}(t) \defeq \ell(\vbeta + t \vect{v}; \ordered_k)\)
is strictly increasing.
\label{lemmaExistenceRays}
\end{lemma}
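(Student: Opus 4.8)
The plan is to exploit three features of the log-likelihood in \eqref{eqnExpLikelihood}: it is concave and continuous in $\vbeta$; it is bounded above (indeed $\ell(\vbeta;\ordered_k)\le -\log n$ for every $\vbeta$, since in each factor of \eqref{eqnModel} the denominator $\sum_e b_t(e)w(e)$ dominates the numerator); and its recession slope in a direction $\vect{v}$, namely $\lim_{t\to\infty} t^{-1} f_{\vbeta,\vect{v}}(t) = \sum_{t=2}^{k}\big(\vect{x}_{e_t}^{T}\vect{v} - \max_{e:\,b_t(e)>0}\vect{x}_e^{T}\vect{v}\big)$, does not depend on the base point $\vbeta$ and is always $\le 0$ (each summand is $\le 0$ since $b_t(e_t)>0$). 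A short computation with log-sum-exp asymptotics shows moreover that when this slope is $0$ one has $f_{\vbeta,\vect{v}}(t)=\mathrm{const}_{\vbeta}-\sum_t \log\!\big(A_{t,\vbeta}+B_{t,\vbeta}(t)\big)$, where each $B_{t,\vbeta}$ is nonnegative and is either identically zero or strictly decreasing in $t$; hence whether $f_{\vbeta,\vect{v}}$ is strictly increasing depends only on $\vect{v}$, not on the base point.

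For the forward implication, suppose $\ell$ is maximized at $\hat\vbeta$. If some $f_{\vbeta,\vect{v}}$ were strictly increasing, then by the base-point independence just noted $f_{\hat\vbeta,\vect{v}}$ would be strictly increasing too, so $\ell(\hat\vbeta+\vect{v})>\ell(\hat\vbeta)$, contradicting maximality; hence no such $\vect{v}$ exists for any $\vbeta$. For the converse I argue the contrapositive. If $\ell$ has no maximizer, pick $\vbeta_j$ with $\ell(\vbeta_j)\uparrow M:=\sup\ell$, finite by the bound above. No subsequence of $(\vbeta_j)$ can be bounded, as a limit point would be a maximizer, so $\|\vbeta_j\|\to\infty$ and, after passing to a subsequence, $\vbeta_j/\|\vbeta_j\|\to\vect{v}$ for a unit vector $\vect{v}$. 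For any base point $\vbeta_0$ and any $t>0$, the point $(1-t/\|\vbeta_j\|)\vbeta_0+(t/\|\vbeta_j\|)\vbeta_j$ is a convex combination for $j$ large, converges to $\vbeta_0+t\vect{v}$, and by concavity has $\ell$-value at least $(1-t/\|\vbeta_j\|)\ell(\vbeta_0)+(t/\|\vbeta_j\|)\ell(\vbeta_j)\to\ell(\vbeta_0)$; continuity then gives $\ell(\vbeta_0+t\vect{v})\ge\ell(\vbeta_0)$ for all $t\ge 0$. Thus $f_{\vbeta_0,\vect{v}}$ is non-decreasing on $[0,\infty)$ for every $\vbeta_0$; its recession slope is therefore $0$, and a one-dimensional concave function with zero recession slope is non-decreasing on all of $\real$. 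By the structural dichotomy above, $f_{\cdot,\vect{v}}$ is then either identically constant — which is precisely alternative (ii) of the suboptimal sampling condition, in which case one quotients out this flat direction and finishes by induction on $d$ (the quotiented log-likelihood still has no maximizer and the same supremum) — or it is strictly increasing, which is the desired conclusion.

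The step I expect to be the main obstacle is exactly this last one: a maximizing sequence only yields a direction along which $\ell$ is \emph{non-decreasing}, and promoting ``non-decreasing'' to ``strictly increasing'' requires either the reduction modulo directions of constancy just sketched or, equivalently, the special exponential-family structure of \eqref{eqnExpLikelihood}, which forces a zero-recession direction to be either strictly ascending or perfectly flat, with nothing in between. Everything else — concavity, the upper bound $-\log n$, and the convex-combination/compactness argument — is routine.
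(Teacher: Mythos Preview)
Your proof is correct and follows a genuinely different route from the paper's. For the reverse direction, the paper argues directly from concavity and $C^1$-smoothness alone: since no $f_{\vbeta,\vect{v}}$ is strictly increasing, each one attains its maximum on $[0,\infty)$, and the paper then asserts that $\arg\max_{\|\vect{v}\|_2\le 1}\max_{t\ge 0}f_{\vbeta,\vect{v}}(t)$ exists, so that $\ell$ attains its maximum along the corresponding ray. You instead run the contrapositive via a maximizing-sequence and recession-direction argument, and then invoke the log-sum-exp structure to upgrade ``non-decreasing'' to ``strictly increasing or constant'', quotienting out constant directions by induction on $d$. What your approach buys is rigor on the forward direction: the paper dismisses it as ``simple'', but for a general concave $C^1$ function that implication is actually false --- take $\ell(x,y)=-\bigl(\max(e^{-x}+y,0)\bigr)^2$, which is concave and $C^1$, attains its maximum $0$ on the set $\{y\le -e^{-x}\}$, yet has $t\mapsto\ell(t,0)=-e^{-2t}$ strictly increasing. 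Your base-point-independence observation --- that whether $f_{\vbeta,\vect{v}}$ is strictly increasing depends only on $\vect{v}$, because the argmax edge set $\{e:b_t(e)>0,\ \vect{x}_e^T\vect{v}=\max_{f:b_t(f)>0}\vect{x}_f^T\vect{v}\}$ at each time $t$ does --- is precisely the structural fact about this particular log-likelihood that makes the forward direction go through, and you make it explicit where the paper does not.
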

\begin{proof}
The forward direction is simple, so we focus on the reverse direction.
First note that since the log-likelihood is concave and continuously differentiable, 
every \(f_{\vbeta, \vect{v}}\) is also concave and continuously differentiable.
As a result, the function \(f_{\vbeta, \vect{v}}\) is either constant, strictly decreasing, 
increasing to a maximum and then decreasing, or strictly increasing.
Let \(M_{\vbeta}(\vect{v}) = \max_{t \geq 0} f_{\vbeta, \vect{v}}(t)\).
Since \(f_{\vbeta, \vect{v}}\) is not strictly increasing, the function
\(M_{\vbeta}(\vect{v})\) is defined for all \(\vect{v}\). Furthermore, the vector $\vect{v}^*$ defined as $\arg\max_{\|\vect{v}\|_2 \le 1} M_{\vect{\beta}}(\vect{v})$ must exist, and that the maximum of $\ell$ must then agree with the maximum of $f_{\vbeta, \vect{v}^*}$.
\end{proof}


Returning to the proof of the theorem, suppose \(t\) and \(f\) exist such that \(b_{t}(f) > 0\) and \(\vect{x}_{e_{t}}^{\trans} \vect{v} < \vect{x}_{f}^{\trans} \vect{v}\).
The \(t^{\text{th}}\) term in the expansion of \(L(\vbeta + s \vect{v}; \ordered_k)\) is bounded above by
\[
p_{t}
=
\frac{b_{t}(e_{t}) \exp\left(\vect{x}_{e_{t}}^{T} (\vbeta + s\vect{v})\right)}{b_{t}(e_{t}) \exp\left(\vect{x}_{e_{t}}^{T} (\vbeta + s\vect{v})\right)
+
b_{t}(f) \exp\left(\vect{x}_{f}^{T} (\vbeta + s\vect{v})\right)}.
\]
Furthermore, since $\vect{x}_{e_t}^T \vect{v} < \vect{x}_f^T \vect{v}$, it is easy to see that \(p_{t} \rightarrow 0\) as $s \rightarrow \infty$. Since \(p_{t}\) bounds the likelihood,
we conclude that \(f_{\vbeta, \vect{v}}\) cannot be strictly increasing.
By Lemma~\ref{lemmaExistenceRays}, the log-likelihood cannot attain a maximum. Now suppose that for all $t$ and $f$ such that $b_t(f) > 0$, we have $\vect{x}_f^T \vect{v} = c$. Then for any $\vbeta$ and $\vect{v}$, the function $f_{\vbeta, \vect{v}}$ is constant, since each term is multiplied by $\exp(t\vect{x}_f^T \vect{v}) = \exp(tc)$ in the likelihood expression. In particular, this holds for any potential maximum $\hat{\vbeta}$, which is a contradiction. This proves the forward direction.

For the reverse direction, suppose that for some \(\vect{v}\), we have 
\(\vect{x}_{f}^{\trans} \vect{v} \leq \vect{x}_{e_{t}}^{\trans} \vect{v}\) for all \(f\) and all $t$ such that \(b_{t}(f) > 0\), with strict inequality for some \(f\). Recalling that
\begin{align*}
f_{\vbeta, \vect{v}}(s) & = \frac{b_t(e_t) \exp\left(\vect{x}_{e_t}^T (\vbeta + s\vect{v})\right)}{\sum_{e \in \edgeset} b_t(e) \exp\left(\vect{x}_e^T(\vbeta + s\vect{v})\right)},
\end{align*}
we may compute
\begin{align*}
f'_{\vbeta, \vect{v}}(s) 
& = \frac{\sum_{e \in \edgeset} b_t(e) \exp\left(\vect{x}_e^T(\vbeta + s\vect{v})\right) b_t(e_t) \exp\left(\vect{x}_{e_t}^T (\vbeta + s\vect{v})\right) \vect{x}_{e_t}^T \vect{v}}{{\left(\sum_{e \in \edgeset} b_t(e) \exp\left(\vect{x}_e^T(\vbeta + s\vect{v})\right)\right)^2}} \\
& \qquad - \frac{b_t(e_t)\exp\left(\vect{x}_{e_t}^T (\vbeta + s\vect{v})\right)\left(\sum_{e \in \edgeset} b_t(e) \exp\left(\vect{x}_e^T(\vbeta + s\vect{v})\right) \vect{x}_e^T \vect{v} \right)}{\left(\sum_{e \in \edgeset} b_t(e) \exp\left(\vect{x}_e^T(\vbeta + s\vect{v})\right)\right)^2} \\
& > \frac{\sum_{e \in \edgeset} b_t(e) \exp\left(\vect{x}_e^T(\vbeta + s\vect{v})\right) b_t(e_t) \exp\left(\vect{x}_{e_t}^T (\vbeta + s\vect{v})\right) \vect{x}_{e_t}^T \vect{v}}{{\left(\sum_{e \in \edgeset} b_t(e) \exp\left(\vect{x}_e^T(\vbeta + s\vect{v})\right)\right)^2}} \\
& \qquad - \frac{b_t(e_t)\exp\left(\vect{x}_{e_t}^T (\vbeta + s\vect{v})\right)\left(\sum_{e \in \edgeset} b_t(e) \exp\left(\vect{x}_e^T(\vbeta + s\vect{v})\right) \vect{x}_{e_t}^T \vect{v}\right)}{\left(\sum_{e \in \edgeset} b_t(e) \exp\left(\vect{x}_e^T(\vbeta + s\vect{v})\right)\right)^2} \\
& = 0,
\end{align*}
so \(f_{\vbeta, \vect{v}}\) is strictly increasing for each $\vect{\beta}$. Hence, by Lemma~\ref{lemmaExistenceRays}, the log-likelihood does not achieve a maximum. This proves the theorem.


\subsection{Proof of Proposition~\ref{propLogLikelihoodHessian}}
\label{appCovRanks}

Using Lemma~\ref{lemmaDiffMoment} and Remark~\ref{remarkDerivative} in Appendix~\ref{AppComp},
we have
\begin{align*}
\frac{\partial^{2}}{\partial \sbeta(a) \sbeta(b)} \ell(\vbeta; \ordered_{k}) & =
-\sum_{t = 2}^{k} \left(\expect_{t}[Z_{t}(a) Z_{t}(b)] - \expect_{t}[Z_{t}(a)] \expect_{t}[Z_{t}(b)]\right) \\
& = - \sum_{t = 2}^{k} \cov_{t}(Z_{t}(a), Z_{t}(b)).
\end{align*}
This completes the proof.

\subsection{Proof of Proposition~\ref{propCovRanks}}

We begin with a useful lemma:

\begin{lemma}
Let \(M\) be an invertible \(d \times d\) matrix, and let \(X' = XM\). Let $Z_t$ denote a random vector obtained by sampling the rows of $X$ according to a fixed probability distribution where all rows are sampled with strictly positive probability, and let $Z'_t$ denote a random vector obtained by sampling the rows of $X'$ according to the same distribution. Let $C_t = \cov(Z_t)$ and $C'_t = \cov(Z'_t)$. Then we have $\rank(C'_t) = \rank(C_t)$.
\label{lemmaRankInvariance}
\end{lemma}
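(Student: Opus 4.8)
The plan is to realize $Z'_t$ as a deterministic linear image of $Z_t$ under a common coupling, and then apply the standard transformation law for covariance matrices together with the invertibility of $M$.

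First I would make the coupling explicit. Sampling a row of $X$ and the corresponding row of $X'$ according to the \emph{same} distribution means that there is a single random edge index $e$, drawn from the fixed distribution, such that $Z_t = \vect{x}_e$ (the $e$-th covariate vector, i.e.\ the transpose of the $e$-th row of $X$) and, simultaneously, $Z'_t$ equals the corresponding covariate vector of $X'$. Since $X' = XM$, the $e$-th row of $X'$ is $\vect{x}_e^T M$, so its transpose is $M^T \vect{x}_e = M^T Z_t$. Hence, under this coupling, $Z'_t = M^T Z_t$ almost surely — an identity of random vectors, not merely an equality in distribution.

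Next I would apply the transformation rule $\cov(AZ) = A \cov(Z) A^T$ for a deterministic matrix $A$ and random vector $Z$, with $A = M^T$, to obtain
\[
C'_t = \cov(Z'_t) = \cov(M^T Z_t) = M^T \cov(Z_t) M = M^T C_t M.
\]
Finally I would invoke the fact that left- and right-multiplication by invertible matrices preserves rank: since $M$ is invertible, so is $M^T$, whence $\rank(C'_t) = \rank(M^T C_t M) = \rank(C_t)$, as claimed.

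I do not anticipate any real obstacle. The only points that need care are (a) stating clearly that the two samplings are coupled through a common index $e$, so that $Z'_t = M^T Z_t$ holds pointwise rather than only in law, and (b) keeping the transposes straight so that the conjugation is by $M^T$. Note that the hypothesis that every row is sampled with strictly positive probability is not actually used for this rank identity; it is needed only where the lemma is subsequently applied, e.g.\ in Proposition~\ref{propCovRanks}.
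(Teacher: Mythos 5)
Your proposal is correct and follows essentially the same route as the paper: identify $Z'_t = M^T Z_t$, deduce $C'_t = M^T C_t M$ from the covariance transformation rule, and conclude rank equality from the invertibility of $M$. The extra care you take about the coupling and the observation that the positive-probability hypothesis is not needed here are fine but do not change the argument.
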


\begin{proof}
Note that $Z'_t = M^T Z_t$. We have
\begin{equation*}
\E[Z'_t] = \E[M^T Z_t] = M^T \E[Z_t]
\end{equation*}
and
\begin{equation*}
\E[Z'_t Z_t^{'T}] = \E[M^T Z_t Z_t^T M] = M^T \E[Z_t Z_t^T] M,
\end{equation*}
so clearly,
\begin{equation*}
C'_t = \cov[Z'_t] = M^T \cov[Z_t] M = C_t.
\end{equation*}
Since $M$ is invertible, it follows by elementary linear algebra that $\rank(C'_t) = \rank(C_t)$.
\end{proof}

Note that since $\rank(X) = d$, we may find an invertible matrix $M$ such that $X' = XM$ has the form
\[
X' 
=
\left[
\begin{array}{c}
I_{d} \\ \hline
H
\end{array}
\right],
\]
for some matrix \(H \in \real^{(m-d) \times d}\). By Lemma~\ref{lemmaRankInvariance}, it suffices to analyze the covariance matrix $C'_t$ of the sampled rows of $X'$. We claim that if $H\vect{1} \neq \vect{1}$, then $\rank(C'_t) = d$; if $H\vect{1} = \vect{1}$, then $\rank(C'_t) = d-1$.

We first consider the case when \(H\vect{1} \neq \vect{1}\).
Suppose $\rank(C'_t) < d$. Then for some \(\vect{v} \neq \vect{0}\), we have \(\vect{v}^{T} C'_t \vect{v} = 0\). We may write
\[
\vect{v}^{T} C'_{t} \vect{v} 
=
\var(\vect{v}^{T} Z'_{t}),
\]
where we use $Z'_t$ to denote the random vector corresponding to a randomly sampled row. Clearly, the latter expression is 0 if and only if $\vect{v}^T Z'_t,$ is almost surely constant. However, the possible values of $\vect{v}^T Z'_t$ correspond to the elements of \(X' \vect{v}\).
Since the upper block of \(X'\) is equal to \(I_{d}\) and \(\vect{v} \neq \vect{0}\), this constant must be nonzero because all rows are sampled with positive probability, by assumption. Without loss of generality, suppose
\begin{equation}
X' \vect{v} = \vect{1}.
\label{eqnVZConstant}
\end{equation}
Since we have \((X' \vect{v})_{i} = v_{i} = 1\) for \(i \leq d\), we must have \(\vect{v} = \vect{1}\).
But then $H \vect{1} = \vect{1}$, contradicting our assumption. Thus, we must have $\rank(C'_t) = d$.

In the case when $H \vect{1} = \vect{1}$, the above argument shows that $X' \vect{1} = \vect{1}$, so
\begin{equation*}
\vect{1}^T C'_{t} \vect{1} = \var(\vect{1}^T Z'_t) = 0.
\end{equation*}
In addition, any vector $\vect{v}$ satisfying $\vect{v}^T C'_t \vect{v} = 0$ must be a scalar multiple of $\vect{1}$. Hence, we conclude that the null space of $C'_t$ has dimension 1, so $\rank(C'_t) = d - 1$.


\section{Computational lemmas}
\label{AppComp}

In this Appendix, we derive a few useful lemmas regarding limiting distributions and expectations. We use \(\expect_{t}\) to denote the expectation with respect to the  conditional measure \(\prob_{\vbeta_{0}}(\cdot \mid \field_{t - 1})\), and write $\expect_\infty$ to denote the expectation with respect to the limiting conditional measure $\lim_{t \to \infty} \prob_{\vbeta_{0}, t} = \prob_{\vbeta_{0}, \infty} = \vect{\pi}_{\vbeta_0}$.
Recall that $Z_t$ denotes a random covariate vector corresponding to the edge chosen by $\prob_{\vbeta_{0}}(\cdot \mid \field_{t - 1})$.

\begin{proposition}
For every collection of positive integers $\{j_1, \dots, j_I\}$, we have the almost sure convergence
\[
\lim_{t \to \infty} \expect_{t}\left[\prod_{i = 1}^{I} Z_{t}(s_{i})^{j_{i}}\right]
=
\expect_{\infty}\left[\prod_{i = 1}^{I} Z_{\infty}(s_{i})^{j_{i}}\right].
\]
\label{lemmaLimitExpt}
\end{proposition}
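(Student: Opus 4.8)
The plan is to reduce the claim to the already-established almost-sure convergence of the conditional measures $\prob_{\vbeta_0,t} \to \vect{\pi}_{\vbeta_0}$ (Lemma~\ref{lemmaUrnConvergence}), together with the fact that the covariates take only finitely many values. The key observation is that $Z_t$ is a discrete random vector supported on the finite set $\{\vect{x}_1, \dots, \vect{x}_m\}$ of edge covariate vectors, so every conditional expectation of a function of $Z_t$ is a \emph{finite} sum against the probability weights $\prob_{\vbeta_0,t}(\{e\})$. Concretely, writing $g(\vect{x}) \defeq \prod_{i=1}^I \vect{x}(s_i)^{j_i}$, we have the exact identity
\[
\expect_t\!\left[\prod_{i=1}^I Z_t(s_i)^{j_i}\right]
=
\sum_{e=1}^m g(\vect{x}_e)\, \prob_{\vbeta_0,t}(\{e\}),
\]
and likewise with $\expect_\infty$ and $\prob_{\vbeta_0,\infty} = \vect{\pi}_{\vbeta_0}$ on the right-hand side.

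First I would invoke Lemma~\ref{lemmaUrnConvergence} to assert that, on an event of probability one, $\prob_{\vbeta_0,t}(\{e\}) \to \vect{\pi}_{\vbeta_0}(e)$ for every edge $e$ simultaneously (this is a finite collection of coordinatewise limits, so the almost-sure intersection is still an event of probability one). Next, since $g$ is a fixed polynomial and each $\vect{x}_e$ is a fixed vector, the constants $g(\vect{x}_e)$ are finite and bounded; hence the finite linear combination $\sum_{e=1}^m g(\vect{x}_e)\,\prob_{\vbeta_0,t}(\{e\})$ converges almost surely to $\sum_{e=1}^m g(\vect{x}_e)\,\vect{\pi}_{\vbeta_0}(e)$ as $t \to \infty$, simply because a finite sum of convergent sequences converges to the sum of the limits. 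Identifying the limit with $\expect_\infty[\prod_i Z_\infty(s_i)^{j_i}]$ — where $Z_\infty$ is the random covariate vector drawn according to the non-random measure $\vect{\pi}_{\vbeta_0}$ — completes the argument.

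There is essentially no analytic obstacle here: the finiteness of the edge set removes any need for a dominated-convergence or uniform-integrability argument, which is what makes this lemma clean despite the non-i.i.d.\ structure of the process. The only point requiring a little care is the bookkeeping that the single almost-sure event on which $\prob_{\vbeta_0,t} \to \vect{\pi}_{\vbeta_0}$ holds is the same event on which the conclusion holds, and that it does not depend on the choice of indices $\{s_1,\dots,s_I\}$ or exponents $\{j_1,\dots,j_I\}$ — but since there are only countably many such choices (in fact the convergence of the measure itself already implies convergence of \emph{every} bounded function of $Z_t$), this is immediate. I would phrase the proof in two or three sentences along exactly these lines.
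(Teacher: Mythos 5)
Your proposal is correct and follows essentially the same route as the paper: both write $\expect_t$ of the monomial as a finite sum of fixed constants against the conditional probability weights and pass to the limit using the almost-sure convergence of those weights (the paper cites Lemma~\ref{propPiWeights} on the normalized ball counts, which is equivalent for this purpose to your appeal to Lemma~\ref{lemmaUrnConvergence}). No gaps; your remark that finiteness of the edge set obviates any dominated-convergence argument matches the paper's implicit reasoning.
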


\begin{proof}
Writing out the left-hand side, we have
\begin{align*}
& \begin{aligned}
\lim_{t \to \infty} \expect_{t}\left[\prod_{i = 1}^{I} Z_{t}(s_{i})^{j_{i}}\right]
&=
\lim_{t \to \infty}
\sum_{u = 1}^{m} \frac{b_{t}(u) \exp(\vect{x}_{u}^{T} \vbeta_{0})}{\sum_{v = 1}^{m} b_{t}(v) \exp(\vect{x}_{v}^{T} \vbeta_{0})} \prod_{i = 1}^{I} x_u(s_{i})^{j_{i}} \\ 
&=
\sum_{u = 1}^{m} \frac{b_{\infty}(u) \exp(\vect{x}_{u}^{T} \vbeta_{0})}{\sum_{v = 1}^{m} b_{\infty}(v) \exp(\vect{x}_{v}^{T} \vbeta_{0})} \prod_{i = 1}^{I} x_u(s_{i})^{j_{i}} \\ 
&=
\expect_{\infty}\left[\prod_{i = 1}^{I} Z_\infty(s_{i})^{j_{i}}\right],
\end{aligned}
\end{align*}
where the second equality follows from the almost-sure convergence guarantee of Lemma~\ref{propPiWeights}.
\end{proof}

One useful special case of Proposition~\ref{lemmaLimitExpt} is the convergence of conditional covariances
\begin{equation*}
\lim_{t \rightarrow \infty} \cov_{t}(Z_{t}(a), Z_{t}(b)) \to \cov_{\infty}(Z_{t}(a), Z_{t}(b)).
\end{equation*}

The second lemma provides an expression for computing derivatives of moments.
\begin{lemma}
For every collection of positive integers $\{j_1, \dots, j_I\}$, we have
\[
\frac{\partial }{\partial \sbeta(r)} \expect_{t}\left[\prod_{i = 1}^{I} Z_{t}(s_{i})^{j_{i}}\right]
=
\expect_{t} \left[Z_{t}(r) \prod_{i = 1}^{I} Z_{t}(s_{i})^{j_{i}}\right]
-
\expect_{t}\left[Z_{t}(r)\right]
\expect_{t}\left[\prod_{i = 1}^{I} Z_{t}(s_{i})^{j_{i}}\right].
\]
\label{lemmaDiffMoment}
\end{lemma}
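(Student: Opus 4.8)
The plan is to compute the derivative directly by writing out the conditional expectation as an explicit finite sum over edges and differentiating term by term. Fix the history $\field_{t-1}$, so that the weights $b_t(u)$ are constants. By definition of the conditional measure $\prob_{\vbeta_0}(\cdot \mid \field_{t-1})$ under the exponential parametrization, we have
\[
\expect_{t}\left[\prod_{i=1}^{I} Z_t(s_i)^{j_i}\right]
=
\sum_{u=1}^{m} p_t(u) \prod_{i=1}^{I} x_u(s_i)^{j_i},
\qquad
p_t(u) = \frac{b_t(u)\exp(\vect{x}_u^T\vbeta)}{\sum_{v=1}^{m} b_t(v)\exp(\vect{x}_v^T\vbeta)}.
\]
Since the product $\prod_i x_u(s_i)^{j_i}$ does not depend on $\vbeta$, it suffices to differentiate each $p_t(u)$ with respect to $\sbeta(r)$.

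The key computation is the standard log-sum-exp / softmax derivative: writing $S = \sum_v b_t(v)\exp(\vect{x}_v^T\vbeta)$ for the normalizer, the quotient rule gives
\[
\frac{\partial}{\partial\sbeta(r)} p_t(u)
=
p_t(u)\, x_u(r) - p_t(u)\sum_{v=1}^{m} p_t(v)\, x_v(r)
=
p_t(u)\big(x_u(r) - \expect_t[Z_t(r)]\big),
\]
where I used $\frac{\partial}{\partial\sbeta(r)}\exp(\vect{x}_u^T\vbeta) = x_u(r)\exp(\vect{x}_u^T\vbeta)$ and $\frac{\partial}{\partial\sbeta(r)} S = \sum_v b_t(v)\exp(\vect{x}_v^T\vbeta)\,x_v(r) = S\cdot\expect_t[Z_t(r)]$. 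Substituting this back and recognizing $\sum_u p_t(u)\,x_u(r)\prod_i x_u(s_i)^{j_i} = \expect_t[Z_t(r)\prod_i Z_t(s_i)^{j_i}]$ and $\sum_u p_t(u)\prod_i x_u(s_i)^{j_i} = \expect_t[\prod_i Z_t(s_i)^{j_i}]$ yields exactly the claimed identity.

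There is essentially no serious obstacle here; the only points requiring a word of care are (i) justifying that differentiation passes inside the finite sum, which is immediate since it is a finite sum of smooth functions of $\vbeta$ for fixed history, and (ii) noting that everything is conditional on $\field_{t-1}$, so the $b_t(u)$ are treated as constants and the resulting identity holds almost surely (indeed pointwise on $\Omega$). If one wishes, the statement can be phrased as the familiar fact that cumulant-type derivatives of an exponential-family log-partition function produce covariances; here the ``base measure'' is the counting measure weighted by $b_t(u)$, and the ``sufficient statistic'' in direction $r$ is the coordinate $x_u(r)$. I would simply present the two-line quotient-rule calculation above.
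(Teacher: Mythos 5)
Your proposal is correct and follows essentially the same route as the paper: write $\expect_t$ as the explicit finite sum over edges with softmax weights $p_t(u) \propto b_t(u)\exp(\vect{x}_u^T\vbeta)$, apply the quotient rule to differentiate in $\sbeta(r)$, and identify the two resulting sums as $\expect_t[Z_t(r)\prod_i Z_t(s_i)^{j_i}]$ and $\expect_t[Z_t(r)]\expect_t[\prod_i Z_t(s_i)^{j_i}]$. The paper's proof is exactly this calculation, so there is nothing to add.
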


\begin{proof}
We may compute the partial derivative as follows:
\begin{align*}
& \begin{aligned}
\frac{\partial }{\partial \sbeta(r)} \expect_{t}\left[\prod_{i = 1}^{I} Z_{t}(s_{i})^{j_{i}}\right]
&=
\frac{\partial }{\partial \sbeta(r)} 
\sum_{u = 1}^{m} \frac{b_{t}(u) \exp(\vect{x}_{u}^{T} \vbeta)}{\sum_{v = 1}^{m} b_{t}(v) \exp(\vect{x}_{v}^{T} \vbeta)} 
\prod_{i = 1}^{I} x_{u}(s_{i})^{j_{i}} \\ 
&=
\sum_{u = 1}^{m}
\frac{b_{t}(u) \exp(\vect{x}_{u}^{T} \vbeta)}{\sum_{v = 1}^{m} b_{t}(v) \exp(\vect{x}_{v}^{T} \vbeta)} 
x_{u}(r) \prod_{i = 1}^{I} x_{u}(s_{i})^{j_{i}}
 \\ &\qquad -
\left(\frac{\sum_{v = 1}^{m} b_{t}(v) \exp(\vect{x}_{v}^{T} \vbeta)x_{v}(r)}{\sum_{v = 1}^{m} b_{t}(v) \exp(\vect{x}_{v}^{T} \vbeta)}
\right) \\ 
&\qquad \times
\left(
\sum_{u = 1}^{m}
\frac{b_{t}(u) \exp(\vect{x}_{u}^{T} \vbeta)}{\sum_{v = 1}^{m} b_{t}(v) \exp(\vect{x}_{v}^{T} \vbeta)} 
\prod_{i = 1}^{I} x_{u}(s_{i})^{j_{i}}
\right) \\ 
&=
\expect_{t}\left[Z_{t}(r) \prod_{i = 1}^{I} Z_{t}(s_{i})^{j_{i}}\right] 
-
\expect_{t}\left[Z_{t}(r)\right]
\expect_{t}\left[\prod_{i = 1}^{I} Z_{t}(s_{i})^{j_{i}}\right].
\end{aligned}
\end{align*}
This proves the desired result.
\end{proof}

\begin{remark}
Lemma~\ref{lemmaDiffMoment} is helpful for computing derivatives and showing that they are uniformly bounded.
In particular, we have
\begin{equation}
\nabla \ell(\vbeta; \ordered_{k})
=
\sum_{t = 2}^{k} \vect{x}_{e_{t}} - \sum_{t = 2}^{k} \expect_{t}[Z_{t}] = \sum_{t=2}^k \left(Z_t - \E_t[Z_t]\right).
\label{eqnLogLikelihood}
\end{equation}
Going to second derivatives, we obtain
\begin{align*}
\frac{\partial^{2} \ell(\vbeta; \ordered_{k})}{\partial\sbeta(r) \partial \sbeta(s)}
&=
\sum_{t = 2}^{k} \Big(\expect_{t}[Z_{t}(r)] \expect_{t}[Z_{t}(s)] - \expect_{t}[Z_{t}(r) Z_{t}(s)]\Big).
\end{align*}
Iterating one step further, we have
\begin{align*}
& \begin{aligned}
\frac{\partial^{3} \ell(\vbeta; \ordered_{k})}{\partial \sbeta(q)\partial\sbeta(r) \partial \sbeta(s)}
&=
\sum_{t = 2}^{k}
\Big(- \expect_{t}[Z_{t}(q) Z_{t}(r) Z_{t}(s)]
-2\expect_{t}[Z_{t}(q)] \expect_{t}[Z_{t}(r)] \expect_{t}[Z_{t}(s)] \\
&\qquad \qquad
+ \expect_{t}[Z_{t}(q) Z_{t}(r)] \expect_{t}[Z_{t}(s)]
+ \expect_{t}[Z_{t}(q) Z_{t}(s)] \expect_{t}[Z_{t}(r)] \\ 
& \qquad \qquad
+ \expect_{t}[Z_{t}(r) Z_{t}(s)] \expect_{t}[Z_{t}(q)]\Big).
\end{aligned}
\end{align*}
We now see that the third derivatives are uniformly bounded in the following sense:
Suppose \(B\) is a uniform bound on the entries of \(Z_{t}\), which we know exists because all entries are drawn from a fixed matrix \(X\). Then the third derivatives of the log-likelihood are all bounded by \(6k B^{3}\). Similarly, we may argue that the first and second derivatives are uniformly bounded.
\label{remarkDerivative}
\end{remark}


\section{Consistency and asymptotic normality}
\label{appConsistencyNormality}

\subsection{Proof of Lemma~\ref{LemBall}}
\label{AppLemBall}

Our approach is to use a Taylor expansion around \(\vbeta_{0}\). Using the mean-value form of the remainder term~\citep{CouJoh89}, we have
\begin{align*}
& \begin{aligned}
\frac{1}{k} \ell(\vbeta) - \frac{1}{k} \ell(\vbeta_{0}) 
&=
\frac{1}{k} \sum_{a = 1}^{d} A_{a}(\ordered_{k}) (\sbeta(a) - \sbeta_{0}(a)) \\ 
&\qquad 
+ \frac{1}{2k} \sum_{a = 1}^{d} \sum_{b = 1}^{d} 
B_{ab}(\ordered_{k}) (\sbeta(a) - \sbeta_{0}(a)) (\sbeta(b) - \sbeta_{0}(b)) \\ 
&\qquad 
+ \sum_{a = 1}^{d} \sum_{b = 1}^{d} \sum_{c = 1}^{d} C_{abc}(\ordered_k)
(\sbeta(a) - \sbeta_{0}(a)) (\sbeta(b) - \sbeta_{0}(b)) (\sbeta(c) - \sbeta_{0}(c)) \\
& \defeq
S_{1} + S_{2} + S_{3},
   \end{aligned}
\end{align*}
where by Remark~\ref{remarkDerivative}, we have
\begin{align*}
A_{a}(\ordered_{k})
&= \frac{\partial}{\partial \beta(a)} \ell(\vbeta; \ordered_{k}) \bigr|_{\vbeta = \vbeta_{0}} =
\sum_{t = 2}^{k} Z_{t}(a) - \expect_{t}[Z_{t}(a)], \\
B_{ab}(\ordered_{k}) &=
\frac{\partial^{2}}{\partial\sbeta(a) \partial \sbeta(b)} \ell(\vbeta; \ordered_{k}) \bigr|_{\vbeta = \vbeta_{0}} 
= -\sum_{t = 2}^{k} \cov_{t}(Z_{t}(a), Z_{t}(b)),
\end{align*}
and $C_{abc}(\ordered_k)$ satisfies
\begin{align*}
|C_{abc}(\ordered_{k})| & \le \frac{1}{6} \max_{a,b,c} \left|
\frac{\partial^{3} \ell(\beta)}{\partial \sbeta(a) \partial \sbeta(b) \partial \sbeta(c)} \right|
%
\le \frac{1}{6} \cdot 6k \; \max_{i,j} |X(i,j)|^3.
\end{align*}

Now we turn to bounding the sums, starting with \(S_{1}\).
By Lemma~\ref{thmLLNMartingales}, we have
\[
\frac{1}{k} A_{a}(\ordered_{k}) \asto 0,
\]
since $A_a(\ordered_k)$ is a sum of bounded martingale increments. Hence, with probability tending to \(1\), 
we have \(|A_{a}(\ordered_{k}) / k| \leq r^{2}\).
As a result, we see that
\begin{align*}
& \begin{aligned}
|S_{1}|
&\leq 
\sum_{a = 1}^{d} \frac{1}{k} |A_{a}(\ordered_{k})| |\sbeta(a) - \sbeta_{0}(a)| 
\leq d r^{3}.
\end{aligned}
\end{align*}
To bound $S_2$, note that
\[
\frac{1}{k} B_{ab}(\ordered_{k})
=
-\frac{1}{k} \sum_{t = 2}^{k} 
\cov_{t}(Z_{t}(a), Z_{t}(b))
\asto 
-\cov_{\infty}(Z_{\infty}(a), Z_{\infty}(b))
=
-I_{\infty, ab}(\vbeta_0),
\]
by Proposition~\ref{lemmaLimitExpt} and Lemma~\ref{lemmaCesaros}, where
\begin{equation}
I_{\infty, ab}(\vbeta)
=
\expect_\infty
\left[
\frac{\partial}{\partial \sbeta(a)} \ell(\vbeta; \ordered_{k}) 
\cdot 
\frac{\partial}{\partial \sbeta(b)} \ell(\vbeta; \ordered_{k}) 
\right].
\label{eqnInformation}
\end{equation}
We now write
\begin{align*}
& \begin{aligned}
S_{2}
&=
\frac{1}{2} \sum_{a = 1}^{d} \sum_{b = 1}^{d} - I_{\infty, ab}(\vbeta_{0}) (\sbeta(a) - \sbeta_{0}(a)) (\sbeta(b) - \sbeta_{0}(b)) 
\\ &\qquad 
+
\frac{1}{2} \sum_{a = 1}^{d} \sum_{b = 1}^{d} \left(\frac{1}{k} B_{ab}(\ordered_{k}) - (-I_{\infty, ab}(\vbeta_{0}))\right)
(\sbeta(a) - \sbeta_{0}(a)) (\sbeta(b) - \sbeta_{0}(b)).
\end{aligned}
\end{align*}
Using an argument similar to that for \(S_{1}\), we can bound the second term by \(d^{2} r^{3}\) with probability going to \(1\) as \(k \rightarrow \infty\). Furthermore, we may rewrite the first term as 
\[
Q(\vbeta)
=
-\frac{1}{2}
(\vbeta - \vbeta_{0})^{T} I_{\infty}(\vbeta_{0}) (\vbeta - \vbeta_{0}),
\]
where \(I_{\infty}\) is the \(d \times d\) matrix of all \(I_{\infty, ab}\).
Since \(I_{\infty}\) is positive definite, the largest eigenvalue of \(-I_{\infty}\) is strictly less than \(0\).
Thus, for sufficiently small \(r\) and appropriate constants \(\alpha, \alpha' > 0\),
we have
\[
S_{2} 
\leq 
- \alpha' r^{2} + d^{2} r^{3}
\leq 
-\alpha r^{2},
\]
with probability tending to \(1\).
Finally, note that by the above discussion, we have
\[
|S_{3}| \leq C' d^{3} r^{3} 
= 
C r^{3},
\]
for some constant \(C\).

Putting this all together, we see that
\[
\frac{1}{k} \ell(\vbeta) - \frac{1}{k} \ell(\vbeta_{0})
\leq 
dr^{3} - \alpha r^{2} + Cr^{3}
=
r^{2}(-\alpha + r(d + C)),
\]
with probability going to \(1\) as \(k \to \infty\). For sufficiently small $r$, the right-hand side is less than 0, proving the desired result.


\subsection{Proof of Lemma~\ref{LemConverge}}
\label{AppLemConverge}

In order to derive the convergence of various components, we will employ Lemma~\ref{thmMultiMartingaleCLT} in Appendix~\ref{appOtherTheorems} with  \(K_t = -t^{-1/2} I_{d} \) and
\begin{equation*}
M_{t} 
= 
\begin{cases}
(\ell'_{1}(\vbeta_{0}; \ordered_{t}), \ldots, \ell'_{d}(\vbeta_{0}; \ordered_{t})), & t \intext \mathbb{N}, \\
M_{\lfloor t \rfloor}, & t \text{ not in } \mathbb{N}.
\end{cases}
\end{equation*}
Noting that
\begin{align*}
& \begin{aligned}
\Delta M_{is}
&=
M_{is} - M_{is-} 
=
\begin{cases}
0, & s \text{ not in } \mathbb{N}, \\ 
Z_{s}(i) - \expect_{s}[Z_{s}(i)], & s \intext \mathbb{N},
\end{cases}
\end{aligned}
\end{align*}
we can easily check that this yields a right-continuous martingale with limits from the left.

We now verify the conditions of Lemma~\ref{thmMultiMartingaleCLT}.
Proving (a) is clear since \(k^{-1/2} I_{d}  \to 0\).
For (b), note that we have 
\[
\overline{K}_{it} = \sum_{j = 1}^{d} |K_{ji, t}| = \frac{1}{t^{1/2}}.
\]
Since all the \(Z_{t}(i)\)'s are bounded by some constant \(B\), we see that 
\[
\overline{K}_{it} \expect\left[\sup_{s \leq t} |\Delta M_{is}| \right] 
\leq 
\frac{2B}{t^{1/2}}
\to
0,
\]
which proves (b).

For (c) and (d), we derive almost sure convergence. Consider the case when $t = k$ is an integer. We have
\begin{align*}
(Q_k)_{ab} & = \sum_{t=2}^k \left(\ell_a'(\vect{\beta}_0; \ordered_t) - \ell_a'(\vect{\beta}_0; \ordered_{t-1})\right) \left(\ell_b'(\vect{\beta}_0; \ordered_t) - \ell_b'(\vect{\beta}_0; \ordered_{t-1})\right) \\
& =  \sum_{t=2}^k \Delta M_{at} \Delta M_{bt} \\
& = \sum_{t=2}^k \left(Z_{t}(a) - \expect_{t}[Z_{t}(a)]\right) \left(Z_{t}(b) - \expect_{t}[Z_{t}(b)]\right).
\end{align*}
By Proposition~\ref{lemmaLimitExpt} and Lemma~\ref{lemmaCesaros}, we therefore have
\begin{equation*}
K_k Q_k K_k^T = \frac{1}{k} \sum_{t=2}^k \cov_t(Z_t, Z_t) \asto \cov_\infty(Z_\infty, Z_\infty) = I_\infty(\vect{\beta}_0).
\end{equation*}
It is straightforward to extend the convergence to non-integral values of $k$, from which we obtain (c).

For (d), equation~\eqref{eqnLogLikelihood} gives
\begin{align*}
H_k & = \E[M_k M_k^T] \\
& = \E\left[\left(\sum_{t = 2}^{k} Z_t - \sum_{t = 2}^{k} \expect_{t}[Z_{t}]\right)\left(\sum_{t = 2}^{k} Z_t - \sum_{t = 2}^{k} \expect_{t}[Z_{t}]\right)^T\right] \\
& = \E\left[\left(\sum_{t=2}^k \left(Z_t - \E_t[Z_t]\right)\right)\left(\sum_{t=2}^k \left(Z_t - \E_t[Z_t]\right)\right)^T\right],
\end{align*}
where the non-diagonal terms cancel because $M_k$ is a martingale. We also know that
\begin{align*}
Z_t & \asto Z_\infty, \\
\E_t[Z_t] & \asto z_\infty,
\end{align*}
almost surely, where $Z_\infty$ is the covariate vector of an edge chosen according to the distribution $\pi_{\vbeta_0}$, and $z_\infty$ is a constant. In particular, we have the almost sure convergence
\begin{equation*}
\left(Z_t - \E_t[Z_t]\right)\left(Z_t - \E_t[Z_t]\right)^T \asto \left(Z_\infty - z_\infty\right)\left(Z_\infty - z_\infty\right)^T.
\end{equation*}
Furthermore, the $Z_t$'s are uniformly bounded by $\max_{i,j}|X(i,j)|$. By the Dominated Convergence Theorem, we may conclude that almost surely,
\begin{equation*}
\E\left[\left(Z_t - \E_t[Z_t]\right)\left(Z_t - \E_t[Z_t]\right)^T\right] \asto \E\left[\left(Z_\infty - z_\infty\right)\left(Z_\infty - z_\infty\right)^T\right].
\end{equation*}
Hence, by Lemma~\ref{lemmaCesaros}, we have
\begin{equation*}
K_k H_k K_k^T 
= 
\frac{1}{k}  \E\left[\left(\sum_{t=2}^k \left(Z_t - \E_t[Z_t]\right)\right)\left(\sum_{t=2}^k \left(Z_t - \E_t[Z_t]\right)\right)^T\right] \asto \E\left[\left(Z_\infty - z_\infty\right)\left(Z_\infty - z_\infty\right)^T\right],
\end{equation*}
almost surely, where the last expression is positive definite because $\vect{\pi}_{\vect{\beta}_0}$ is strictly positive. This implies (d).

Hence, by Lemma~\ref{thmMultiMartingaleCLT}, we have the convergence in distribution
\[
-\frac{1}{k^{1/2}} \nabla \ell(\vbeta_{0}; \ordered_{k})
\lawto
\normal(\vect{0}, I_{\infty}(\vbeta_{0})),
\]
implying that the vector \(T_{k} = (T_{1, k}, \ldots, T_{d, k})\) converges to a 
\(\normal(\vect{0}, I_{\infty}(\vbeta_{0}))\) random variable.

Finally, we show that \(A_{k} \probto -I_{\infty}(\vbeta_{0})\) in probability.
The first term of \(A_{ab, k}\) is 
\[
\frac{1}{k} \ell''_{ab}(\vbeta_{0})
= 
-\frac{1}{k} \sum_{t = 2}^{k} \cov_{t}(Z_{t}(a), Z_{t}(b)) 
\asto
-I_{\infty}(\vbeta_{0}),
\]
where the convergence is almost sure.
Additionally, recalling the uniform bound $|\ell'''_{abc}(\vbeta)| \le 6 B^3 k$ from Remark~\ref{remarkDerivative}, the second term of \(A_{ab, k}\) satisfies
\begin{align*}
& \begin{aligned}
\left|\frac{1}{2k} \sum_{c = 1}^{d} \left(\hat{\sbeta}_{k}(c) - \sbeta_{0}(c)\right) \ell'''_{abc}(\vbeta^{*})\right|
&\leq 
3 B^3 \sum_{c = 1}^{d} |\hat{\sbeta}_{k}(c) - \sbeta_{0}(c)|
\probto 
0,
\end{aligned}
\end{align*}
in probability, using the consistency of \(\hat{\vbeta}_{k}\) established in Appendix~\ref{AppLemBall}. This proves the desired result.

\section{Proofs for Section~\ref{secWithoutOrder}}
\label{AppWithoutOrder}

\subsection{Proof of Theorem~\ref{thmEmpiricalWeights}}

From P\'{o}lya urn theory~\citep{athreya1968}, we know that
\[
\frac{\vect{c}_{t}}{\|\vect{c}_{t}\|_{1}}
\asto
\vect{\pi},
\]
almost surely, where the limiting distribution \(\vect{\pi}\) is the leading left eigenvector of \(W(\vbeta_{0})\).

We further use the fact that the limiting constants \(b_{\infty}(1), \ldots, b_{\infty}(m)\) are uniquely determined by the relation
\begin{equation}
\prob_{\vbeta_{0}, \infty}(e)
=
\frac{b_{\infty}(e) w_{\vbeta_{0}}(e)}{b_{\infty}(1) w_{\vbeta_{0}}(1) + \cdots + b_{\infty}(m) w_{\vbeta_{0}}(m)}.
\label{eqnPBeta0}
\end{equation}
This is stated in the following lemma:
\begin{lemma}
The constants \(b_{\infty}(1), \ldots, b_{\infty}(m)\) appearing in equation~\eqref{eqnPBeta0} are unique up to a scale factor.
\label{lemmaUniqueB}
\end{lemma}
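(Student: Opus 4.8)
The plan is to observe that equation~\eqref{eqnPBeta0} (which is just the componentwise form of equation~\eqref{eqnPiWeights}, using $\prob_{\vbeta_0,\infty}=\vect{\pi}_{\vbeta_0}$ from Lemma~\ref{lemmaUrnConvergence}) pins down each product $b_\infty(e)\,w_{\vbeta_0}(e)$ up to a single global constant, and then to divide out the strictly positive weights. Concretely, let $\vect{w}$ denote the weight vector with entries $w_{\vbeta_0}(e)$ and write $S \defeq \sum_{f=1}^m b_\infty(f)\,w_{\vbeta_0}(f)$ for the common denominator appearing in~\eqref{eqnPBeta0}. Rearranging that equation gives $b_\infty(e)\,w_{\vbeta_0}(e) = S\,\prob_{\vbeta_0,\infty}(e)$ for every edge $e$. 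Since the exponential parametrization~\eqref{eqnExpWeight} guarantees $w_{\vbeta_0}(e) = \exp(\vect{x}_e^T\vbeta_0) > 0$, we may solve to obtain $b_\infty(e) = S\,\prob_{\vbeta_0,\infty}(e)/w_{\vbeta_0}(e)$, i.e.\ $\vect{b}_\infty = S\,(\vect{\pi}_{\vbeta_0}\odiv\vect{w})$; since $S$ depends only on $\vect{b}_\infty$ itself, this shows every solution lies on the ray spanned by $\vect{\pi}_{\vbeta_0}\odiv\vect{w}$.

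First I would record that $\vect{\pi}_{\vbeta_0} = \prob_{\vbeta_0,\infty}$ is strictly positive, almost surely, by Lemma~\ref{lemmaUrnConvergence}, so that the right-hand side above is a genuine strictly positive vector and the division $\vect{\pi}_{\vbeta_0}\odiv\vect{w}$ is well defined. Next I would verify the converse: for any scalar $\lambda > 0$, the vector $\vect{v} = \lambda(\vect{\pi}_{\vbeta_0}\odiv\vect{w})$ does satisfy~\eqref{eqnPBeta0}, because $\frac{v(e)\,w_{\vbeta_0}(e)}{\sum_f v(f)\,w_{\vbeta_0}(f)} = \frac{\lambda\,\pi_{\vbeta_0}(e)}{\lambda\sum_f \pi_{\vbeta_0}(f)} = \pi_{\vbeta_0}(e)$, using that $\vect{\pi}_{\vbeta_0}$ is a probability vector and hence has unit $\ell_1$ norm. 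Thus the solution set of~\eqref{eqnPBeta0} is exactly $\{\lambda(\vect{\pi}_{\vbeta_0}\odiv\vect{w}) : \lambda > 0\}$, and in particular any two solutions agree up to a positive scale factor, which is the claim.

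I do not anticipate any real obstacle: existence of a limiting vector $\vect{b}_\infty$ has already been established in Lemma~\ref{propPiWeights}, so this lemma is purely the uniqueness statement, and uniqueness reduces to the elementary fact that a vector is determined up to scaling by its entrywise product with a fixed strictly positive vector, subject to the requirement that the normalized product coincide with a fixed distribution. The only points requiring care are invoking strict positivity of both $\vect{w}$ (immediate from~\eqref{eqnExpWeight}) and $\vect{\pi}_{\vbeta_0}$ (Lemma~\ref{lemmaUrnConvergence}), so that all the divisions above make sense and produce strictly positive quantities.
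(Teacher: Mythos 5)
Your proof is correct, but it takes a genuinely different route from the paper. The paper rewrites equation~\eqref{eqnPBeta0} as the homogeneous linear system $\bigl(\diag(\vect{w}_{\vbeta_0}) - \vect{\pi}_{\vbeta_0}\vect{w}_{\vbeta_0}^T\bigr)\vect{b} = \vect{0}$ and then invokes the standalone linear-algebraic Lemma~\ref{lemmaLinearUnique}, whose proof is a rank computation (showing the first $m-1$ columns of that matrix are linearly independent, so its null space is the line spanned by $\vect{\pi}_{\vbeta_0}\odiv\vect{w}_{\vbeta_0}$). You instead solve \eqref{eqnPBeta0} directly: writing $S \defeq \vect{b}_\infty^T\vect{w}_{\vbeta_0}$ (nonzero, since it is the denominator of the equation), each component gives $b_\infty(e) = S\,\pi_{\vbeta_0}(e)/w_{\vbeta_0}(e)$, so every solution is a scalar multiple of $\vect{\pi}_{\vbeta_0}\odiv\vect{w}_{\vbeta_0}$, and your converse check shows every positive multiple is indeed a solution. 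Your argument is shorter and more elementary, needing only strict positivity of $\vect{w}_{\vbeta_0}$ (strict positivity of $\vect{\pi}_{\vbeta_0}$, which you carefully invoke via Lemma~\ref{lemmaUrnConvergence}, is not actually required for the divisions, since you only divide by $w_{\vbeta_0}(e)$ and by $\|\vect{\pi}_{\vbeta_0}\|_1 = 1$), and it bypasses the auxiliary lemma entirely; in effect it re-derives the content of Lemma~\ref{lemmaLinearUnique} in this setting by exhibiting the solution ray explicitly. What the paper's detour buys is a reusable general statement about the null space of $\diag(\vect{y}) - \vect{\pi}\vect{y}^T$ for merely nonnegative $\vect{\pi}$, at the cost of a longer column-independence computation; for the purposes of Lemma~\ref{lemmaUniqueB} itself, your direct argument fully suffices.
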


\begin{proof}
We can apply Lemma~\ref{lemmaLinearUnique} in the following manner:
Let \(\vect{\pi}_{\vbeta_{0}}\) be the leading left eigenvector of \(W(\vbeta_{0})\), normalized to be a probability distribution.
Denote the \(b_{\infty}(i)\)'s as a vector by 
\(\vect{b}\), 
and let the weights \(w_{\vbeta_{0}}(i)\) be written in a vector as \(\vect{w}_{\vbeta_{0}}\).
By Lemma~\ref{propPiWeights}, we have
\begin{align*}
& \begin{aligned}
\prob_{\vbeta_{0}, \infty}(i)
&=
\vect{\pi}_{\vbeta_{0}}(i)
=
\frac{b_{\infty}(i) w_{\vbeta_{0}}(i)}{b_{\infty}(1) w_{\vbeta_{0}}(1) + \cdots + b_{\infty}(m) w_{\vbeta_{0}}(m)},
   \end{aligned}
\end{align*}
for each \(i\) and some \(b_{\infty}(i)\), almost surely.
In vectors, this is equivalent to
\begin{align*}
& \begin{aligned}
\diag(\vect{w}_{\vbeta_{0}}) \vect{b} 
&=
\vect{\pi}_{\vbeta_{0}} \vect{w}_{\vbeta_{0}}^{T} \vect{b}.
   \end{aligned}
\end{align*}
Letting \(M = \diag(\vect{w}_{\vbeta_{0}}) - \vect{\pi}_{\vbeta_{0}} \vect{w}_{\vbeta_{0}}^{T}\),
we need to show that \(M \vect{b} = 0\).
Since \(\vect{b}\) need not be unique, we can specify the scale by requiring \(b_{\infty}(m) = 1\).
By Lemma~\ref{lemmaLinearUnique} in Appendix~\ref{appOtherTheorems}, the resulting \(\vect{b}\) is then unique.
\end{proof}

By Lemma~\ref{propPiWeights},  the almost-sure limit
\begin{equation*}
\vect{w}_\infty 
\defeq 
\lim_{k \rightarrow \infty} \frac{\frac{\vect{c}_{k}}{\|\vect{c}_{k}\|_1} \odiv \vect{b}_{k}}{\left\|\frac{\vect{c}_{k}}{\|\vect{c}_{k}\|_1} \odiv \vect{b}_{k}\right\|_1} = \frac{\vect{\pi}_{\vbeta_0} \odiv \vect{b}_\infty}{\|\vect{\pi}_{\vbeta_0} \odiv \vect{b}_\infty\|_1}
\end{equation*}
exists. Clearly, we have $\|\vect{w}_\infty\|_1 = 1$ and
\begin{equation*}
(\vect{b}_\infty \circ \vect{w}_\infty) \propto \vect{\pi_{\vbeta_0}}.
\end{equation*}
Since $\|\vect{\pi}_{\vbeta_0}\|_1 = 1$, we must have 
$\vect{b}_\infty \circ \vect{w}_\infty / (\vect{b}_\infty^T \vect{w}_\infty) = \vect{\pi}_{\vbeta_0}$. Comparing with equation~\eqref{eqnPiWeights} and using Lemma~\ref{lemmaUniqueB}, we conclude that $\vect{w}_\infty = \vect{w}$.


\subsection{Proof of Theorem~\ref{theoremCyclic}}
\label{subsecWOOProofs}

With the cyclic graph structure and conjectured weights in hand,
all that remains is to perform some simple algebraic checks to prove this theorem.
Let \(\vect{b}_{k + 1}'' = \vect{b}_{k + 1} - \vect{b}_{2}\). Since $\vect{c}_{k+1}(i) = \vect{b}''_{k+1}(i+1)$, it is easy to see that \(\vect{\pi} = \vect{b}_{k + 1}'' \circ \vect{w} / \vect{b}_{k + 1}^{''T}\vect{w}\).
Now all that remains is to show \(\vect{\pi}\) is the left leading eigenvector of the weight matrix, defined by
\[
\tilde{W}
=
\left[\begin{array}{ccccc}
 & \frac{b_{k + 1}''(3)}{b_{k + 1}''(2)} & & & \\ 
 & & \frac{b''_{k + 1}(4)}{b''_{k + 1}(3)} & & \\
 & & & \ddots & \\
 & & & & \frac{b''_{k + 1}(1)}{b''_{k + 1}(n)} \\ 
 \frac{b''_{k + 1}(2)}{b''_{k + 1}(1)} & & & & 
\end{array}\right]
=
\left[\begin{array}{ccccc}
 & \frac{c_{k + 1}(2)}{c_{k + 1}(1)} & & & \\ 
 & & \frac{c_{k + 1}(3)}{c_{k + 1}(2)} & & \\
 & & & \ddots & \\
 & & & & \frac{c_{k + 1}(n)}{c_{k + 1}(n-1)} \\ 
 \frac{c_{k + 1}(1)}{c_{k + 1}(n)} & & & & 
\end{array}\right],
\]
where the rows and columns correspond to the ordering of edges $(1,2), (2,3), \dots, (n-1)$. First note that
\begin{equation*}
\tilde{\vect{\pi}} = \left(\vect{c}_{k+1}(1), \dots, \vect{c}_{k+1}(n)\right)^T
\end{equation*}
satisfies
\(\tilde{\vect{\pi}}^{T} = \tilde{\vect{\pi}}^{T} \tilde{W}\),
so \(\vect{\pi}\) is clearly a left eigenvector with eigenvalue \(1\).
To see that this is in fact the leading left eigenvector, note that the characteristic polynomial of \(\tilde{W}\) is
\begin{equation*}
p(\lambda) = \det\left(\lambda I - \tilde{W}\right) = \lambda^{n} - 1.
\end{equation*}
Hence, we conclude that \(1\) is the maximal eigenvalue.
This proves the theorem.


\section{Auxiliary results}
\label{appOtherTheorems}

In this section, we provide a few useful lemmas and theorems. We begin with a linear-algebraic lemma, which is useful for proving limiting results regarding P\'{o}lya urns.

\begin{lemma}
Suppose \(\vect{y} \in \reals^{m}\) has strictly positive entries.
Suppose \(\vect{\pi} \in \reals^{m}\) has nonnegative entries and \(\|\vect{\pi}\|_{1} = 1\).
Define the matrix \(M = \diag(\vect{y}) - \vect{\pi} \vect{y}^{T}\). If
\[
M \vect{x}
= 
\vect{0},
\]
then $\vect{x}$ is a scalar multiple of
\[
\vect{v}
=
\left(
\frac{\pi_{1}}{y_{1}}, \, \frac{\pi_{2}}{y_{2}}, \, \ldots, \frac{\pi_{m}}{y_{m}}
\right)^T.
\]
\label{lemmaLinearUnique}
\end{lemma}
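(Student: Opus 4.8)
The plan is to unwind the equation $M\vect{x} = \vect{0}$ by a direct coordinatewise computation. Substituting $M = \diag(\vect{y}) - \vect{\pi}\vect{y}^{T}$, the hypothesis $M\vect{x} = \vect{0}$ says
\[
\vect{y} \circ \vect{x} \;=\; (\vect{y}^{T}\vect{x})\,\vect{\pi},
\]
i.e.\ $y_{i} x_{i} = (\vect{y}^{T}\vect{x})\,\pi_{i}$ for every index $i$. Setting $c \defeq \vect{y}^{T}\vect{x}$, which is a fixed scalar, and using that each $y_{i}$ is strictly positive so that division is permitted, I obtain $x_{i} = c\,\pi_{i}/y_{i}$ for all $i$; in other words $\vect{x} = c\,\vect{v}$, where $\vect{v}$ is the vector in the statement. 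This already exhibits $\vect{x}$ as a scalar multiple of $\vect{v}$.

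It then remains only to tidy up the boundary cases. If $c = 0$ the displayed identity forces $\vect{x} = \vect{0}$, which is the zero multiple of $\vect{v}$, so the conclusion still holds. Moreover, since $\|\vect{\pi}\|_{1} = 1$ we have $\vect{\pi} \neq \vect{0}$, hence $\vect{v} \neq \vect{0}$, so the set of scalar multiples of $\vect{v}$ is genuinely one-dimensional. (If desired, one can also verify the converse direction: for any scalar $c$, taking $\vect{x} = c\vect{v}$ gives $\vect{y}^{T}\vect{x} = c\sum_{i} y_{i}\cdot \pi_{i}/y_{i} = c\sum_{i}\pi_{i} = c$, so the defining relation $c = \vect{y}^{T}\vect{x}$ is automatically consistent and every scalar multiple of $\vect{v}$ indeed lies in $\ker(M)$; thus $\ker(M)$ is exactly the span of $\vect{v}$. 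This stronger statement is what is actually used in the uniqueness arguments for $\vect{b}_{\infty}$.)

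Since the whole argument is a one-line algebraic manipulation, there is no substantive obstacle here; the only point requiring a moment's attention is handling the degenerate case $c = 0$ together with the observation that $\vect{v}$ is nonzero, both of which are addressed above. This completes the proof.
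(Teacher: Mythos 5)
Your proof is correct, and it takes a genuinely different — and more direct — route than the paper. The paper's argument writes out $M$ entrywise, observes $M\vect{v} = \vect{0}$ so that $\rank(M) \le m-1$, and then shows (after assuming WLOG $\pi_m > 0$) that the first $m-1$ columns of $M$ are linearly independent by manipulating the first and last coordinate equations of a vanishing linear combination; this pins down $\rank(M) = m-1$, so the kernel is one-dimensional and hence spanned by $\vect{v}$. You instead solve $M\vect{x} = \vect{0}$ directly: the equation reads $y_i x_i = (\vect{y}^T\vect{x})\,\pi_i$ for each $i$, and since $y_i > 0$ this immediately gives $\vect{x} = c\,\vect{v}$ with $c = \vect{y}^T\vect{x}$, which is exactly the claimed conclusion (including the degenerate case $c=0$). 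Your coordinatewise computation avoids both the rank bookkeeping and the WLOG permutation, and your added remark that every multiple of $\vect{v}$ is indeed in $\ker(M)$ (using $\|\vect{\pi}\|_1 = 1$) recovers the full content of the paper's rank statement, namely that $\ker(M)$ is precisely the span of $\vect{v}$ — which is what the uniqueness argument for $\vect{b}_\infty$ actually uses. In short, your proof is shorter and more elementary, and loses nothing relative to the paper's.
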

\begin{proof}
Assume without loss of generality that $\pi_m > 0$. Observe that we may write $M$ as
\begin{align}
& \begin{aligned}
M =
\left[
\begin{array}{cccc}
(\pi_{2} + \cdots + \pi_{m}) y_{1}
& -\pi_{1}y_{2}
& \ldots
& - \pi_{1} y_{m} \\
-\pi_{2} y_{1}
& (\pi_{1} + \pi_{3} + \cdots + \pi_{m}) y_{2}
& & - \pi_{2} y_{m} \\ 
\vdots & & \ddots & \vdots  \\
-\pi_{m} y_{1} 
& -\pi_{m} y_{2} & \ldots & (\pi_{1} + \cdots + \pi_{m - 1}) y_{m}
\end{array}
\right].
\label{eqnM1}
   \end{aligned}
\end{align}
Clearly, $Mv = 0$, implying that $\rank(M) \le m-1$. We need to show that the inequality is actually an equality.

Consider a linear combination of the first $m-1$ columns of $M$, weighted by the coefficients $(c_1, \dots, c_{m-1})$. Suppose the linear combination is equal to zero. Examining the first and last component of the resulting vector, we obtain the equations
\begin{align*}
c_1 (1-\pi_1) y_1 - c_2 \pi_1 y_2 - \cdots - c_{m-1} \pi_1 y_{m-1} & = 0, \\
-c_1 \pi_m y_1 - c_2 \pi_m y_2 - \cdots - c_{m-1} \pi_m y_{m-1} & = 0.
\end{align*}
Note that if $\pi_1 = 0$, the first equation implies that $c_1 = 0$. If $\pi \neq 0$, we may divide the first equation by $\pi_1$, divide the second equation by $\pi_m$, and take the difference to obtain
\begin{equation*}
\left(c_1 + \frac{c_1(1-\pi_1)}{\pi_1}\right) y_1 = 0.
\end{equation*}
Recalling that $y_1 > 0$ by assumption, we may rearrange this last equation to conclude that $c_1 = 0$. A similar argument shows that $c_i = 0$ for all $1 \le i \le m-1$. Hence, we conclude that the first $m-1$ columns of $M$ are linearly independent, implying that $\rank(M) = m-1$, as wanted.
\end{proof}

Next, we state Ces\`{a}ro's lemma, which may be found in standard analysis texts such as \cite[Lemma 15.5]{carothers2000}.
\begin{lemma}[Ces\`{a}ro's lemma]
Let \(\{a_{k}\}\) be a sequence of real numbers converging to \(a\).
Then we also have the convergence of the average
\[
\frac{1}{n} \sum_{k = 1}^{n} a_{k} \to a.
\]
\label{lemmaCesaros}
\end{lemma}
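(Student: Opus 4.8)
The plan is to reduce to the mean-zero case and then split each partial sum into a fixed ``head'' that is washed out by the normalization and a ``tail'' that is uniformly small. First I would set $b_k = a_k - a$, so that $b_k \to 0$ and $\frac{1}{n}\sum_{k=1}^n a_k - a = \frac{1}{n}\sum_{k=1}^n b_k$; it therefore suffices to prove $\frac{1}{n}\sum_{k=1}^n b_k \to 0$.

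Next, fix $\varepsilon > 0$. Since $b_k \to 0$, there is an index $N$, depending only on $\varepsilon$, such that $|b_k| < \varepsilon/2$ for all $k > N$. For $n > N$ I would then bound
\[
\left|\frac{1}{n}\sum_{k=1}^n b_k\right| \;\le\; \frac{1}{n}\sum_{k=1}^N |b_k| \;+\; \frac{1}{n}\sum_{k=N+1}^n |b_k| \;\le\; \frac{S_N}{n} \;+\; \frac{n-N}{n}\cdot\frac{\varepsilon}{2},
\]
where $S_N = \sum_{k=1}^N |b_k|$ is a constant not depending on $n$. The second term is at most $\varepsilon/2$, and since $S_N/n \to 0$ as $n \to \infty$, there is an $N'$ with $S_N/n < \varepsilon/2$ for all $n \ge N'$. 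Taking $n \ge \max(N, N')$ gives $\left|\frac{1}{n}\sum_{k=1}^n b_k\right| < \varepsilon$, and since $\varepsilon$ was arbitrary the result follows.

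The one point that deserves care — and really the whole content of the argument — is that $N$ is chosen \emph{before} $n$ and then frozen, so that $S_N$ is a genuine constant; dividing this fixed finite sum by $n$ is what makes the contribution of the initial terms vanish in the limit. There is no substantive obstacle here: the statement is elementary, and the argument above is the standard one found in the cited reference.
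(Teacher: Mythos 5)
Your argument is correct and complete: centering at $a$, freezing the cutoff $N$ before letting $n\to\infty$, and splitting the partial sum into a fixed head (killed by the $1/n$ normalization) and a uniformly small tail is exactly the standard proof. The paper itself gives no proof but simply cites a standard analysis text for this lemma, and your write-up is precisely the argument that reference supplies, so there is nothing to reconcile.
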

Next, we state a helpful computational lemma:

\begin{lemma}[Lemma 6.5.2 from \citealp{lehmann2006}]
Let \((T_{1, k}, \ldots, T_{d, k})\) be a sequence of random vectors converging in distribution to 
\((T_{1}, \ldots, T_{d})\).
Suppose for each fixed \(i\) and \(j\), \(\{A_{ij, k}\}_{k=1}^\infty\) is a sequence of random variables converging in probability to constants \(a_{ij}\), and the resulting matrix \(A = [a_{ij}]\) is invertible.
Define \(B = A^{-1}\).
Then the solutions \((Y_{1, k}, \ldots, Y_{d, k})\) of the system of equations
\[
\sum_{j = 1}^{d}
A_{ij, k} Y_{j, k}
=
T_{i, k}
\]
converge in distribution to the solutions \((Y_{1}, \ldots, Y_{d})\) of the system of equations
\[
\sum_{j = 1}^{d}
a_{ij} Y_{j}
=
T_{i},
\]
which are given by
\[
Y_{i} = \sum_{j = 1}^{d} b_{ij} T_{k}.
\]
\label{lemmaSolveLinear}
\end{lemma}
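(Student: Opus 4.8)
The plan is to reduce the statement to a single application of the continuous mapping theorem, combined with the standard fact that joint convergence in distribution holds whenever one sequence converges in distribution and the other converges in probability to a constant.

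First I would pass to matrix notation: set $A_k = [A_{ij,k}]$, $A = [a_{ij}]$, and $T_k = (T_{1,k}, \ldots, T_{d,k})^{\trans}$. The hypothesis that $A_{ij,k} \probto a_{ij}$ for each $i, j$ is equivalent to $A_k \probto A$ in any matrix norm, since the space of $d \times d$ matrices is finite-dimensional. Because $\det(\cdot)$ is a polynomial, hence continuous, the continuous mapping theorem for convergence in probability gives $\det A_k \probto \det A \ne 0$; consequently the event $E_k \defeq \{A_k \text{ is invertible}\}$ satisfies $\prob(E_k) \to 1$. On $E_k$ the linear system has the unique solution $Y_k = A_k^{-1} T_k$, and $Y_k$ may be defined arbitrarily on $E_k^c$ without affecting any distributional limit.

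Next I would invoke continuity of matrix inversion. The map $M \mapsto M^{-1}$ is continuous on the open set of invertible matrices, so a further application of the continuous mapping theorem yields $A_k^{-1} \mathbf{1}_{E_k} \probto A^{-1} \defeq B$. Since $A_k^{-1}$ converges in probability to the \emph{constant} matrix $B$ while $T_k \lawto T$, the pair $(A_k^{-1}, T_k)$ converges jointly in distribution to $(B, T)$ (a deterministic first coordinate makes joint convergence automatic; this is a form of Slutsky's theorem). Applying the continuous mapping theorem once more, now to the continuous map $(M, t) \mapsto M t$, we obtain $Y_k = A_k^{-1} T_k \lawto B T$. Finally, $Y \defeq B T$ is precisely the solution of the limiting system, because $A Y = (A B) T = T$, i.e., $Y_i = \sum_{j=1}^d b_{ij} T_j$; this identifies the limiting distribution and finishes the proof.

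I do not expect a genuine obstacle here: the two points that require a little care are (i) the negligible event $E_k^c$ on which $A_k$ is singular, which is dispatched because $\prob(E_k) \to 1$, and (ii) correctly upgrading ``$A_k^{-1} \probto B$ and $T_k \lawto T$'' to joint convergence of the pair $(A_k^{-1}, T_k)$, after which the continuous mapping theorem does all the remaining work.
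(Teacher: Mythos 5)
Your proof is correct. Note, however, that the paper does not prove this lemma at all: it is quoted verbatim as Lemma 6.5.2 of \citet{lehmann2006} in the auxiliary-results appendix, so there is no internal proof to compare against. Your argument (invertibility of $A_k$ with probability tending to one via continuity of the determinant, continuity of matrix inversion at $A$, joint convergence of $(A_k^{-1}, T_k)$ because the first factor converges in probability to a constant, then the continuous mapping theorem applied to $(M,t)\mapsto Mt$) is the standard textbook route and is complete; you also implicitly corrected the typo in the statement, where the displayed solution should read $Y_i = \sum_{j=1}^d b_{ij} T_j$ rather than $T_k$.
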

We also require a multidimensional martingale central limit theorem:

\begin{lemma}[Martingale central limit theorem from~\cite{kuchler1999}]
Let \(M_{t} = (M_{t}(1), \ldots, M_{t}(d))\) be a \(d\)-dimensional square-integrable martingale with respect to a filtration \(\field_{t}\), and suppose the sample paths of \(M_{t}\) are right-continuous and have limits from the left.
Suppose there exists a family of non-random \(d \times d\) matrices \(\{K_{t}: t > 0\}\), with \(t \mapsto K_{t}\) continuous. Finally, suppose that as \(t \to \infty\), we have
\begin{itemize}
\item[(a)]
\(K_{t} \to 0\);
\item[(b)]
\(\overline{K}_{it} \expect\left[\sup_{s \leq t}  |\Delta M_{is}|\right] \to 0\) for \(i = 1, \ldots, d\),
where we define the terms \(\overline{K}_{it} = \sum_{j = 1}^{d} K_{ji, t}\) and \(\Delta M_{is} = M_{is} - M_{is-}\);
\item[(c)]
\(K_{t} Q_{t} K_{t}^{T} \probto \Xi\) in probability, where $Q_t$ is the quadratic variation matrix of $M_t$ with $(i,j)$ entry equal to
\begin{equation*}
(Q_t)_{ij} = \lim_{\|P\| \rightarrow 0} \sum_{k=1}^n (M_{t_k}(i) - M_{t_{k-1}}(i))(M_{t_k}(j) - M_{t_{k-1}}(j)),
\end{equation*}
with the limit is taken over finer and finer partitions $P$ of the interval $[0,t]$,
and $\Xi$ is a random positive semidefinite matrix; and
\item[(d)]
\(K_{t} H_{t} K_{t}^{T} \probto \Sigma\) in probability, where 
\(H_{t} \defeq \expect\left[M_{t} M_{t}^{T}\right]\) and \(\Sigma\) is positive definite.
\end{itemize}
Then we have the convergence in distribution
\[
K_{t} M_{t} \lawto Z \sim \normal_{d}(\vect{0}, \Xi).
\]
\label{thmMultiMartingaleCLT}
\end{lemma}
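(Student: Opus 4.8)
The plan is to reduce the $d$-dimensional statement to a family of scalar martingale central limit theorems via the Cram\'{e}r--Wold device, and then to apply a one-dimensional mixed-normal martingale CLT after rewriting the normalized quantity as the terminal value of a genuine martingale. Fix an arbitrary $\vect{\theta} \in \reals^{d}$, and for each fixed terminal time $T$ define the scalar process $N^{(T)}_{s} \defeq \vect{\theta}^{T} K_{T} M_{s}$ for $s \in [0,T]$. Because $K_{T}$ and $\vect{\theta}$ are deterministic, $N^{(T)}$ is a square-integrable, right-continuous martingale with left limits in the time variable $s$, and its terminal value $N^{(T)}_{T} = \vect{\theta}^{T} K_{T} M_{T}$ is exactly the $\vect{\theta}$-projection of the object $K_{T} M_{T}$ we must control. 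It therefore suffices to show that, as $T \to \infty$, $N^{(T)}_{T}$ converges to a limit whose conditional law given $\Xi$ is $\normal(0, \vect{\theta}^{T} \Xi \vect{\theta})$; once this convergence is upgraded to stable convergence (as discussed below), the collection of scalar limits identifies the joint law of the limit as $\normal_{d}(\vect{0}, \Xi)$.

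Next I would verify the hypotheses of a scalar mixed-normal martingale CLT for the array $\{N^{(T)}\}_{T}$. The quadratic variation at the terminal time is $\langle N^{(T)} \rangle_{T} = \vect{\theta}^{T} K_{T} Q_{T} K_{T}^{T} \vect{\theta}$, which by hypothesis (c) converges in probability to $\vect{\theta}^{T} \Xi \vect{\theta}$, supplying the (random) limiting variance. For the asymptotic negligibility of the jumps, I would bound
\[
\expect\!\left[\sup_{s \le T} \bigl|\Delta N^{(T)}_{s}\bigr|\right]
\le
\sum_{i=1}^{d} \bigl|(\vect{\theta}^{T} K_{T})_{i}\bigr| \, \expect\!\left[\sup_{s \le T} |\Delta M_{is}|\right],
\]
and note that each coefficient $|(\vect{\theta}^{T} K_{T})_{i}|$ is controlled by $(\max_{j} |\theta_{j}|)\,\overline{K}_{iT}$, so hypothesis (b) forces this expectation to $0$; together with $K_{T} \to 0$ from (a), ensuring each individual increment is uniformly small, this yields the conditional Lindeberg condition. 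Hypothesis (d), namely $K_{T} H_{T} K_{T}^{T} \to \Sigma$ with $\Sigma$ positive definite, enters to guarantee the uniform integrability of the normalized squared martingale and the non-degeneracy of the limit, preventing the quadratic variation and the expected square from collapsing to a rank-deficient object.

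The hard part will be the random limiting covariance $\Xi$, which makes the limit a mixed normal rather than a fixed Gaussian. Plain convergence in distribution combined with Cram\'{e}r--Wold is insufficient to pin down the joint law when the covariance is itself random, so I would work at the level of stable (conditional) convergence: concretely, I would show that for every bounded $\field$-measurable $V$ and every $u$,
\[
\expect\!\left[\exp\!\left(i u\, N^{(T)}_{T}\right) V\right]
\longrightarrow
\expect\!\left[\exp\!\left(-\tfrac{1}{2} u^{2}\, \vect{\theta}^{T} \Xi \vect{\theta}\right) V\right],
\]
which is the standard route to a mixed-normal limit for martingales whose quadratic variation converges to a random target. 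Establishing this conditional characteristic-function convergence, and in particular converting the ``in expectation'' jump control (b) into the pathwise Lindeberg estimate the scalar theorem requires --- via a truncation argument that leans on (a) for smallness and on (d) for the dominating integrability --- is the principal technical obstacle. Assembling the resulting scalar stable limits over all $\vect{\theta}$ then delivers $K_{t} M_{t} \lawto \normal_{d}(\vect{0}, \Xi)$. I note finally that in the paper's application one has $\Xi = \Sigma = I_{\infty}(\vbeta_{0})$ \emph{deterministic}, since the P\'{o}lya urn limit is non-random; in that case the mixed normal collapses to an ordinary Gaussian and the stable-convergence step is unnecessary, so the argument reduces to a classical martingale CLT.
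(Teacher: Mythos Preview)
The paper does not prove this lemma. It is stated in Appendix~G (``Auxiliary results'') as a cited result from \cite{kuchler1999}, alongside other imported facts such as Ces\`{a}ro's lemma and the Athreya branching-process theorem; the paper then invokes it as a black box in the proof of Lemma~\ref{LemConverge}. So there is no ``paper's own proof'' to compare your proposal against.

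That said, your outline is a reasonable sketch of how such a theorem is typically established: reduce to one dimension via Cram\'{e}r--Wold, check the quadratic-variation and jump-negligibility hypotheses of a scalar martingale CLT, and upgrade to stable convergence because the limiting covariance $\Xi$ may be random. Your closing remark is also apt: in the paper's actual application $\Xi = I_{\infty}(\vbeta_{0})$ is deterministic (this is exactly the content of Lemma~\ref{lemmaUrnConvergence} and Proposition~\ref{lemmaLimitExpt}), so the mixed-normal subtlety evaporates and an ordinary martingale CLT would suffice. If you intend to supply a full proof, the place where your sketch is thinnest is the passage from the expectation-level jump bound in (b) to a genuine conditional Lindeberg condition; this is where the real work lies in the K\"{u}chler--S{\o}rensen argument, and you have correctly flagged it as the principal obstacle without resolving it.
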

Finally, we state a law of large numbers for sums of bounded martingale differences. The proof is an easy consequence of the Azuma-Hoeffding inequality.

\begin{lemma}[Law of large numbers for bounded martingale differences]
Let \(\Delta_{1}, \ldots, \Delta_{k}\) be a martingale difference sequence with increments bounded by a constant \(B\).
Then
\[
\frac{1}{k} \sum_{t = 1}^{k} \Delta_{t} \asto 0.
\]
\label{thmLLNMartingales}
\end{lemma}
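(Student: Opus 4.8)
The plan is to deduce the claim from the Azuma--Hoeffding inequality together with the first Borel--Cantelli lemma. I interpret the statement in the natural way: $\Delta_1, \Delta_2, \ldots$ is an infinite martingale difference sequence adapted to some filtration $\{\field_t\}$, with $|\Delta_t| \le B$ almost surely for every $t$, and the conclusion is the almost-sure convergence $\frac{1}{k}\sum_{t=1}^{k} \Delta_t \asto 0$ as $k \to \infty$.

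First I would fix $\epsilon > 0$ and set $S_k \defeq \sum_{t=1}^{k} \Delta_t$. Then $(S_k)_{k \ge 0}$ is a martingale with $S_0 = 0$ and bounded increments $|S_t - S_{t-1}| = |\Delta_t| \le B$, so the Azuma--Hoeffding inequality gives
\[
\prob\left(|S_k| \ge \epsilon k\right)
\;\le\;
2 \exp\left(-\frac{(\epsilon k)^2}{2 k B^2}\right)
\;=\;
2 \exp\left(-\frac{\epsilon^2 k}{2 B^2}\right).
\]
The right-hand side is summable in $k$, so $\sum_{k=1}^{\infty} \prob(|S_k| \ge \epsilon k) < \infty$. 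By Borel--Cantelli, the event $\{|S_k/k| \ge \epsilon\}$ occurs for only finitely many $k$ almost surely; equivalently, $\limsup_{k\to\infty} |S_k/k| \le \epsilon$ almost surely. Intersecting these almost-sure events over $\epsilon = 1/m$, $m \in \mathbb{N}$, gives an almost-sure event on which $\limsup_{k\to\infty}|S_k/k| \le 1/m$ for every $m$, hence $S_k/k \to 0$.

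There is essentially no substantial obstacle here; the only points requiring care are minor: (i) the increment bound $B$ is deterministic, which is exactly the hypothesis and is what makes the Azuma--Hoeffding variance proxy $\sum_{t \le k} B^2 = kB^2$ linear in $k$ — this linearity is precisely what yields summability after the normalization by $k$; and (ii) the routine passage from ``$\limsup \le \epsilon$ for each $\epsilon$'' to the actual limit via a countable intersection. An alternative route, avoiding Azuma--Hoeffding, is to observe that $\sum_{t} \Delta_t/t$ is an $L^2$-bounded martingale, since orthogonality of martingale differences gives $\expect[(\sum_{t \le k} \Delta_t/t)^2] = \sum_{t \le k} \expect[\Delta_t^2]/t^2 \le B^2 \sum_{t} 1/t^2 < \infty$; the series therefore converges almost surely, and Kronecker's lemma then yields $\frac{1}{k}\sum_{t \le k}\Delta_t \to 0$. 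Either way the proof is only a few lines.
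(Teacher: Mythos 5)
Your proof is correct and follows essentially the same route as the paper's: Azuma--Hoeffding applied to the bounded-increment martingale $S_k$, summability of the tail bound, and Borel--Cantelli (with your added, routine $\epsilon = 1/m$ intersection making the almost-sure limit explicit). Your exponent $\exp\bigl(-\epsilon^2 k/(2B^2)\bigr)$ is in fact the correctly normalized constant, whereas the paper writes $2B$ in the denominator; this is immaterial to the argument.
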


\begin{proof}
The proof is an application of the Azuma-Hoeffding inequality and the Borel-Cantelli lemma.
By the Azuma-Hoeffding inequality, we have
\[
\prob\left\{ \biggr|\sum_{t = 1}^{k} \Delta_{t} \biggr| > \epsilon k\right\}
\leq 
2 \exp
\left(
-\frac{\epsilon^{2} k^{2}}{2Bk}
\right)
=
2
\exp
\left(
-\frac{\epsilon^{2} k}{2B}
\right).
\]
Summing over all \(k\) yields a finite sum, so by the Borel-Cantelli lemma, 
our sum converges to \(0\), almost surely.
\end{proof}

Finally, we state a theorem from \cite{athreya1968} on the almost sure convergence of branching processes.
Let \(X_{t}\) be a multitype continuous time Markov branching process.
Let \(A\) be the inifinitestimal generator of the mean matrix semigroup \(\{M(t): t \geq 0\}\), where 
\[
M_{ij}(t) = \expect\left[X_{j}(t) | X_{r}(0) = \delta_{ri}, r = 1, \ldots, m\right],
\]
and the \(\delta_{ri}\)'s are Kronecker deltas.
We assume the process is positive regular; i.e., \(A\) is irreducible and nonsingular.
\begin{theorem}[Theorem~1 of \cite{athreya1968}]
If the first moments exist, we have
\[
\lim _{t \to \infty} X_{t} e^{-\lambda_{1} t} = X_{\infty} \vect{v},
\]
almost surely, where \(X_{\infty}\) is a nonnegative random variable, \(\lambda_{1}\) is the maximal eigenvalue of \(A\), and \(\vect{v}\) is the normalized leading left eigenvector of \(A\).
\label{thm1Athreya}
\end{theorem}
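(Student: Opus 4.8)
The plan is to reduce the vector convergence to the behavior of finitely many scalar martingales obtained by projecting $X_t$ onto the eigenvectors of $A$. First I would exploit positive regularity: since $A$ is irreducible, the Perron--Frobenius theorem applied to the mean semigroup $M(t) = e^{tA}$ furnishes a simple, real, maximal eigenvalue $\lambda_1$, a strictly positive right eigenvector $\vect{u}$ (with $A\vect{u} = \lambda_1\vect{u}$), and a strictly positive left eigenvector $\vect{v}$ (with $\vect{v}A = \lambda_1\vect{v}$), which I normalize so that $\vect{v}\vect{u} = 1$. Crucially, every other eigenvalue $\lambda$ of $A$ satisfies $\operatorname{Re}(\lambda) < \lambda_1$, which provides the spectral gap that drives the argument.

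The leading direction is handled by a single martingale. I would set $W_t = e^{-\lambda_1 t}\, X_t\vect{u}$ and verify, using the branching/semigroup identity $\expect[X_{t+s}\mid\mathcal{F}_t] = X_t M(s)$ together with $M(s)\vect{u} = e^{\lambda_1 s}\vect{u}$, that $W_t$ is a nonnegative martingale with respect to the natural filtration. By the martingale convergence theorem it converges almost surely to a nonnegative limit, which I take to be $X_\infty$. This already pins down the coefficient in the claimed limit, since projecting the target $X_\infty\vect{v}$ onto $\vect{u}$ returns $X_\infty$ by the normalization $\vect{v}\vect{u}=1$.

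The substance of the proof is showing that the transverse components vanish after the $e^{-\lambda_1 t}$ normalization. For each remaining (possibly complex or generalized) eigenvector $\vect{u}_j$ with eigenvalue $\lambda_j$, the process $e^{-\lambda_j t}\, X_t\vect{u}_j$ is again a martingale, so its size is governed by its second moment. I would write down the first- and second-moment differential equations for the branching process---linear systems driven by $A$ and the offspring covariance structure, finite precisely because the first (and effectively second) moments are assumed to exist---and solve them to obtain a bound of the form $\expect\!\left|X_t\vect{u}_j\right|^2 = O\!\left(e^{2\operatorname{Re}(\lambda_j)t} + e^{\lambda_1 t}\right)$. Since $\operatorname{Re}(\lambda_j) < \lambda_1$, this gives $e^{-\lambda_1 t} X_t\vect{u}_j \to 0$ in $L^2$, hence almost surely along a discrete grid of times by Borel--Cantelli. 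Expanding the vector as $X_t = \sum_j (X_t\vect{u}_j)\,\vect{v}_j$ in the dual basis of left eigenvectors $\{\vect{v}_j\}$ (normalized so that $\vect{v}_i\vect{u}_j = \delta_{ij}$) and combining the leading martingale limit with the vanishing transverse projections then yields $e^{-\lambda_1 t} X_t \to X_\infty\vect{v}$ componentwise; this expansion is also what makes the limiting direction the \emph{left} eigenvector $\vect{v}$, matching the statement.

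The main obstacle I anticipate is upgrading the subsequential almost-sure convergence of the transverse terms to genuine almost-sure convergence in continuous time, since the second-moment estimate only directly controls a countable set of times. I would close this gap with a maximal inequality applied to the transverse martingales---or, equivalently, by passing to the embedded jump chain and bounding the oscillation of $X_t$ between consecutive splits using the same moment bounds---so that fluctuations between grid points are negligible against the $e^{\lambda_1 t}$ normalization. The nonnegativity of $X_\infty$ is immediate from that of $W_t$, and its nondegeneracy (so that the limit is genuinely random rather than identically zero) follows from uniform integrability of the leading martingale, again a consequence of the second-moment hypothesis.
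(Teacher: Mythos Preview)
The paper does not prove this statement at all: it is quoted verbatim as Theorem~1 of \cite{athreya1968} in Appendix~\ref{appOtherTheorems} and used as a black box (specifically, inside the proof of Lemma~\ref{lemmaUrnConvergence}). There is therefore no ``paper's own proof'' to compare against; the authors simply invoke the result from the literature.

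As for your sketch on its own merits, the martingale-plus-spectral-decomposition strategy you outline is the standard route to results of this type, and the leading-direction argument (that $e^{-\lambda_1 t}X_t\vect{u}$ is a nonnegative martingale converging to $X_\infty$) is correct. The weak point is the transverse part: the stated hypothesis is only that \emph{first} moments exist, yet your control of the non-Perron projections rests on an $L^2$ bound of the form $\expect|X_t\vect{u}_j|^2 = O(e^{2\operatorname{Re}(\lambda_j)t}+e^{\lambda_1 t})$, which requires finite second moments of the offspring distribution. Your parenthetical ``first (and effectively second) moments'' glosses over exactly the gap Athreya's paper has to work to close. Under first moments alone one needs a different mechanism---e.g.\ truncation combined with a Kesten--Stigum-type argument, or the additive-functional techniques Athreya actually employs---to show the transverse components are $o(e^{\lambda_1 t})$ almost surely. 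So your plan proves the theorem under a stronger hypothesis than the one stated, and would need nontrivial additional work to match Athreya's assumptions.
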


%
%

\section{Simulation results}
\label{appSimulationResults}
\makeatletter
\afterpage{\global\setlength\@fpsep{\textheight}}
\makeatother

In this Appendix, we provide tables of results from the simulations discussed in Section~\ref{secSims}.
\subsection{Simulations on a directed cycle}
\label{secSimCycle}

In this subsection, we present results for a directed cycle without loops.
The first set of results are in Tables~\ref{tableCycle5075RMSE}, \ref{tableCycle50CI}, \ref{tableCycle75CI}, and \ref{tableCycle5075time}.
For each set of parameters \((n, d, k)\), we conducted \(500\) simulations,
For each simulation, the edge covariates were independent and identically distributed samples from a \(\normal_{d}(0, (0.01)^{2}I_{d})\) distribution.
Most of the data follow the general trends outlined in Section~\ref{secSims}.

For the results in Table~\ref{tableCycle2RMSE}, simulations were similarly conducted on a directed cycle without loops, of size $n = 2$ and dimension $d=1$. One possibly surprising observation is that the error is higher for a given \(k\) when \(n = 2\) than when \(n = 50\) or \(n = 75\). This is likely due to the larger variation in edge weights, owing to having more edges when \(n\) is larger. Indeed, we examine the trimmed root mean squared error, where we ignore estimates that differ from the true \(\sbeta\) by more than \(10\) and the proportion of trimmed runs. It is likely that a few runs had very large differences in the two edge weights, leading to large variance in the realized process and therefore also the estimates.

\begin{table*}[p]
\centering
\ra{1.3}
\begin{tabular}{@{}llllllll@{}}\toprule
dimension & infection size  & 
\multicolumn{6}{c}{root-mean squared error by method, \(n = 50\) and \(n = 75\)} \\
\cmidrule{1-8}
&&
emp & mle & gw &
emp & mle & gw\\ \midrule
\(d = 1\) 
& \(k = 25\)    & 62.7 & 33.3 & 153.3 & 63.6 & 33.0 & 69.4 \\
& \(k = 50\)    & 47.5 & 22.1 & 30.02 & 51.0 & 20.7 & 36.0 \\
& \(k = 100\)   & 44.7 & 14.2 & 22.30 & 40.5 & 13.4 & 19.4 \\
& \(k = 250\)   & 34.1 & 7.87 & 13.36 & 36.9 & 8.14 & 16.4 \\
& \(k = 400\)   & 30.9 & 6.33 & 14.31 & 34.7 & 6.94 & 12.9 \\
& \(k = 500\)   & 29.9 & 5.53 & 13.57 & 32.4 & 5.68 & 11.9 \\
& \(k = 1,000\) & 28.3 & 4.06 & 9.911 & 29.3 & 4.24 & 9.13 \\
\midrule
\(d = 5\) 
& \(k = 25\)    & 1498.7 & 4528.4 & 1591.8 & 1549.7 & 5488.8 & 1494.9 \\
& \(k = 50\)    & 1056.8 & 1656.5 & 570.47 & 4498.8 & 1613.0 & 1442.8 \\
& \(k = 100\)   & 304.33 & 120.86 & 146.86 & 262.46 & 80.586 & 125.06 \\
& \(k = 250\)   & 123.33 & 36.652 & 45.479 & 121.55 & 34.490 & 63.077 \\
& \(k = 400\)   & 105.68 & 23.679 & 38.406 & 106.96 & 24.524 & 37.384 \\
& \(k = 500\)   & 97.159 & 20.682 & 35.049 & 93.737 & 20.799 & 35.575 \\
& \(k = 1,000\) & 83.533 & 12.757 & 25.754 & 83.140 & 13.212 & 25.527 \\
\midrule
\(d = 10\) 
& \(k = 25\)    & 6099.27 & 1356.9 & 6584.8 & 14039.1 & 990.757 & 4766.9 \\
& \(k = 50\)    & 10207.9 & 1418.7 & 6974.6 & 8688.13 & 1332.34 & 4778.0 \\
& \(k = 100\)   & 7784.28 & 2638.7 & 6531.1 & 34585.3 & 43045.9 & 7574.2 \\
& \(k = 250\)   & 3337.33 & 4358.7 & 2077.5 & 4838.75 & 2021.39 & 1899.7 \\
& \(k = 400\)   & 4276.97 & 1734.4 & 2214.3 & 10704.9 & 45498.8 & 3200.9 \\
& \(k = 500\)   & 3949.34 & 3448.2 & 1163.7 & 2299.84 & 725.327 & 1086.9 \\
& \(k = 1,000\) & 346.715 & 100.61 & 138.31 & 638.762 & 140.987 & 448.33 \\
\midrule
\(d = 20\) 
& \(k = 25\)    & 30994.25 & 1329.6 & 14947.0 & 19798.2 & 3032.4 & 15069.6 \\
& \(k = 50\)    & 18767.91 & 608.26 & 28866.3 & 9267.94 & 579.65 & 45271.8 \\
& \(k = 100\)   & 23701.71 & 793.28 & 20060.8 & 32806.8 & 493.67 & 17890.7 \\
& \(k = 250\)   & 17466.70 & 426.78 & 17549.8 & 26442.2 & 429.01 & 26400.5 \\
& \(k = 400\)   & 56383.87 & 440.49 & 63315.1 & 14160.4 & 427.40 & 20762.7 \\
& \(k = 500\)   & 242865.8 & 474.49 & 80426.8 & 34631.4 & 416.54 & 60524.4 \\
& \(k = 1,000\) & 15206.20 & 509.48 & 18019.2 & 11530.0 & 484.18 & 8915.05 \\
\bottomrule
\end{tabular}
\caption{Root mean squared error for \(\vbeta\) via the empirical distribution, the maximum likelihood estimator, and the general weights methods on a directed cycle, denoted by emp, mle, and gw, respectively.
The graph consists of \(n\) vertices, and the covariates are independent, identically distributed \(\normal_{d}(0, (0.01)^{2}I_{d})\) random variables.
For each simulation, \(k\) vertices are infected.
For each \(n\), \(d\), and \(k\), there are \(500\) simulations of the process.
In general, the maximum likelihood estimator seems to perform the best, followed by general weights and then the empirical estimators.
}
\label{tableCycle5075RMSE}
\end{table*}

\begin{table*}[p]
\centering
\ra{1.3}
\begin{tabular}{@{}llllll@{}}\toprule
dim.\ & inf.\ size  & 
\multicolumn{4}{c}{Confidence interval performance, \(n = 50\)} \\
\cmidrule{1-6}
&&
n.e.\ & cov.\ & avg.\ len. & nec.\ len. \\ \midrule
\(d = 1\) 
& \(k = 25\)    & 0 & 70 & \(5.79 \times 10^{2}\) & \(1.25 \times 10^{2}\)  \\
& \(k = 50\)    & 0 & 60 & \(3.36 \times 10^{1}\) & \(8.65 \times 10^{1}\)  \\
& \(k = 100\)   & 0 & 55 & \(2.00 \times 10^{1}\) & \(5.69 \times 10^{1}\)  \\
& \(k = 250\)   & 0 & 47 & \(9.84 \times 10^{0}\) & \(3.12 \times 10^{1}\)  \\
& \(k = 400\)   & 0 & 41 & \(6.76 \times 10^{0}\) & \(2.65 \times 10^{1}\)  \\
& \(k = 500\)   & 0 & 41 & \(5.85 \times 10^{0}\) & \(2.18 \times 10^{1}\)  \\
& \(k = 1,000\) & 0 & 31 & \(3.38 \times 10^{0}\) & \(1.64 \times 10^{1}\)  \\
\midrule
\(d = 5\) 
& \(k = 25\)    & 7 & 51 & \(1.04 \times 10^{7}\) & \(4.26 \times 10^{3}\)  \\
& \(k = 50\)    & 1 & 48 & \(1.28 \times 10^{2}\) & \(1.14 \times 10^{3}\)  \\
& \(k = 100\)   & 0 & 50 & \(3.59 \times 10^{1}\) & \(1.54 \times 10^{2}\)  \\
& \(k = 250\)   & 0 & 51 & \(1.54 \times 10^{1}\) & \(5.88 \times 10^{1}\)  \\
& \(k = 400\)   & 0 & 39 & \(1.06 \times 10^{1}\) & \(4.39 \times 10^{1}\)  \\
& \(k = 500\)   & 0 & 39 & \(8.50 \times 10^{0}\) & \(3.49 \times 10^{1}\)  \\
& \(k = 1,000\) & 0 & 35 & \(4.75 \times 10^{0}\) & \(2.16 \times 10^{1}\)  \\
\midrule
\(d = 10\) 
& \(k = 25\)    & 47 & 45 & \(3.84 \times 10^{9}\) & \(9.89 \times 10^{2}\)  \\
& \(k = 50\)    & 45 & 46 & \(1.97 \times 10^{9}\) & \(1.33 \times 10^{3}\)  \\
& \(k = 100\)   & 28 & 59 & \(3.84 \times 10^{8}\) & \(1.35 \times 10^{3}\)  \\
& \(k = 250\)   & 3  & 41 & \(7.05 \times 10^{6}\) & \(1.15 \times 10^{3}\)  \\
& \(k = 400\)   & 2  & 35 & \(2.41 \times 10^{5}\) & \(1.02 \times 10^{3}\) \\
& \(k = 500\)   & 1  & 31 & \(3.34 \times 10^{4}\) & \(7.95 \times 10^{2}\)  \\
& \(k = 1,000\) & 0  & 32 & \(1.07 \times 10^{1}\) & \(7.17 \times 10^{1}\)  \\
\midrule
\(d = 20\) 
& \(k = 25\)    & 47 & 53 & \(7.75 \times 10^{9}\) & \(4.47 \times 10^{2}\)  \\
& \(k = 50\)    & 51 & 49 & \(4.59 \times 10^{9}\) & \(4.34 \times 10^{2}\)  \\
& \(k = 100\)   & 49 & 51 & \(3.20 \times 10^{9}\) & \(4.68 \times 10^{2}\)  \\
& \(k = 250\)   & 49 & 51 & \(1.17 \times 10^{9}\) & \(3.69 \times 10^{2}\)  \\
& \(k = 400\)   & 46 & 54 & \(7.18 \times 10^{8}\) & \(4.09 \times 10^{2}\)  \\
& \(k = 500\)   & 51 & 49 & \(5.11 \times 10^{8}\) & \(3.60 \times 10^{2}\)  \\
& \(k = 1,000\) & 50 & 50 & \(2.22 \times 10^{8}\) & \(3.98 \times 10^{2}\)  \\
\bottomrule
\end{tabular}
\caption{Confidence interval performance on a directed cycle graph for the first coordinate of \(\vbeta\). The columns are n.e.\ for the percent of runs resulting in numerical errors, cov.\ for the percent of runs where the 95\% confidence interval contains \(\vbeta\), avg.\ len.\ for the average confidence interval length, and nec.\ len.\ for the $95^\text{th}$ quantile of the absolute distance of \(\hat{\vbeta}_{k}(1)\) from \(\vbeta(1)\).
The covariates are independent, identically distributed \(\normal_{d}(0, (0.01)^{2}I_{d})\) random variables.
For each \(d\) and \(k\), there are \(500\) simulations of the process.}
\label{tableCycle50CI}
\end{table*}

\begin{table*}[p]
\centering
\ra{1.3}
\begin{tabular}{@{}llllll@{}}\toprule
dim.\ & inf.\ size  & 
\multicolumn{4}{c}{Confidence interval performance, \(n = 75\)} \\
\cmidrule{1-6}
&&
n.e.\ & cov.\ & avg.\ len. & nec.\ len. \\ \midrule
\(d = 1\) 
& \(k = 25\)    & 0 & 70 & \(5.85 \times 10^{1}\) & \(1.40 \times 10^{2}\) \\
& \(k = 50\)    & 0 & 63 & \(3.43 \times 10^{1}\) & \(8.13 \times 10^{1}\) \\
& \(k = 100\)   & 0 & 56 & \(1.98 \times 10^{1}\) & \(5.29 \times 10^{1}\) \\
& \(k = 250\)   & 0 & 48 & \(9.79 \times 10^{0}\) & \(3.30 \times 10^{1}\) \\
& \(k = 400\)   & 0 & 39 & \(7.00 \times 10^{0}\) & \(2.79 \times 10^{1}\) \\
& \(k = 500\)   & 0 & 42 & \(5.77 \times 10^{0}\) & \(2.22 \times 10^{1}\) \\
& \(k = 1,000\) & 0 & 34 & \(3.38 \times 10^{0}\) & \(1.57 \times 10^{1}\) \\
\midrule
\(d = 5\) 
& \(k = 25\)     & 7 & 51 & \(4.53 \times 10^{8}\) & \(5.08 \times 10^{3}\) \\
& \(k = 50\)     & 2 & 50 & \(1.28 \times 10^{2}\) & \(1.23 \times 10^{3}\) \\
& \(k = 100\)    & 0 & 53 & \(3.66 \times 10^{1}\) & \(1.42 \times 10^{2}\) \\
& \(k = 250\)    & 0 & 45 & \(1.57 \times 10^{1}\) & \(6.63 \times 10^{1}\) \\
& \(k = 400\)    & 0 & 38 & \(1.06 \times 10^{1}\) & \(4.45 \times 10^{1}\) \\
& \(k = 500\)    & 0 & 37 & \(8.50 \times 10^{0}\) & \(3.95 \times 10^{1}\) \\
& \(k = 1,000\)  & 0 & 36 & \(4.95 \times 10^{0}\) & \(2.33 \times 10^{1}\) \\
\midrule
\(d = 10\) 
& \(k = 25\)    & 47 & 43 & \(3.53 \times 10^{9}\) & \(9.79 \times 10^{2}\) \\
& \(k = 50\)    & 45 & 48 & \(1.76 \times 10^{10}\) & \(1.27 \times 10^{3}\) \\
& \(k = 100\)   & 27 & 57 & \(5.12 \times 10^{8}\) & \(1.43 \times 10^{3}\) \\
& \(k = 250\)   & 4  & 44 & \(2.97 \times 10^{7}\) & \(1.52 \times 10^{3}\) \\
& \(k = 400\)   & 1  & 34 & \(1.05 \times 10^{5}\) & \(1.23 \times 10^{3}\)\\
& \(k = 500\)   & 0  & 35 & \(1.03 \times 10^{4}\) & \(3.97 \times 10^{2}\) \\
& \(k = 1,000\) & 0  & 34 & \(1.24 \times 10^{1}\) & \(7.79 \times 10^{1}\) \\
\midrule
\(d = 20\) 
& \(k = 25\)    & 45 & 54 & \(7.57 \times 10^{9}\) & \(5.40 \times 10^{2}\) \\
& \(k = 50\)    & 51 & 49 & \(4.85 \times 10^{9}\) & \(4.82 \times 10^{2}\) \\
& \(k = 100\)   & 55 & 45 & \(3.02 \times 10^{9}\) & \(4.57 \times 10^{2}\) \\
& \(k = 250\)   & 46 & 54 & \(1.09 \times 10^{9}\) & \(3.70 \times 10^{2}\) \\
& \(k = 400\)   & 47 & 53 & \(5.75 \times 10^{8}\) & \(3.90 \times 10^{2}\) \\
& \(k = 500\)   & 51 & 49 & \(4.53 \times 10^{8}\) & \(3.86 \times 10^{2}\) \\
& \(k = 1,000\) & 54 & 46 & \(2.19 \times 10^{8}\) & \(4.03 \times 10^{2}\) \\
\bottomrule
\end{tabular}
\caption{Confidence interval performance on a directed cycle graph for the first coordinate of \(\vbeta\). The columns are n.e.\ for the percent of runs resulting in numerical errors, cov.\ for the percent of runs where the 95\% confidence interval contains \(\vbeta\), avg.\ len.\ for the average confidence interval length, and nec.\ len.\ for the $95^\text{th}$ quantile of the absolute distance of \(\hat{\vbeta}_{k}(1)\) from \(\vbeta(1)\).
The covariates are independent, identically distributed \(\normal_{d}(0, (0.01)^{2}I_{d})\) random variables.
For each \(d\) and \(k\), there are \(500\) simulations of the process.}
\label{tableCycle75CI}
\end{table*}

\begin{table*}[p]
\centering
\ra{1.3}
\begin{tabular}{@{}llllllll@{}}\toprule
dimension & infection size  & 
\multicolumn{6}{c}{average time (s) by method, \(n = 50\) and \(n = 75\)} \\
\cmidrule{1-8}
&&
emp & mle & gw &
emp & mle & gw\\ \midrule
\(d = 1\) 
& \(k = 25\)    & 0.003 & 0.200 & 0.204 & 0.004 & 0.197 & 0.208 \\
& \(k = 50\)    & 0.006 & 0.491 & 0.460 & 0.006 & 0.484 & 0.467 \\
& \(k = 100\)   & 0.011 & 1.355 & 1.163 & 0.012 & 1.332 & 1.157 \\
& \(k = 250\)   & 0.026 & 4.795 & 4.060 & 0.028 & 4.832 & 4.170 \\
& \(k = 400\)   & 0.041 & 9.260 & 8.164 & 0.044 & 9.360 & 8.355 \\
& \(k = 500\)   & 0.052 & 12.66 & 11.19 & 0.054 & 12.41 & 11.06 \\
& \(k = 1,000\) & 0.103 & 31.19 & 26.83 & 0.112 & 32.90 & 30.26 \\
\midrule
\(d = 5\) 
& \(k = 25\)    & 0.003 & 0.261 & 0.191 & 0.004 & 0.258 & 0.195 \\ 
& \(k = 50\)    & 0.006 & 0.524 & 0.451 & 0.006 & 0.527 & 0.471 \\
& \(k = 100\)   & 0.011 & 1.321 & 1.153 & 0.012 & 1.294 & 1.175 \\
& \(k = 250\)   & 0.026 & 4.620 & 4.098 & 0.028 & 4.649 & 4.214 \\
& \(k = 400\)   & 0.042 & 9.159 & 8.370 & 0.044 & 9.164 & 8.421 \\
& \(k = 500\)   & 0.051 & 12.24 & 11.00 & 0.054 & 12.14 & 11.02 \\
& \(k = 1,000\) & 0.105 & 32.13 & 27.84 & 0.109 & 31.09 & 27.35 \\
\midrule
\(d = 10\) 
& \(k = 25\)    & 0.003 & 0.289 & 0.200 & 0.004 & 0.295 & 0.211 \\
& \(k = 50\)    & 0.006 & 0.760 & 0.455 & 0.007 & 0.764 & 0.475 \\
& \(k = 100\)   & 0.011 & 2.039 & 1.148 & 0.012 & 2.022 & 1.172 \\
& \(k = 250\)   & 0.026 & 6.351 & 4.061 & 0.028 & 6.658 & 4.231 \\
& \(k = 400\)   & 0.041 & 10.64 & 8.177 & 0.044 & 10.98 & 8.308 \\
& \(k = 500\)   & 0.051 & 13.84 & 11.16 & 0.054 & 13.15 & 11.16 \\
& \(k = 1,000\) & 0.105 & 32.36 & 27.64 & 0.109 & 32.72 & 27.91 \\
\midrule
\(d = 20\) 
& \(k = 25\)    & 0.003 & 0.294 & 0.200 & 0.004 & 0.294 & 0.205 \\
& \(k = 50\)    & 0.006 & 0.714 & 0.462 & 0.007 & 0.705 & 0.475 \\
& \(k = 100\)   & 0.011 & 1.872 & 1.145 & 0.012 & 1.949 & 1.213 \\
& \(k = 250\)   & 0.026 & 7.261 & 4.201 & 0.028 & 7.192 & 4.306 \\
& \(k = 400\)   & 0.042 & 14.41 & 8.357 & 0.044 & 14.21 & 8.332 \\
& \(k = 500\)   & 0.051 & 19.69 & 11.11 & 0.056 & 19.87 & 11.41 \\
& \(k = 1,000\) & 0.106 & 52.78 & 28.25 & 0.110 & 51.91 & 28.04 \\
\bottomrule
\end{tabular}
\caption{Average time in seconds for estimating \(\vbeta\) via projecting the empirical distribution, the maximum likelihood estimator, and the general weights methods on a directed cycle, denoted emp, mle, and gw, respectively.
The covariates are independent, identically distributed \(\normal_{d}(0, (0.01)^{2}I_{d})\) random variables. 
For each simulation, \(k\) vertices are infected.
For each \(n\), \(d\), and \(k\), there are \(500\) simulations of the process.}
\label{tableCycle5075time}
\end{table*}

\begin{table*}[p]
\centering
\ra{1.3}
\begin{tabular}{lc@{\hskip 1.5in}llllll@{}}\toprule
\multicolumn{6}{c}{root-mean squared error by method, \(n = 2\) and \(d = 1\)} 
 \\ \midrule
infection size  
&& emp & fp & mle & gw \\ \midrule
\(k = 25\)    && 6786 & 7117 & 4661 & 4661 \\
\(k = 50\)    && 1233 & 1134 & 710  & 710 \\
\(k = 100\)   && 624  & 599  & 468  & 468 \\
\(k = 250\)   && 1503 & 1776 & 250  & 1754 \\
\(k = 400\)   && 123  & 126  & 150  & 150 \\
\(k = 500\)   && 174  & 170  & 270  & 270 \\
\(k = 1,000\) && 204  & 205  & 267  & 267 \\ 
\toprule
\multicolumn{6}{c}{trimmed root-mean squared error by method, \(n = 2\) and \(d = 1\)} 
 \\ \midrule
infection size  
&& emp & fp & mle & gw \\ \midrule
\(k = 25\)    && 5.8 & 2.9 & 5.4 & 5.4 \\
\(k = 50\)    && 6.0 & 6.3 & 5.3 & 5.3 \\
\(k = 100\)   && 5.7 & 5.7 & 5.3 & 5.3 \\
\(k = 250\)   && 5.1 & 5.2 & 5.2 & 5.2 \\
\(k = 400\)   && 5.2 & 5.0 & 5.1 & 5.1 \\
\(k = 500\)   && 4.9 & 4.9 & 4.8 & 4.8 \\
\(k = 1,000\) && 4.3 & 4.4 & 4.3 & 4.3 \\ 
\toprule
\multicolumn{6}{c}{percent remaining after trimming, \(n = 2\) and \(d = 1\)} 
 \\ \midrule
infection size  
&& emp & fp & mle & gw \\ \midrule
\(k = 25\)    && 17 & 27 & 20 & 20 \\
\(k = 50\)    && 28 & 27 & 32 & 32 \\
\(k = 100\)   && 36 & 35 & 35 & 35 \\
\(k = 250\)   && 47 & 48 & 53 & 53 \\
\(k = 400\)   && 56 & 56 & 61 & 61 \\
\(k = 500\)   && 61 & 61 & 67 & 67 \\
\(k = 1,000\) && 71 & 71 & 74 & 74 \\
\bottomrule
\end{tabular}
\caption{Root mean squared error, trimmed root mean squared error, and percent of parameters that were kept during trimming. 
The trimming eliminated estimates that were more than \(10\) from the actual value of \(\beta\).
The estimators used were the empirical distribution, fixed point, maximum likelihood, and general weights, denoted by emp, fp, mle, and gw, respectively.
The fixed point method behaves similarly to the empirical distribution estimator.}
\label{tableCycle2RMSE}
\end{table*}

\subsection{Simulations on a directed cycle with loops}
\label{secSimCycleLoops}

In this subsection, we present results for a directed cycle with loops.
The results are in Tables~\ref{tableCycleLoops5075RMSE}, \ref{tableCycleLoops50CI},
\ref{tableCycleLoops75CI}, and \ref{tableCycleLoops5075time}.
For each set of parameters \((n, d, k)\), we conducted \(500\) simulations.
For each simulation, the first \(d\) coordinates of edge covariates for inter-vertex edges were independent, identically distributed samples from a \(\normal_{d}(0, (0.01)^{2}I_{d})\) distribution, and the last coordinate was \(0\).
For self-loops, all of the covariates were the \((d + 1)^\text{th}\) standard basis vector \(\vect{e}_{d + 1}\).
Compared to the data without self-loops, the error is much higher.
This is likely due to the presence of more parameters and the high chance of infections following self-loops.
Note that  the Ebola data contains about \(10\) times more infections with self-loops than without.

\begin{table*}
\centering
\ra{1.3}
\begin{tabular}{@{}llllllll@{}}\toprule
dimension & infection size  & 
\multicolumn{6}{c}{root-mean squared error by method, \(n = 50\) and \(n = 75\)} \\
\cmidrule{1-8}
&&
emp & mle & gw &
emp & mle & gw\\ \midrule
\(d = 1\) 
& \(k = 100\)   & 110.1 & 58.9 & 285.4 & 130.9 & 42.8 & 227.0 \\
& \(k = 250\)   & 84.18 & 26.2 & 152.8 & 70.16 & 19.4 & 118.1 \\
& \(k = 400\)   & 66.01 & 18.1 & 110.3 & 69.51 & 18.7 & 120.2 \\
& \(k = 500\)   & 64.61 & 16.1 & 77.76 & 64.81 & 14.6 & 96.31 \\
& \(k = 1,000\) & 53.63 & 10.1 & 67.49 & 59.29 & 11.1 & 86.65 \\
& \(k = 2,000\) & 51.63 & 7.26 & 59.20 & 47.95 & 6.49 & 68.86 \\
& \(k = 3,000\) & 43.76 & 5.75 & 53.97 & 45.90 & 6.11 & 58.61 \\
 \midrule
\(d = 2\) 
& \(k = 100\)   & 1915.0 & 2606.2 & 1738.9 & 696.0 & 1393.9 & 1291.4 \\
& \(k = 250\)   & 152.71 & 48.131 & 264.61 & 151.1 & 54.737 & 301.33 \\
& \(k = 400\)   & 131.07 & 48.161 & 179.14 & 122.0 & 39.128 & 275.00 \\
& \(k = 500\)   & 115.37 & 29.372 & 161.30 & 109.0 & 30.207 & 209.60 \\
& \(k = 1,000\) & 92.460 & 25.010 & 118.57 & 86.94 & 20.156 & 95.640 \\
& \(k = 2,000\) & 84.664 & 13.354 & 96.076 & 81.05 & 13.153 & 73.388 \\
& \(k = 3,000\) & 73.615 & 11.111 & 77.843 & 77.21 & 10.283 & 78.122 \\
 \midrule
\(d = 5\)                                              
& \(k = 100\)   & 3466.99 & 7093.8 & 5608.55 & 7703.3 & 257324.7 & 13490.1 \\
& \(k = 250\)   & 4259.41 & 5751.7 & 19764.1 & 3885.0 & 43619.85 & 4995.12 \\
& \(k = 400\)   & 6564.88 & 5735.5 & 2780.33 & 2277.3 & 6861.845 & 6519.96 \\
& \(k = 500\)   & 14521.5 & 1980.9 & 5271.67 & 3191.2 & 2347.777 & 2185.80 \\
& \(k = 1,000\) & 2344.93 & 3317.8 & 1257.38 & 1674.2 & 1583.915 & 1538.17 \\
& \(k = 2,000\) & 909.887 & 210.43 & 484.895 & 412.32 & 498.417 & 580.296 \\
& \(k = 3,000\) & 342.086 & 81.669 & 574.231 & 269.67 & 63.604 & 266.18 \\
\bottomrule
\end{tabular}
\caption{Root mean squared error for \(\vbeta\) via the empirical distribution, the maximum likelihood, and the general weights methods on a directed cycle with loops, denoted by emp, mle, and gw, respectively.
The graph is on \(n\) vertices, and the covariates between vertices are independent, identically distributed \(\normal_{d}(0, (0.01)^{2}I_{d})\) random variables, redrawn for each simulation. 
For loops, the covariates are just the \((d + 1)^\text{th}\) standard basis vector.
For each simulation, \(k\) vertices are infected.
For each \(n\), \(d\), and \(k\), there are \(500\) simulations of the process.
Compared to the cycle without loops, the error is much larger with loops.
}
\label{tableCycleLoops5075RMSE}
\end{table*}

\begin{table*}[p]
\centering
\ra{1.3}
\begin{tabular}{@{}llllll@{}}\toprule
dim.\ & inf.\ size  & 
\multicolumn{4}{c}{Confidence interval performance, \(n = 50\)} \\
\cmidrule{1-6}
&&
n.e.\ & cov.\ & avg.\ len. & nec.\ len. \\ \midrule
\(d = 1\)
& \(k = 100\)   & 0 & 26 & 23.7 & 196.4  \\
& \(k = 250\)   & 0 & 24 & 11.9 & 106.9  \\
& \(k = 400\)   & 0 & 23 & 7.58 & 77.53  \\
& \(k = 500\)   & 0 & 20 & 6.62 & 66.48  \\
& \(k = 1,000\) & 0 & 18 & 3.89 & 42.53  \\
& \(k = 2,000\) & 0 & 17 & 2.29 & 30.28  \\
& \(k = 3,000\) & 0 & 15 & 1.65 & 24.49  \\
 \midrule
\(d = 2\)
& \(k = 100\)   & 0 & 24 & 36.1 & 481.5  \\
& \(k = 250\)   & 0 & 24 & 14.1 & 133.4  \\
& \(k = 400\)   & 0 & 22 & 9.77 & 98.10  \\
& \(k = 500\)   & 0 & 18 & 8.18 & 82.76  \\
& \(k = 1,000\) & 0 & 13 & 4.70 & 63.60  \\
& \(k = 2,000\) & 0 & 12 & 2.66 & 39.07  \\
& \(k = 3,000\) & 0 & 11 & 1.96 & 28.82  \\
 \midrule
\(d = 5\)
& \(k = 100\)   & 13 & 46 & \(1.77 \times 10^{8}\) & 3929.8 
 \\
& \(k = 250\)   & 6  & 23 & \(1.52 \times 10^{4}\) & 4172.5 
 \\
& \(k = 400\)   & 4  & 20 & \(1.47 \times 10^{3}\) & 3031.2 
 \\
& \(k = 500\)   & 3  & 18 & \(1.49 \times 10^{2}\) & 2429.8 
 \\
& \(k = 1,000\) & 1  & 11 & \(2.55 \times 10^{1}\) & 572.71 
 \\
& \(k = 2,000\) & 0  & 11 & \(7.00 \times 10^{0}\) & 170.86 
 \\
& \(k = 3,000\) & 0  & 9  & \(4.85 \times 10^{0}\) & 111.26 
 \\ 
\bottomrule
\end{tabular}
\caption{
Confidence interval performance on a directed cycle graph with loops for the first coordinate of \(\vbeta\). The columns are n.e.\ for the percent of runs resulting in numerical errors, cov.\ for the percent of runs where the 95\% confidence interval contains \(\vbeta\), avg.\ len.\ for the average confidence interval length, and nec.\ len.\ for the $95^\text{th}$ quantile of the absolute distance of \(\hat{\vbeta}_{k}(1)\) from \(\vbeta(1)\).
The graph is on \(n\) vertices, and the covariates between vertices are independent, identically distributed \(\normal_{d}(0, (0.01)^{2}I_{d})\) random variables, redrawn for each simulation. 
For loops, the covariates are just the \((d + 1)^\text{th}\) standard basis vector.
For each simulation, \(k\) vertices are infected.
For each \(d\) and \(k\), there are \(500\) simulations of the process.}
\label{tableCycleLoops50CI}
\end{table*}

\begin{table*}[p]
\centering
\ra{1.3}
\begin{tabular}{@{}llllll@{}}\toprule
dim.\ & inf.\ size  & 
\multicolumn{4}{c}{Confidence interval performance, \(n = 75\)} \\
\cmidrule{1-6}
&&
n.e.\ & cov.\ & avg.\ len. & nec.\ len. \\ \midrule
\(d = 1\)
& \(k = 100\)   & 0 & 23 & 22.3 & 168.3 \\
& \(k = 250\)   & 0 & 25 & 10.4 & 75.80 \\
& \(k = 400\)   & 0 & 24 & 7.90 & 76.81 \\
& \(k = 500\)   & 0 & 23 & 6.65 & 57.45 \\
& \(k = 1,000\) & 0 & 16 & 3.78 & 49.30 \\
& \(k = 2,000\) & 0 & 15 & 2.23 & 26.66 \\
& \(k = 3,000\) & 0 & 13 & 1.62 & 24.18 \\
 \midrule
\(d = 2\)
& \(k = 100\)   & 0 & 30 & 31.0 & 381.5 \\
& \(k = 250\)   & 0 & 21 & 15.0 & 140.5 \\
& \(k = 400\)   & 0 & 21 & 9.48 & 96.38 \\
& \(k = 500\)   & 0 & 16 & 7.65 & 86.04 \\
& \(k = 1,000\) & 0 & 17 & 4.61 & 59.38 \\
& \(k = 2,000\) & 0 & 13 & 2.61 & 34.58 \\
& \(k = 3,000\) & 0 & 13 & 2.03 & 31.38 \\
 \midrule
\(d = 5\)
& \(k = 100\)   
& 15 & 52 & \(1.59 \times 10^{8}\) & 3858.4 \\
& \(k = 250\)   
& 5  & 25 & \(9.51 \times 10^{6}\) & 4039.8 \\
& \(k = 400\)   
& 3  & 18 & \(3.91 \times 10^{3}\) & 2375.5 \\
& \(k = 500\)   
& 2  & 18 & \(1.94 \times 10^{2}\) & 2609.3 \\
& \(k = 1,000\) 
& 1  & 13 & \(1.87 \times 10^{1}\) & 675.71  \\
& \(k = 2,000\)  
& 0  & 13 & \(6.68 \times 10^{0}\) & 160.47 \\
& \(k = 3,000\) 
& 0  & 10 & \(4.52 \times 10^{0}\) & 92.390 \\ 
\bottomrule
\end{tabular}
\caption{
Confidence interval performance on a directed cycle graph with loops for the first coordinate of \(\vbeta\). The columns are n.e.\ for the percent of runs resulting in numerical errors, cov.\ for the percent of runs where the 95\% confidence interval contains \(\vbeta\), avg.\ len.\ for the average confidence interval length, and nec.\ len.\ for the $95^\text{th}$ quantile of the absolute distance of \(\hat{\vbeta}_{k}(1)\) from \(\vbeta(1)\).
The graph is on \(n\) vertices, and the covariates between vertices are independent, identically distributed \(\normal_{d}(0, (0.01)^{2}I_{d})\) random variables, redrawn for each simulation. 
For loops, the covariates are just the \((d + 1)^\text{th}\) standard basis vector.
For each simulation, \(k\) vertices are infected.
For each \(d\) and \(k\), there are \(500\) simulations of the process.}
\label{tableCycleLoops75CI}
\end{table*}

\begin{table*}[p]
\centering
\ra{1.3}
\begin{tabular}{@{}llllllll@{}}\toprule
dimension & infection size  & 
\multicolumn{6}{c}{average time (s) by method, \(n = 50\) and \(n = 75\)} \\
\cmidrule{1-8}
&&
emp & mle & gw &
emp & mle & gw\\ \midrule
\(d = 1\) 
& \(k = 100\)   & 0.013 & 1.327 & 1.664 & 0.015 & 1.321 & 1.738 \\
& \(k = 250\)   & 0.030 & 4.535 & 5.335 & 0.034 & 4.542 & 5.337 \\
& \(k = 400\)   & 0.047 & 8.703 & 9.938 & 0.053 & 8.861 & 10.31 \\
& \(k = 500\)   & 0.058 & 11.75 & 12.94 & 0.067 & 11.84 & 14.06 \\
& \(k = 1,000\) & 0.122 & 31.38 & 34.89 & 0.133 & 29.91 & 33.66 \\
& \(k = 2,000\) & 0.232 & 74.62 & 86.37 & 0.262 & 74.08 & 90.81 \\
& \(k = 3,000\) & 0.363 & 131.1 & 166.4 & 0.391 & 124.3 & 160.9\\
 \midrule
\(d = 2\) 
& \(k = 100\)   & 0.013 & 1.374 & 1.622 & 0.016 & 1.410 & 1.747 \\
& \(k = 250\)   & 0.030 & 4.641 & 5.334 & 0.034 & 4.605 & 5.619 \\
& \(k = 400\)   & 0.047 & 8.767 & 9.980 & 0.052 & 8.658 & 9.939 \\
& \(k = 500\)   & 0.058 & 11.93 & 13.51 & 0.066 & 11.98 & 13.94 \\
& \(k = 1,000\) & 0.118 & 30.40 & 32.73 & 0.133 & 30.28 & 35.46 \\
& \(k = 2,000\) & 0.232 & 76.52 & 88.53 & 0.256 & 75.75 & 87.97 \\
& \(k = 3,000\) & 0.346 & 131.1 & 157.1 & 0.386 & 129.0 & 162.8 \\
 \midrule
\(d = 5\) 
& \(k = 100\)   & 0.013 & 2.761 & 1.757 & 0.015 & 2.602 & 1.752 \\
& \(k = 250\)   & 0.030 & 8.357 & 5.609 & 0.034 & 8.431 & 5.645 \\
& \(k = 400\)   & 0.047 & 14.32 & 9.819 & 0.053 & 14.15 & 10.50 \\
& \(k = 500\)   & 0.058 & 17.68 & 13.88 & 0.066 & 17.43 & 14.03 \\
& \(k = 1,000\) & 0.120 & 40.37 & 33.82 & 0.132 & 37.70 & 33.78 \\
& \(k = 2,000\) & 0.235 & 87.98 & 86.50 & 0.256 & 86.31 & 85.93 \\
& \(k = 3,000\) & 0.362 & 154.2 & 162.2 & 0.392 & 147.1 & 160.5
 \\
\bottomrule
\end{tabular}
\caption{Average time in seconds for estimating \(\vbeta\) via projecting the empirical distribution, the maximum likelihood estimator, and the general weights methods on a directed cycle, denoted by emp, mle, and gw, respectively.
The graph is on \(n\) vertices, and the covariates between vertices are independent, identically distributed \(\normal_{d}(0, (0.01)^{2}I_{d})\) random variables, redrawn for each simulation. 
For loops, the covariates are just the \((d + 1)^\text{th}\) standard basis vector.
For each simulation, \(k\) vertices are infected.
For each \(n\), \(d\), and \(k\), there are \(500\) simulations of the process.
Compared to the cycle without loops, the computation time seems much higher, particularly for the empirical distribution and general weights estimator.}
\label{tableCycleLoops5075time}
\end{table*}

\clearpage

\section{Ebola results}
\label{appEbolaResults}

In this Appendix, we provide the covariates used in our analysis of the spread of Ebola in West Africa described in Section~\ref{secApplications}.
Tables~\ref{tableEbolaCovariates1} and~\ref{tableEbolaCovariates2} contain descriptions of the covariates used.
All covariate information other than the data on shared borders comes from \cite{dudas2017}.
Finally, Table~\ref{tableEbolaResults} contains the results of the Ebola analysis.

\begin{table*}
\centering
\ra{1.3}
\begin{tabular}{llp{3in}}\toprule
type & covariate  & 
description \\
\midrule
geographic
& source temp.\ & Mean annual temperature of the source region, log-transformed and standardized. \\ 
& dest.\ temp.\ & Mean annual temperature of the destination region, log-transformed and standardized. \\ 
& source temp.\ seas.\ & Temperature seasonality index of the source region, log-transformed and standardized. \\ 
& dest.\ temp.\ seas.\ & Temperature seasonality index of the destination region, log-transformed and standardized. \\ 
& source prec.\ & Mean annual precipitation in the source region, log-transformed and standardized. \\ 
& dest.\ prec.\ & Mean annual precipitation in the destination region, log-transformed and standardized. \\ 
& source prec.\ seas.\ & Precipitation seasonality index of the source region, log-transformed and standardized. \\ 
& dest.\ prec.\ seas.\ & Precipitation seasonality index of the destination region, log-transformed and standardized. \\ 
\midrule
demographic
& gc distance & The great circle distance between population centroids, log-transformed and standardized. \\
& source pop.\ & The source population, log-transformed and standardized. \\ 
& dest.\ pop.\ & The destination population, log-transformed and standardized. \\ 
& source pop.\ density & The source population density, log-transformed and standardized. \\ 
& dest pop.\ density & The destination population density, log-transformed and standardized. \\ 
& source t.t.\ 100k & The estimated average travel time in the source region to the nearest settlement of 100,000 people, log-transformed and standardized. \\
& dest.\ t.t.\ 100k & The estimated average travel time in the destination region to the nearest settlement of 100,000 people, log-transformed and standardized. \\ 
& source econ.\ & Gridded economic output of the source, log-transformed and standardized. \\ 
& dest.\ econ.\ & Gridded economic output of the destination, log-transformed and standardized. \\
\bottomrule
\end{tabular}
\caption{Edge covariates used in the Ebola analysis. All indicators are \(0\) if the condition is not met and \(1\) if the condition is met.}
\label{tableEbolaCovariates1}
\end{table*}

\begin{table*}[p]
\centering
\ra{1.3}
\begin{tabular}{llp{3in}}\toprule
type & covariate  & 
description \\
\midrule
political
& dom.\ border & An indicator as to whether the source and destination regions share a border within the same country. \\
& int.\ border & An indicator as to whether the source and destination regions share a border but are in different countries. \\
& Guinea to Liberia & An indicator for the source being in Guinea and the destination being in Liberia. \\
& Guinea to Sierra Leone & An indicator for the source being in Guinea and the destination being in Sierra Leone. \\
& Liberia to Guinea & An indicator for the source being in Liberia and the destination being in Guinea. \\ 
& Liberia to Sierra Leone & An indicator for the source being in Liberia and the destination being in Sierra Leone. \\
& Sierra Leone to Guinea & An indicator for the source being in Sierra Leone and the destination being in Guinea. \\ 
& Sierra Leone to Liberia & An indicator for the source being in Sierra Leone and the destination being in Liberia. 
\\
\midrule
cultural 
& shared lang.\ dom.\ & An indicator for the source and destination being in the same country and sharing at least one of seventeen languages. \\
& shared lang.\ int.\ & An indicator for the source and destination being in different countries and sharing at least one of seventeen languages. \\
\bottomrule
\end{tabular}
\caption{Edge covariates used in the Ebola analysis. All indicators are \(0\) if the condition is not met and \(1\) if the condition is met.}
\label{tableEbolaCovariates2}
\end{table*}

\begin{table*}[p]
\centering
\ra{1.3}
\begin{tabular}{lSSl}\toprule
covariate & \mcL{coef.} & \mcL{std.\ err.\ } & abs.\ \(t\)-statistic \\
\midrule
gc distance 
& -0.594 & \(4.03 \times 10^{-4}\) & \(1.48 \times 10^{3}\) \\
dest.\ pop.\ 
& 0.749  & \(7.17 \times 10^{-4}\) & \(1.05 \times 10^{3}\) \\
source pop.\ 
& 0.948  & \(9.39 \times 10^{-4}\) & \(1.01 \times 10^{3}\) \\
int.\ border 
& 3.027  & \(3.12 \times 10^{-3}\) & \(9.75 \times 10^{2}\) \\
source t.t.\ 100k 
& 0.691  & \(7.69 \times 10^{-4}\) & \(8.99 \times 10^{2}\) \\
source prec.\ 
& 1.569  & \(2.20 \times 10^{-3}\) & \(7.14 \times 10^{2}\) \\
Sierra Leone to Guinea 
& -2.241 & \(3.15 \times 10^{-3}\) & \(7.12 \times 10^{2}\) \\
Guinea to Sierra Leone 
& -2.490 & \(3.59 \times 10^{-3}\) & \(6.96 \times 10^{2}\) \\
Liberia to Guinea 
& -2.418 & \(3.71 \times 10^{-3}\) & \(6.52 \times 10^{2}\) \\
Sierra Leone to Liberia 
& -3.117 & \(5.14 \times 10^{-3}\) & \(6.06 \times 10^{2}\) \\
Liberia to Sierra Leone 
& -3.866 & \(6.97 \times 10^{-3}\) & \(5.55 \times 10^{2}\) \\
Guinea to Liberia 
& -3.173 & \(5.78 \times 10^{-3}\) & \(5.50 \times 10^{2}\) \\
dest.\ prec.\ 
& 0.746  & \(1.40 \times 10^{-3}\) & \(5.33 \times 10^{2}\) \\
shared lang.\ dom.\ 
& 0.845  & \(1.83 \times 10^{-3}\) & \(4.61 \times 10^{2}\) \\
dest.\ t.t.\ 100k  
& 0.189  & \(7.11 \times 10^{-4}\) & \(2.65 \times 10^{2}\) \\
source pop.\ density 
& 0.312  & \(1.25 \times 10^{-3}\) & \(2.49 \times 10^{2}\) \\
source prec.\ seas.\ 
& -0.381 & \(1.54 \times 10^{-3}\) & \(2.48 \times 10^{2}\) \\
dest.\ temp.\  
& -0.154 & \(7.92 \times 10^{-4}\) & \(1.95 \times 10^{2}\) \\
source temp.\ 
& -0.210 & \(1.18 \times 10^{-4}\) & \(1.79 \times 10^{2}\) \\
source econ.\ 
& 0.085  & \(7.01 \times 10^{-4}\) & \(1.21 \times 10^{2}\) \\
dest.\ pop.\ density 
& 0.117  & \(1.00 \times 10^{-3}\) & \(1.16 \times 10^{2}\) \\
dest.\ prec.\ seas.\ 
& 0.117  & \(1.01 \times 10^{-3}\) & \(1.16 \times 10^{2}\) \\
shared lang.\ int.\ 
& 0.354  & \(3.30 \times 10^{-3}\) & \(1.07 \times 10^{2}\) \\
dest.\ econ.\ 
& 0.032  & \(5.59 \times 10^{-4}\) & \(5.64 \times 10^{1}\) \\
source temp.\ seas.\ 
& -0.093 & \(1.72 \times 10^{-3}\) & \(5.40 \times 10^{1}\) \\
dest.\ temp.\ seas.\ 
& 0.047  & \(1.13 \times 10^{-3}\) & \(4.14 \times 10^{1}\) \\
dom.\ border 
& 0.027  & \(9.74 \times 10^{-4}\) & \(2.77 \times 10^{1}\) \\
\bottomrule
\end{tabular}
\caption{Covariates, coefficients, standard errors, and absolute \(t\)-statistics for the maximum likelihood estimator analysis of the Ebola data, ordered by decreasing absolute \(t\)-statistic. 
The distance between regions and the populations of the regions are the most important, followed by effects related to international borders.}
\label{tableEbolaResults}
\end{table*}

\clearpage

\bibliography{networks}

\end{document}